\documentclass[12pt,a4paper,intlimits,sumlimits]{amsart}

\usepackage{amsmath, amsthm, mathtools}
\usepackage{amsfonts}
\usepackage[alphabetic,nobysame]{amsrefs}
\usepackage{graphicx}
\usepackage{framed}
\usepackage{amssymb}
\usepackage{esvect}
\usepackage{xcolor}
\usepackage{eucal}

\usepackage{hyperref}
\usepackage[english]{babel}

\def \N {\mathbb{N}}
\def \R {\mathbb{R}}

\newcommand{\tr}{\operatorname{tr}}

\newtheorem{theorem}{Theorem}[section]
\newtheorem{proposition}[theorem]{Proposition}
\newtheorem{lemma}[theorem]{Lemma}
\newtheorem{corollary}[theorem]{Corollary}
\newtheorem{openpb}[theorem]{Open Problem}

\theoremstyle{definition}

\numberwithin{equation}{section}

\usepackage{geometry}
\renewcommand{\epsilon}{\varepsilon}

\renewcommand{\le}{\leqslant}

\renewcommand{\ge}{\geqslant}

\allowdisplaybreaks

\title[Kernel representations with constant coefficients]{Kernel representations \\ of fractional gradient bilinear forms.\\
Part I. Constant coefficients}

\author{Serena Dipierro, Caterina Sportelli, and Enrico Valdinoci}

\address{Serena Dipierro: Department of Mathematics and Statistics, The University of Western Australia, 35 Stirling Highway, Crawley, Perth, WA 6009, Australia}
\email{serena.dipierro@uwa.edu.au}

\address{Caterina Sportelli: Departamento de Análisis Matemático, Universidad de Granada, 18071 Granada, Spain  \& Department of Mathematics and Statistics, The University of Western Australia, 35 Stirling Highway, Crawley, Perth, WA 6009, Australia}
\email{caterina.sportelli@uwa.edu.au, caterina.sp@ugr.es}

\address{Enrico Valdinoci: Department of Mathematics and Statistics, The University of Western Australia, 35 Stirling Highway, Crawley, Perth, WA 6009, Australia}
\email{enrico.valdinoci@uwa.edu.au}

 \date{}

\begin{document}

\begin{abstract}
We study the representation of bilinear forms associated with Riesz fractional gradients by symmetric integral kernels. Motivated by \cite[{Open Problem 1.10}]{MR3819528}, we
give a complete description of the constant coefficient case. We prove that any symmetric representing kernel is uniquely determined and we derive its explicit formula in terms of the constant matrix~$A$. This yields a sharp characterization of the matrices for which the kernel is either
nonnegative or uniformly positive. 
In particular, \cite[{Open Problem 1.10}]{MR3819528} has a negative answer (even for constant matrices). We also identify a class of nonconstant antisymmetric divergence free perturbations that are invisible to the fractional bilinear form and hence leave the representing kernel unchanged. The general case of nonconstant matrices exhibits a substantially different behavior and will be the main subject of our forthcoming paper \cite{DSVPART2}.
\end{abstract}

\maketitle

\tableofcontents

\section{Introduction} \subsection{{F}rom fractional gradients to kernel representations}
The recent literature has intensively analyzed the foundational properties of Riesz fractional gradient vectors. Specifically, given~$s\in(0,1)$,
for all~$u$ in the Schwartz space~$\mathcal S(\R^N)$
of smooth and rapidly decreasing functions, one defines\footnote{Throughout the paper, given $u\in\mathcal S(\R^N)$, we denote its Fourier transform by either $\widehat{u}(\xi)$ or $\mathcal F(u)(\xi)$
and adopt the following normalization:
\begin{equation*}
\widehat{u}(\xi):=(2\pi)^{-\frac{N}{2}}\int_{\R^N}u(x) \,e^{-ix\cdot\xi} dx.
\end{equation*}
With this convention, Plancherel's identity reads as
\begin{equation*}
 \int_{\R^N}u(x)\overline{v(x)}\,dx=\int_{\R^N}\widehat{u}(\xi)\overline{\widehat{v}(\xi)}\,d\xi.
\end{equation*}}
the fractional gradient~$D^s u=(D_1^su,\ldots,D_N^su)$ as
\begin{equation}\label{defnfrgradient}
 \widehat{D_j^s u}(\xi)=i\xi_j|\xi|^{s-1}\widehat{u}(\xi),\quad \mbox{ for } j=1,\ldots,N.
\end{equation}
The interest in these objects within fractional calculus and nonlocal equations is driven by their dual vector properties, divergence structures, and robust framework for handling distributions and functions of bounded variation. They also play a pivotal role in establishing Sobolev, Hardy, Trudinger and Morrey inequalities, $\Gamma$-convergence, fractional nonlinear operators and fractional Fredholm alternatives, as highlighted by recent results in~\cite{MR3403430, MR3420498, MR3615452, MR3714833, MR3819528, MR4001075, MR4048032, MR4104831, MR4179861, MR4895172}. 

Additionally, a common methodology in fractional problems is to link nonlocal operators to integral or integrodifferential operators, bridging Fourier analysis and potential theory. A critical step in this approach involves representing the operator via an interaction kernel and identifying its core singular structure.  Nonlocal operators in divergence form and their associated interaction kernels have recently been studied in \cite{MR5007887},  with a focus on existence theory for integrable data and the limit $s\nearrow1$ toward classical operators in divergence form.

To efficiently advance the analysis of fractional equations, it is often essential to establish sharp bounds on this kernel, typically inspired
by the structure exhibited by the fractional Laplacian.

In this spirit, the following foundational question was posed in~\cite[{Open Problem 1.10}]{MR3819528}:

\begin{openpb}\label{opeSPJMDPKM}
Let $s\in(0,1)$. Let $A:\R^N\to\R^{N\times N}$ be bounded and measurable. Assume that~$A$ is uniformly elliptic, namely that there exist constants $0<\lambda_A\le\Lambda_A<+\infty$ such that, for every~$x, \xi\in\R^N$,
\[
\lambda_A |\xi|^2\le A(x)\xi\cdot\xi\le\Lambda_A |\xi|^2.
\]
Can one find a nonnegative kernel $k_A$ such that
\begin{equation}\label{lòkgyftdresdetfyguhijok}
k_A(x,y)=k_A(y,x),
\end{equation}
and, for some $\alpha\in(0,2)$, $\eta>0$, $0<\lambda\le\Lambda<+\infty$, and~$M>0$,
\begin{alignat}{2}\label{srdtfyguioplèòlkoijhugyftr1}
&\lambda\le k_A(x,y)|x-y|^{N+\alpha}\le\Lambda,\quad&&\mbox{ for } |x-y|\le 1,\\ \label{srdtfyguioplèòlkoijhugyftr2}
\mbox{ and }\quad &k_A(x,y)\le M|x-y|^{-N-\eta},\quad&&\mbox{ for } |x-y|>1,
\end{alignat}
such that
\begin{equation}\label{mnbhgvcfdxszxdfcgvhb}
\int_{\R^N} A(x)D^s u(x)\cdot D^s v(x)\,dx = \iint_{\R^{2N}}(u(x)-u(y))(v(x)-v(y))k_A(x,y)\,dx\,dy
\end{equation}
for every~$u,v\in C_c^{\infty}(\R^N)$?
\end{openpb}
We show that the answer to Open Problem \ref{opeSPJMDPKM} is negative in general, both for matrices with variable coefficients (as it will be shown in the forthcoming paper~\cite{DSVPART2}) and, in particular, even for constant matrices~$A$.

More precisely, we characterize the constant matrices for which a kernel satisfying the required bounds exists.

We also refer the interested reader to the very recent work \cite{Carlotti26}, which considers the constant symmetric positive definite case in connection with the Markov property of the associated form and the resulting $L^p$ properties. Our focus is instead on the representation theory of the kernel itself, including its uniqueness, structural properties, and quantitative bounds.

\subsection{Kernel representation theory for constant matrices}
Our main result provides a complete characterization of the constant symmetric matrices for which the bilinear form associated with~$A$ admits a representation through a symmetric kernel satisfying the required ellipticity bounds. 
We first identify the unique kernel representing the bilinear form, and then we derive an explicit formula for it (see Propositions \ref{thmkernelstruct} and \ref{thmmainconst}, respectively). This formula yields a necessary and sufficient condition for the kernel to satisfy the required ellipticity bounds, namely,
\begin{equation}\label{tracecondition}
(N+2s)\lambda_{\min}(A)>\tr(A),
\end{equation}where~$\lambda_{\min}(A)$ is the smallest eigenvalue of the matrix~$A$.
The inequality in~\eqref{tracecondition} allows us to provide a complete characterization of the constant coefficient case of Open Problem~\ref{opeSPJMDPKM}, as stated in the forthcoming Theorem~\ref{corcharactA}.
To state our results, we use the notation
$$\Delta:=\{(x,x):x\in\R^N\}.$$

\begin{theorem}\label{corcharactA}
Let $s\in(0,1)$, $N\ge 1$ and $A\in\R^{N\times N}$ be a constant and symmetric matrix\footnote{
Let $A\in\R^{N\times N}$ be a constant matrix, and denote by
\[
A_{sym}:=\frac{A+A^T}{2},\quad A_{asym}:=\frac{A-A^T}{2}
\]
its symmetric and antisymmetric parts, respectively.

We notice that the bilinear form associated with~$A$ depends only on $A_{sym}$. Indeed, using the Fourier representation of the fractional gradient, we obtain that
\[
\int_{\R^N}A_{asym}D^su(x)\cdot D^sv(x)\,dx = C_{N,s} \int_{\R^N}|\xi|^{2s-2}\left(A_{asym}\xi\cdot\xi\right)
\widehat u(\xi)\widehat v(-\xi)\,d\xi.
\]
Since $A_{asym}^T=-A_{asym}$, we have that $A_{asym}\xi\cdot\xi=0$ for every~$\xi\in\R^N$. Therefore,
\[
\int_{\R^N} A D^su(x)\cdot D^sv(x)\,dx = \int_{\R^N}A_{sym}D^su(x)\cdot D^sv(x)\,dx.
\]
Hence, in the case of constant coefficient, we may assume without loss of generality that~$A$ is symmetric.

In addition, we have that $z\cdot Az=z\cdot A_{sym}z$ and~$\tr(A)=\tr(A_{sym})$,  so that the explicit representing kernel associated with~$A$ is the
same as that associated with $A_{sym}$.}. Let $k_A\in L^1_{loc}(\R^{2N}\setminus\Delta)$ and assume that~\eqref{lòkgyftdresdetfyguhijok} holds.

Then, 
the following statements are equivalent:
\begin{enumerate}
\item[(i)] 
For every~$u,v\in\mathcal S(\R^N)$,
\begin{equation*}
\int_{\R^N}A D^su(x)\cdot D^sv(x)\,dx = \iint_{\R^{2N}} (u(x)-u(y))(v(x)-v(y)) k_A(x, y)\,dx\,dy.
\end{equation*}

\item[(ii)]
For almost every~$x\ne y$,
\[
k_A(x,y) = \frac{c_{N,s}}{4s|x-y|^{N+2s}}\left((N+2s)\frac{(x-y)\cdot A(x-y)}{|x-y|^2}-\tr(A)\right).
\]
\end{enumerate}

Also, if either~$(i)$ or~$(ii)$ holds true, then
\begin{itemize}
\item \eqref{srdtfyguioplèòlkoijhugyftr2} is satisfied for all~$\eta\in(0,2s]$ and a suitable constant $M>0$,
 \item \eqref{srdtfyguioplèòlkoijhugyftr1} holds for some $\alpha\in(0,2)$ and~$0<\lambda\le\Lambda<\infty$,  if and only if \eqref{tracecondition} holds, and in this case necessarily~$\alpha=2s$.
\end{itemize}
\end{theorem}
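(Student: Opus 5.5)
The strategy is to compute the Fourier symbol of the bilinear form and identify it with the symbol of a symmetric kernel of the form $|z|^{-N-2s}\Phi(z/|z|)$, where $z=x-y$. Using Plancherel and \eqref{defnfrgradient}, the left-hand side of (i) equals
\[
\int_{\R^N}|\xi|^{2s-2}(A\xi\cdot\xi)\,\widehat u(\xi)\overline{\widehat v(\xi)}\,d\xi .
\]
For a kernel depending only on $x-y$, say $k_A(x,y)=K(x-y)$ with $K$ even, the right-hand side becomes
\[
2\int_{\R^N}\Big(\int_{\R^N}(1-\cos(\xi\cdot z))K(z)\,dz\Big)\widehat u(\xi)\overline{\widehat v(\xi)}\,d\xi .
\]

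\textbf{Step 1: (ii) $\Rightarrow$ (i).} With $K$ as in (ii) and $c_{N,s}$ the fractional Laplacian constant, so that $\frac{c_{N,s}}{2}\int(1-\cos(\xi\cdot z))|z|^{-N-2s}dz=|\xi|^{2s}$, we need to check that
\[
2\int(1-\cos(\xi\cdot z))K(z)\,dz=|\xi|^{2s-2}A\xi\cdot\xi .
\]
I would do this by writing $(N+2s)\frac{z\cdot Az}{|z|^{N+2s+2}}-\frac{\tr A}{|z|^{N+2s}}$ as $-\frac{1}{2s}\sum_{ij}A_{ij}\partial_{ij}\big(|z|^{-N-2s+2}\big)$ up to a constant. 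The identity $\partial_{ij}|z|^{-m}=m(m+2)z_iz_j|z|^{-m-4}-m\delta_{ij}|z|^{-m-2}$ with $m=N+2s-2$ gives exactly the combination $(N+2s)\,z\cdot Az\,|z|^{-2}-\tr A$, multiplied by $m|z|^{-N-2s}$. Then integrate by parts against $1-\cos(\xi\cdot z)$: the second derivatives fall on the cosine and produce $(A\xi\cdot\xi)\cos(\xi\cdot z)$. The resulting integral $\int\cos(\xi\cdot z)|z|^{-N-2s+2}dz$ is understood as a Fourier transform of a homogeneous distribution and is proportional to $|\xi|^{2s-2}$. The constants should match the normalization in (ii).

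A cleaner way to organize Step 1, which avoids boundary terms, is by homogeneity and symmetry. The map $\xi\mapsto\int(1-\cos(\xi\cdot z))K(z)\,dz$ is $2s$-homogeneous, and it is well defined because $\Phi$ has zero mean on the sphere ($\int_{S^{N-1}}(N+2s)\omega\cdot A\omega-\tr A=2s\,\tr A$, which is not zero). So I would simply rely on the explicit Proposition~\ref{thmmainconst}, which the paper states as giving this formula. The integral converges absolutely since $1-\cos=O(|z|^2)$ near $0$ and $K=O(|z|^{-N-2s})$.

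\textbf{Step 2: (i) $\Rightarrow$ (ii).} Here I would invoke Proposition~\ref{thmkernelstruct}: any symmetric $L^1_{loc}(\R^{2N}\setminus\Delta)$ kernel representing the form is unique. Since the kernel of Step 1 represents the form, it coincides a.e. with $k_A$. If I had to prove uniqueness directly, I would take $u,v$ with disjoint supports. Then (i) gives $-2\iint u(x)v(y)k_A\,dx\,dy=\int AD^su\cdot D^sv$, so $k_A$ is determined as a distribution off the diagonal, hence a.e.

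\textbf{Step 3: bounds.} Set $\Phi(\omega)=(N+2s)\,\omega\cdot A\omega-\tr A$, which is bounded by $(N+2s)\|A\|+|\tr A|$. Then $|k_A|\le M|x-y|^{-N-2s}\le M|x-y|^{-N-\eta}$ for $|x-y|>1$ and $\eta\le 2s$, which gives \eqref{srdtfyguioplèòlkoijhugyftr2}. For \eqref{srdtfyguioplèòlkoijhugyftr1}, note that $\min_\omega\Phi=(N+2s)\lambda_{\min}(A)-\tr A$. If \eqref{tracecondition} holds, the bounds follow with $\alpha=2s$. Conversely, suppose \eqref{srdtfyguioplèòlkoijhugyftr1} holds for some $\alpha$. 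Fix a direction $\omega$ with $\Phi(\omega)\ne0$; one exists because the mean of $\Phi$ is $2s\,\tr A$, and if $\tr A=0$ with $\Phi\equiv0$ then $k_A\equiv0$, which contradicts the lower bound. Along the ray $z=r\omega$, the quantity $k_A\,r^{N+\alpha}=c\,\Phi(\omega)\,r^{\alpha-2s}$ stays between $\lambda$ and $\Lambda$ for $r\in(0,1]$ only if $\alpha=2s$. The lower bound then forces $\Phi>0$ on the sphere away from a null set, so by continuity $\min\Phi>0$, which is \eqref{tracecondition}.

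The main obstacle is Step 1, namely getting the constant $\frac{c_{N,s}}{4s}$ right when matching the Fourier symbol. I would lean on Proposition~\ref{thmmainconst} for this.
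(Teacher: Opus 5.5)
Your proposal is correct and is essentially the paper's proof. As in the paper, (ii)$\Rightarrow$(i) is Proposition~\ref{thmmainconst}, (i)$\Rightarrow$(ii) is the uniqueness argument of Lemma~\ref{lemmauniq}, and the bounds are read off the explicit angular factor; your ray argument on the explicit kernel takes the place of Corollary~\ref{corkernelbounds}, and it is legitimate once you note that the a.e.\ bounds pass to the continuous representative.

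Repair the last inference. ``$\Phi>0$ a.e.\ plus continuity'' only gives $\min\Phi\ge0$, and the matrix $A_s$ of Section~\ref{MS-34or2} is exactly this borderline case. You must keep the quantitative bound $\frac{c_{N,s}}{4s}\Phi\ge\lambda$ a.e., which the lower bound with $\alpha=2s$ does give. Your side remarks are also inaccurate, though harmless since you defer to Proposition~\ref{thmmainconst}: the mean of $\Phi$ is $\frac{2s}{N}\tr(A)$, no zero-mean condition is needed for convergence, and with the paper's constant the normalization is $c_{N,s}\int_{\R^N}(1-\cos(\xi\cdot z))|z|^{-N-2s}\,dz=|\xi|^{2s}$, not $\frac{c_{N,s}}{2}\int_{\R^N}(\cdots)$.
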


We point out that, in the setting of Theorem~\ref{corcharactA}, 
one can pick the optimal exponent~$\eta=2s$
for~\eqref{srdtfyguioplèòlkoijhugyftr2} to be satisfied.

We further show that the representing kernel cannot have finite interaction range, even in the case of nonconstant matrices $A=A(x)$. More precisely, we have the following:

\begin{proposition}\label{propsuppcompatto}
Let $s\in(0,1)$ and $A:\R^N\to\R^{N\times N}$ be a bounded and uniformly elliptic matrix. Suppose that there exists a kernel
\[
k_A\in L^1_{\mathrm{loc}}(\R^{2N}\setminus\Delta)
\]
such that, for some $\alpha\in(0,2)$, $\eta>0$, and $\Lambda,M>0$,
\begin{alignat}{2}
k_A(x,y)&\le \Lambda |x-y|^{-N-\alpha}&&\quad\mbox{ for } |x-y|\le 1, \label{poiuytresdxcvghbnjm}\\
k_A(x,y)&\le M |x-y|^{-N-\eta}&&\quad\mbox{ for } |x-y|>1. \label{cxzsedrftyhujiko098765r4e3wq2a<}
\end{alignat}
Moreover, assume that \eqref{mnbhgvcfdxszxdfcgvhb} holds for every~$u\in C_c^{\infty}(\R^N)$.

Then, for every~$R_0>0$ there exist~$(x,y)\in\R^{2N}$ with $|x-y|>R_0$
such that~$k_A(x,y)\ne0$.
\end{proposition}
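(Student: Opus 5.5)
The plan is a scaling argument applied to the quadratic form (take $u=v$ in \eqref{mnbhgvcfdxszxdfcgvhb}). We argue by contradiction: suppose that for some $R_0>0$ we have $k_A(x,y)=0$ for almost every $(x,y)$ with $|x-y|>R_0$. The idea is that finite range together with the upper bound \eqref{poiuytresdxcvghbnjm} forces the right-hand side to behave, at large scales, like a local Dirichlet energy, which scales like $\lambda^{N-2}$. The left-hand side is a genuine $H^s$ energy, which scales like $\lambda^{N-2s}$. Since $s<1$, these two rates are incompatible.

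\emph{Lower bound for the left-hand side.} Fix $u\in C_c^\infty(\R^N)$ with $u\not\equiv0$, and set $u_\lambda(x):=u(x/\lambda)$ for $\lambda\ge1$. By uniform ellipticity of $A$, Plancherel, and \eqref{defnfrgradient},
\[
\int_{\R^N}A(x)D^su_\lambda\cdot D^su_\lambda\,dx\ \ge\ \lambda_A\int_{\R^N}|D^su_\lambda|^2\,dx=\lambda_A\int_{\R^N}|\xi|^{2s}|\widehat{u_\lambda}(\xi)|^2\,d\xi=\lambda_A\,\lambda^{N-2s}\int_{\R^N}|\xi|^{2s}|\widehat u(\xi)|^2\,d\xi .
\]
The scaling in the last step is the routine change of variables $\widehat{u_\lambda}(\xi)=\lambda^N\widehat u(\lambda\xi)$. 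The final integral is a positive constant $c_u>0$.

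\emph{Upper bound for the right-hand side.} By the finite-range assumption only $|x-y|\le R_0$ contributes. On that set we use $k_A\le \Lambda|x-y|^{-N-\alpha}$ when $|x-y|\le1$, and $k_A\le M$ when $1<|x-y|\le R_0$, the latter coming from \eqref{cxzsedrftyhujiko098765r4e3wq2a<}. The integrand $(u_\lambda(x)-u_\lambda(y))^2$ is nonnegative, so wherever $k_A$ is negative the contribution can only decrease the integral, and replacing $k_A$ by these upper bounds is legitimate. Writing $y=x+h$ and using $\int|u_\lambda(x+h)-u_\lambda(x)|^2dx\le |h|^2\|\nabla u_\lambda\|_{L^2}^2=|h|^2\lambda^{N-2}\|\nabla u\|_{L^2}^2$, we get
\[
\iint_{\R^{2N}}(u_\lambda(x)-u_\lambda(y))^2k_A\,dx\,dy\le \lambda^{N-2}\|\nabla u\|_{L^2}^2\left(\Lambda\int_{|h|\le1}|h|^{2-N-\alpha}\,dh+M\int_{1<|h|\le R_0}|h|^2\,dh\right)=C\,\lambda^{N-2}.
\]
Here $C<\infty$ because $\alpha<2$.

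\emph{Conclusion.} Combining the two bounds through \eqref{mnbhgvcfdxszxdfcgvhb} gives $\lambda_A c_u\lambda^{N-2s}\le C\lambda^{N-2}$ for every $\lambda\ge1$. Since $2s<2$, this fails as $\lambda\to+\infty$, which is the desired contradiction.

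The only delicate point is the sign issue: $k_A$ is not assumed nonnegative. It is handled by the observation that only upper bounds on $k_A$ are needed when the test functions coincide, because then $(u(x)-u(y))^2\ge0$. Note also that the argument does not use the lower bound in \eqref{srdtfyguioplèòlkoijhugyftr1} or the symmetry \eqref{lòkgyftdresdetfyguhijok}. The decay \eqref{cxzsedrftyhujiko098765r4e3wq2a<} is used only to ensure boundedness on the annulus $1<|h|\le R_0$.
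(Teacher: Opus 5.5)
Your proposal is correct and follows essentially the same argument as the paper. The paper also rescales $\phi(x/R)$, bounds the left-hand side below by $\lambda_A R^{N-2s}\|D^s\phi\|_{L^2}^2$ using ellipticity, and uses finite range with the difference-quotient estimate to bound the kernel side above by $C_{R_0}R^{N-2}$. The only differences are cosmetic: you bound $k_A\le M$ on the annulus $1<|h|\le R_0$ instead of keeping $M|h|^{-N-\eta}$, and you assume $k_A$ vanishes only almost everywhere for $|x-y|>R_0$, which gives a slightly stronger conclusion.
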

We will, in fact, provide a more general
and quantitative statement of
this result in~\cite{DSVPART2}.
Here, for the sake of simplicity and completeness, we present a self-contained proof
of Proposition~\ref{propsuppcompatto} in Appendix~\ref{appendixsuppcompatto}.

\subsection{Some results about nonconstant matrices}
We now ask whether the conclusions obtained for constant matrices may also hold for nonconstant matrices $A=A(x)$. This question will be the main topic of our forthcoming paper \cite{DSVPART2}.

In general, the argument based on the Fourier representation used for the case of constant matrices does no longer work. Nevertheless, there exists a nontrivial class of variable perturbations which are ``invisible" to the bilinear form.

More precisely, we consider perturbations of the form
\[
A(x)=A_0+C(x),
\]
where $A_0$ is constant and symmetric, while $C$ is an antisymmetric and divergence free matrix. Here, the divergence free condition\footnote{Interestingly,
divergence free matrices and tensors
appear naturally in a variety of concrete
applications, see e.g.~\cite{MR3813963, MR4449866}.} is meant by column, namely
\begin{equation}\label{òldKMAFJNBHCJSNKM,}
\sum_{i=1}^N \partial_i C_{ij}=0,\quad j=1,\dots,N.
\end{equation}
We show that such a perturbation does not contribute to the bilinear form. Indeed, the form associated with $A(x)$ coincides identically with that associated with $A_0$. As a consequence, not only the representation formula, but also the representing kernel and $m_A(\theta)$ are exactly the same as in the constant coefficient case.
\begin{theorem}\label{invisiblepert}
Let $s\in(0,1)$ and~$N\ge 1$. Let $A_0\in\R^{N\times N}$ be a constant and symmetric matrix. Assume that $C\in C_c^{\infty}(\R^N, \R^{N\times N})$ satisfies
\begin{equation}\label{lpèkoijhugyftdrxcfv}
C(x)^T=-C(x)\quad\mbox{ for every }x\in\R^N,
\end{equation}
and~\eqref{òldKMAFJNBHCJSNKM,} holds.

Let $A:\R^N\to\R^{N\times N}$ be defined as
\[
A(x):=A_0+C(x).
\]
Then, for every~$u,v\in\mathcal S(\R^N)$,
\begin{equation}\label{òè+poiuhygftdrszedrftychvghib}
\int_{\R^N} A(x)D^su(x)\cdot D^sv(x)\,dx = \int_{\R^N} A_0 D^su(x)\cdot D^sv(x)\,dx.
\end{equation}
In particular, the kernel representing the bilinear form associated with~$A(x)$ is the same as the one associated with $A_0$, namely
\[
k_A(x,y)=k_{A_0}(x,y)\quad\mbox{ for a.e.  }x\ne y,
\]
where
\[
k_{A_0}(x,y) = \frac{c_{N,s}}{4s|x-y|^{N+2s}}\left((N+2s)\frac{(x-y)\cdot A_0(x-y)}{|x-y|^2}-\tr(A_0)\right).
\]
\end{theorem}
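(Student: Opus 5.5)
By linearity, proving \eqref{òè+poiuhygftdrszedrftychvghib} amounts to showing that
\[
\int_{\R^N} C(x)D^su(x)\cdot D^sv(x)\,dx=0 \qquad\mbox{for all } u,v\in\mathcal S(\R^N).
\]
Once this is known, the bilinear form of $A$ equals that of $A_0$. Theorem~\ref{corcharactA} then shows that a symmetric kernel $k_A$ represents the form of $A$ if and only if it equals the explicit kernel $k_{A_0}$ a.e.; this is the uniqueness direction (i)$\Rightarrow$(ii) applied to $A_0$. So the statement about kernels is immediate, and the whole proof reduces to the vanishing identity above.

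To prove that identity, I would use the fact that $D^s$ is a gradient of a potential. Set $w:=I_{1-s}u=(-\Delta)^{-(1-s)/2}u$, so that $\widehat w(\xi)=|\xi|^{s-1}\widehat u(\xi)$. Then \eqref{defnfrgradient} gives $D^su=\nabla w$, and similarly $D^sv=\nabla z$ with $z:=I_{1-s}v$. These potentials are smooth, and every derivative of order $\ge 1$ decays like $|x|^{-N-s}$ or faster. Also $w$ itself is bounded and decays like $|x|^{-N+1-s}$, which is enough here because $C$ has compact support.

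Writing the integrand in coordinates,
\[
\int C\nabla w\cdot\nabla z\,dx=\sum_{i,j}\int C_{ij}\,\partial_j w\,\partial_i z\,dx .
\]
Integrating by parts in $x_i$ produces
\[
-\sum_{i,j}\int \big(\partial_iC_{ij}\,\partial_jw + C_{ij}\,\partial_{ij}w\big)\,z\,dx .
\]
The first term vanishes because $C$ is divergence free by columns, \eqref{òldKMAFJNBHCJSNKM,}. The second term vanishes because $C_{ij}$ is antisymmetric by \eqref{lpèkoijhugyftdrxcfv} while $\partial_{ij}w$ is symmetric in $i,j$. There are no boundary terms, since $C\in C_c^\infty$.

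The only point needing care, which I expect to be the main obstacle, is justifying regularity: the integrals are finite and $w,z$ are $C^2$. I would handle this through the Fourier side. For $j\ge1$, $\widehat{\partial_jw}=i\xi_j|\xi|^{s-1}\widehat u$ and $\widehat{\partial_{ij}w}$ lie in $L^1$, so these functions are bounded and continuous. For $\widehat w=|\xi|^{s-1}\widehat u$ itself, local integrability holds when $N\ge1$ and $s>0$, since $|\xi|^{s-1}$ is integrable near $0$ in every dimension because $s-1>-N$. Because $C$ has compact support, only local boundedness is actually needed, and all integrations by parts take place on a bounded set.

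Alternatively, I could argue entirely in Fourier variables: write $\widehat{C_{ij}\partial_j w}$ as a convolution and exploit $\eta_iC_{ij}$-type cancellations. The physical-space argument seems cleaner, so that is the one I would use.
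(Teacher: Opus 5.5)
Your argument is correct, but it reaches the key identity by a different route from the paper. The paper never introduces the potential $I_{1-s}u$. It stays on the Fourier side and writes $\widehat{C_{jk}D^s_ku}$ as a convolution. It then reduces the integrand of $\int C D^su\cdot D^sv\,dx$ to $|a|^{s-1}|\zeta|^{s-1}\,\zeta\cdot\widehat C(\eta)a$ with $a=\zeta-\eta$, and shows this vanishes pointwise because $a\cdot\widehat C(\eta)a=0$ (antisymmetry) and $\eta^T\widehat C(\eta)=0$ (the Fourier transform of the column divergence condition). This is essentially the alternative you sketch in your last paragraph, and it is the exact Fourier dual of your two cancellations ($C_{ij}\partial_{ij}w$ and $\partial_iC_{ij}\,\partial_jw$, respectively).

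Your physical-space version makes the mechanism transparent. Since $D^su=\nabla I_{1-s}u$, the claim is just the classical fact that $\operatorname{div}(C\nabla w)=0$ for antisymmetric, divergence free $C$. Hence $\int C\nabla w\cdot\nabla z\,dx=0$ for any $w\in C^2$, $z\in C^1$, and the fractional structure enters only through $D^s$ being a gradient. Your route also needs less of $C$: it only uses $C\in C^1_c$, whereas the paper needs $C\in C^\infty_c$ so that $\widehat C\in\mathcal S(\R^N)$ justifies Fubini. It also covers $N=1$ without a separate case.

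The price is the regularity of $w=I_{1-s}u$, which you settle correctly. For every multi-index $\beta$, $\xi^\beta|\xi|^{s-1}\widehat u\in L^1(\R^N)$ because $s-1>-1\ge -N$. So $w\in C^\infty$ with bounded derivatives and $\partial_jw=D^s_ju$ pointwise. The decay rates you quote are true but unnecessary, since $C$ has compact support.

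The kernel identification then proceeds exactly as in the paper, via Proposition~\ref{thmmainconst} together with uniqueness, i.e.\ (i)$\Rightarrow$(ii) of Theorem~\ref{corcharactA} applied to $A_0$.
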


\subsection{Examples and counterexamples}
For constant symmetric matrices $A$,  inequality \eqref{tracecondition} is precisely the condition which guarantees a positive uniform lower bound for the kernel, as requested in~\eqref{srdtfyguioplèòlkoijhugyftr1}. 
This is made explicit through the following example.
Let $a\ge 0$ and consider the family of matrices
\[
A_a:=
\begin{pmatrix}
1+\frac{2a}{3} & -\frac{a}{3} & -\frac{a}{3}\\[5pt]
-\frac{a}{3} & 1+\frac{2a}{3} & -\frac{a}{3}\\[5pt]
-\frac{a}{3} & -\frac{a}{3} & 1+\frac{2a}{3}
\end{pmatrix}.
\]
In Section \ref{MS-34or2}, we will prove that \eqref{tracecondition} is verified for $a<s$. Then, we consider two examples, corresponding to $a:=s/2$ and $a:=s$, respectively, which exhibit a difference in the behavior of the kernel. Indeed, in the first case, the kernel admits a uniform positive lower bound
in~\eqref{srdtfyguioplèòlkoijhugyftr1}
and fulfills all the properties listed in Open Problem~\ref{opeSPJMDPKM},
whereas in the second case the kernel becomes negative. These examples will be analyzed in detail in Section \ref{MS-34or2}.
An additional counterexample, for which we thank Daniel E. Spector,  is provided in Appendix~\ref{appDanielExample}.

\subsection{Organization of the paper}
The rest of the paper is organized as follows. Section~\ref{PRE} collects preliminary results connecting the theory of fractional gradients with that of integral kernels. These results lead to Propositions~\ref{thmkernelstruct} and~\ref{thmmainconst}, which are then used to prove Theorem~\ref{corcharactA} in Section~\ref{lpkoijuhgyftdresdtfyguhijhb213}.

The case of nonconstant matrices is presented in Section~\ref{MS-34or}, and a number of specific examples are discussed in Section~\ref{MS-34or2}.
The article ends with some appendices collecting ancillary results
to be used in the proofs of the main theorems.

\section{Preliminary results towards the proof of Theorem \ref{corcharactA}}\label{PRE}

The proof of Theorem~\ref{corcharactA} requires several preliminary steps, which are developed in this section.

We begin by identifying the structural constraints imposed on any symmetric kernel representing the bilinear form associated with a constant matrix~$A$. Indeed, our first result shows that the translation invariance and scaling properties force every such kernel to be homogeneous of order $-N-2s$ and to depend on $x$ and~$y$ only through the direction of $x-y$, as stated in the following result:
\begin{proposition}\label{thmkernelstruct}
Let $s\in(0,1)$, $N\ge 1$ and $A\in\R^{N\times N}$ be a constant and symmetric matrix. Assume that there exists a kernel
\[
k_A\in L^1_{\mathrm{loc}}(\R^{2N}\setminus\Delta)
\]
satisfying \eqref{lòkgyftdresdetfyguhijok}. Assume that \eqref{mnbhgvcfdxszxdfcgvhb} holds for every~$u,v\in\mathcal S(\R^N)$.

Then, there exists a unique
function $m_A\in L^1(\mathbb S^{N-1})$ such that $m_A(\theta)=m_A(-\theta)$, and
\begin{equation}\label{kernelconmATBD}
k_A(x,y)=\frac{1}{|x-y|^{N+2s}}m_A\left(\frac{x-y}{|x-y|}\right)\quad\mbox{ for a.e. } (x, y)\in \R^{2N}\setminus\Delta.
\end{equation}
\end{proposition}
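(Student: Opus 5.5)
Our plan is to show that the bilinear form on the right of \eqref{mnbhgvcfdxszxdfcgvhb} determines the symmetric kernel uniquely, and then to use the translation and dilation invariance of the left-hand side. We start with uniqueness. Suppose $k_1,k_2$ are symmetric, lie in $L^1_{\mathrm{loc}}(\R^{2N}\setminus\Delta)$, and both represent the same form. Put $k:=k_1-k_2$. Then
\[
\iint_{\R^{2N}}(u(x)-u(y))(v(x)-v(y))\,k(x,y)\,dx\,dy=0
\qquad\mbox{for all } u,v\in\mathcal S(\R^N).
\]
Choose $u,v\in C_c^\infty$ with disjoint supports. Expanding the product, the terms $u(x)v(x)$ and $u(y)v(y)$ vanish, and symmetry of $k$ gives
\[
\iint u(x)v(y)\,k(x,y)\,dx\,dy=0 .
\]
Functions of the form $u\otimes v$ with disjoint supports span a set that is dense in $C_c(\R^{2N}\setminus\Delta)$ (via Stone--Weierstrass on products of small cubes away from the diagonal). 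Hence $k=0$ almost everywhere. We expect this to be the main technical point: we must check that local integrability away from $\Delta$ is enough to pass to the limit. This works because any compact $K\subset\R^{2N}\setminus\Delta$ is covered by finitely many products $Q\times Q'$ of disjoint cubes.

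Next we exploit invariance. For $h\in\R^N$ and $r>0$, set $\tau_hu(x)=u(x+h)$ and $\delta_ru(x)=u(rx)$. The fractional gradient commutes with translations and satisfies $D^s(\delta_r u)=r^s(\delta_r D^su)$, which is immediate from \eqref{defnfrgradient}. Since $A$ is constant, the left-hand side of \eqref{mnbhgvcfdxszxdfcgvhb} is therefore invariant under $u,v\mapsto\tau_hu,\tau_hv$. Under $u,v\mapsto\delta_ru,\delta_rv$ it gets multiplied by $r^{2s-N}$. Changing variables on the right-hand side then shows that both
\[
k(x+h,y+h)\qquad\mbox{and}\qquad r^{N+2s}\,k(rx,ry)
\]
are symmetric, locally integrable kernels representing the same form. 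By the uniqueness step, $k(x+h,y+h)=k(x,y)$ and $k(rx,ry)=r^{-N-2s}k(x,y)$ almost everywhere, for each fixed $h$ and $r$.

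To turn these "for each $h$, a.e." statements into a function of $x-y$, we use Fubini-type arguments. In the coordinates $(z,w)=(x-y,y)$, invariance under every translation $h$ means $g(z,w):=k(w+z,w)$ satisfies $g(z,w+h)=g(z,w)$ a.e. for each $h$. A standard argument, such as mollifying in $w$ or averaging over $h$ and applying Fubini, then gives $g(z,w)=\kappa(z)$ for a.e. $(z,w)$, with $\kappa\in L^1_{\mathrm{loc}}(\R^N\setminus\{0\})$ and $\kappa(-z)=\kappa(z)$ by symmetry of $k$.

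In the same way, homogeneity for each $r$ and polar coordinates $z=\rho\theta$ give $\rho^{N+2s}\kappa(\rho\theta)=m(\theta)$ for a.e. $(\rho,\theta)$. Here the multiplicative group $(0,\infty)$ plays the role that translations played before. Finally, $m\in L^1(\mathbb S^{N-1})$ follows by integrating $|\kappa|$ over the annulus $1<|z|<2$, where $\kappa$ is integrable, and $m$ is even because $\kappa$ is. Uniqueness of $m_A$ follows from uniqueness of $k_A$ together with \eqref{kernelconmATBD}.
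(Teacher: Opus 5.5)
Your proposal is correct and follows essentially the same route as the paper. The paper also proves uniqueness of the symmetric representing kernel by testing with disjointly supported $u,v$ (Lemma~\ref{lemmauniq}), then combines translation and dilation invariance of the form with that uniqueness (Lemmata~\ref{lemmatranslation} and~\ref{lemmahomogn}), and finally uses Fubini-type arguments and polar coordinates (Lemma~\ref{lemmatecnico2.2}) to obtain an even $m_A\in L^1(\mathbb S^{N-1})$; the only difference is that it gets $k_1=k_2$ on each product $U\times V$ from H\"ormander's uniqueness of tensor-product distributions, where you use Stone--Weierstrass density, which works equally well.
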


Notice that Proposition~\ref{thmkernelstruct} determines the homogeneity of any representing kernel.  Therefore,  we need to compare its representation with~\eqref{srdtfyguioplèòlkoijhugyftr1} and~\eqref{srdtfyguioplèòlkoijhugyftr2}.
This yields necessary restrictions on the exponents in~\eqref{srdtfyguioplèòlkoijhugyftr1} and~\eqref{srdtfyguioplèòlkoijhugyftr2}, and on the function $m_A(\theta)$ provided by Proposition~\ref{thmkernelstruct}, as stated in the next result.
\begin{corollary}\label{corkernelbounds}
Let $s\in(0,1)$, $N\ge 1$ and $A\in\R^{N\times N}$ be a constant and symmetric matrix. Assume that there exists a kernel
\[
k_A\in L^1_{loc}(\R^{2N}\setminus\Delta)
\] satisfying
\eqref{lòkgyftdresdetfyguhijok}. Assume that \eqref{mnbhgvcfdxszxdfcgvhb} holds for every~$u,v\in\mathcal S(\R^N)$.

In addition,  suppose that there exist
\[
\alpha\in(0,2),\quad\eta>0,\quad 0<\lambda\le\Lambda<+\infty,\quad M>0,
\]
such that~\eqref{srdtfyguioplèòlkoijhugyftr1} and~\eqref{srdtfyguioplèòlkoijhugyftr2} are verified for almost every~$x\ne y$.

Then, 
\[
\alpha=2s,\quad 0<\eta\le2s, \quad\mbox{ and }\quad \lambda\le m_A(\theta)\le\min\{\Lambda,M\} \ \ \mbox{for a.e. }\theta\in\mathbb S^{N-1},
\]
where $m_A\in L^1(\mathbb S^{N-1})$ is the function provided by Proposition~\ref{thmkernelstruct}. 
\end{corollary}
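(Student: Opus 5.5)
We would start from Proposition~\ref{thmkernelstruct}, which gives $k_A(x,y)=|x-y|^{-N-2s}m_A\bigl(\frac{x-y}{|x-y|}\bigr)$ for a.e. $(x,y)\in\R^{2N}\setminus\Delta$, with $m_A\in L^1(\mathbb S^{N-1})$ even. We then change variables to $z=x-y$ and use polar coordinates $z=r\theta$. For a.e. $x$, and a.e. $r>0$, $\theta\in\mathbb S^{N-1}$, the two-sided bound \eqref{srdtfyguioplèòlkoijhugyftr1} reads
\[
\lambda\le r^{\alpha-2s}\,m_A(\theta)\le\Lambda\qquad\mbox{for } 0<r\le1.
\]
The bound \eqref{srdtfyguioplèòlkoijhugyftr2} reads
\[
r^{-N-2s}m_A(\theta)\le M r^{-N-\eta},\quad\mbox{that is}\quad m_A(\theta)\le M r^{2s-\eta}\qquad\mbox{for } r>1.
\]
A Fubini argument makes these hold on a full-measure set of pairs $(r,\theta)$. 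In particular, for a.e. $\theta$ they hold for a.e. $r$ in the relevant range.

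First we determine $\alpha$. Fix $\theta$ in the full-measure set. The lower bound gives $m_A(\theta)\ge\lambda r^{2s-\alpha}$ for a.e. $r\in(0,1]$, and the upper bound gives $m_A(\theta)\le\Lambda r^{2s-\alpha}$.
\begin{itemize}
\item If $\alpha<2s$, then letting $r\to0^+$ along admissible values forces $m_A(\theta)\le\Lambda r^{2s-\alpha}\to0$. This contradicts $m_A(\theta)\ge\lambda r^{2s-\alpha}\ge\lambda>0$ for $r\le1$.
\item If $\alpha>2s$, then $m_A(\theta)\ge\lambda r^{2s-\alpha}\to+\infty$ as $r\to0^+$, which is impossible since $m_A(\theta)$ is a fixed finite number for a.e. $\theta$.
\end{itemize}
Hence $\alpha=2s$, and then $\lambda\le m_A(\theta)\le\Lambda$ for a.e. $\theta$.

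Next we bound $\eta$. If $\eta>2s$, then $m_A(\theta)\le Mr^{2s-\eta}\to0$ as $r\to+\infty$, so $m_A(\theta)\le0$. This contradicts $m_A\ge\lambda>0$, so $0<\eta\le2s$. Finally, taking $r\searrow1$ (admissible $r>1$) in $m_A(\theta)\le Mr^{2s-\eta}$ gives $m_A(\theta)\le M$. Combined with the previous step, $m_A\le\min\{\Lambda,M\}$ a.e.

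The only delicate point is the measure-theoretic bookkeeping: the hypotheses hold only almost everywhere, so we must pass from a.e. $(x,y)$ to a.e. $\theta$ with a.e. $r$. Translation invariance in $x$ and Fubini in polar coordinates handle this. The limits $r\to0$, $r\to\infty$ and $r\to1^+$ are then taken along sequences of admissible $r$, which exist because those sets have full measure in each interval.
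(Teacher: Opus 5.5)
Your proof is correct and follows essentially the same route as the paper's. You reduce via Proposition~\ref{thmkernelstruct} to $\lambda\le r^{\alpha-2s}m_A(\theta)\le\Lambda$ on $(0,1]$ and $m_A(\theta)\le Mr^{2s-\eta}$ on $(1,+\infty)$, then pass to limits along admissible sequences $r\to0^+$, $r\to+\infty$, $r\to1^+$; your Fubini bookkeeping, which yields a common full-measure set of $\theta$, is slightly more careful than the paper's. One harmless slip: in the case $\alpha<2s$ the chain $\lambda r^{2s-\alpha}\ge\lambda$ is false for $r\le1$, but the contradiction only needs $m_A(\theta)\ge\lambda r^{2s-\alpha}>0$ at a single admissible $r$, which the lower bound gives.
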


We point out that Proposition~\ref{thmkernelstruct} and Corollary~\ref{corkernelbounds} are purely structural results, as they describe the form that a representing kernel must have. However, they do not identify the function $m_A(\theta)$ in terms of the matrix~$A$. 

As a further step, we compute explicitly the kernel $k_A(x, y)$. For constant symmetric matrices, the resulting formula gives a kernel of the form anticipated by Proposition~\ref{thmkernelstruct} and represents the bilinear form exactly.
\begin{proposition}\label{thmmainconst}
Let $s\in(0,1)$, $N\ge 1$ and $A\in\R^{N\times N}$ be a constant and symmetric matrix.  Let $k_A:\R^{2N}\setminus\Delta\to\R$ be defined as
\[
k_A(x,y):=\frac{c_{N, s}}{4s|x-y|^{N+2s}}\left((N+2s)\frac{(x-y)\cdot A(x-y)}{|x-y|^2}-\tr(A)\right).
\]
Then,  $k_A$ satisfies~\eqref{lòkgyftdresdetfyguhijok}, and, for any $u,v\in\mathcal S(\R^N)$, the identity in~\eqref{mnbhgvcfdxszxdfcgvhb} holds true.
\end{proposition}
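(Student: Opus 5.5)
The plan is to prove the identity~\eqref{mnbhgvcfdxszxdfcgvhb} on the Fourier side, by computing the Fourier multiplier of the translation-invariant kernel and checking that it equals $|\xi|^{2s-2}A\xi\cdot\xi$. Symmetry~\eqref{lòkgyftdresdetfyguhijok} is immediate, since $k_A(x,y)=K(x-y)$ with
\[
K(z):=\frac{c_{N,s}}{4s}\left(\frac{(N+2s)\,z\cdot Az}{|z|^{N+2s+2}}-\frac{\tr(A)}{|z|^{N+2s}}\right),
\]
and $K$ is even. I take $c_{N,s}$ to be the usual fractional Laplacian constant, so that
\[
c_{N,s}\int_{\R^N}\frac{1-\cos(\xi\cdot z)}{|z|^{N+2s}}\,dz=|\xi|^{2s}.
\]

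\textbf{Step 1: reduce the right-hand side to a multiplier.} First, $|K(z)|\le C|z|^{-N-2s}$, and for $u,v\in\mathcal S(\R^N)$ the product $|u(x)-u(x+z)|\,|v(x)-v(x+z)|$ is $O(|z|^2)$ near $0$ and bounded at infinity, uniformly in $x$ after integration against $dx$. Hence the right-hand side of~\eqref{mnbhgvcfdxszxdfcgvhb} is absolutely convergent, and Fubini applies after the change of variables $y=x+z$. Expanding, using evenness of $K$ and Plancherel (regularizing by cutting out $|z|<\epsilon$ and letting $\epsilon\to0$), I expect to obtain
\[
\iint_{\R^{2N}}(u(x)-u(y))(v(x)-v(y))K(x-y)\,dx\,dy=\int_{\R^N}\Psi(\xi)\,\widehat u(\xi)\overline{\widehat v(\xi)}\,d\xi,
\]
where
\[
\Psi(\xi):=2\int_{\R^N}(1-\cos(\xi\cdot z))K(z)\,dz,
\]
and the integral defining $\Psi$ converges absolutely even though $K$ changes sign. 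On the other hand, by~\eqref{defnfrgradient} the left-hand side equals
\[
\int_{\R^N}|\xi|^{2s-2}(A\xi\cdot\xi)\,\widehat u(\xi)\overline{\widehat v(\xi)}\,d\xi.
\]
So it suffices to show that $\Psi(\xi)=|\xi|^{2s-2}A\xi\cdot\xi$.

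\textbf{Step 2: compute $\Psi$ via a rotation-invariant tensor.} Define
\[
T_{ij}(\xi):=2\int_{\R^N}(1-\cos(\xi\cdot z))\,\frac{z_iz_j}{|z|^{N+2s+2}}\,dz,
\]
which is absolutely convergent. Then $\Psi(\xi)=\frac{c_{N,s}}{4s}\big((N+2s)\,A:T(\xi)-\tr(A)\,\tr T(\xi)\big)$.

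By rotation covariance and homogeneity of degree $2s$, $T(\xi)=a|\xi|^{2s-2}\xi\otimes\xi+b|\xi|^{2s}\mathrm{Id}$. Taking the trace gives $a+Nb=t_0:=2/c_{N,s}$. Contracting with $e\otimes e$ for a unit vector $e$ gives $a+b=t_0\,\rho$, where
\[
\rho=\frac{\int_{\mathbb S^{N-1}}|\theta_1|^{2s+2}\,d\theta}{\int_{\mathbb S^{N-1}}|\theta_1|^{2s}\,d\theta}.
\]
Here polar coordinates reduce the radial integral to the common factor $\int_0^\infty(1-\cos(r\tau))r^{-1-2s}\,dr=C_s|\tau|^{2s}$, which cancels in the ratio.

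\textbf{Step 3 (the main computational point).} I need $\rho=\frac{2s+1}{N+2s}$. I would get this from Beta-function expressions for $\int_{\mathbb S^{N-1}}|\theta_1|^{p}\,d\theta$, or more cleanly by integrating $|x_1|^{2s}e^{-|x|^2}$ and $|x_1|^{2s+2}e^{-|x|^2}$ in polar coordinates and comparing the Gamma factors $\Gamma(\frac{2s+3}{2})/\Gamma(\frac{2s+1}{2})=\frac{2s+1}{2}$ and $\Gamma(\frac{N+2s+2}{2})/\Gamma(\frac{N+2s}{2})=\frac{N+2s}{2}$.

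This yields $b=\frac{t_0}{N+2s}$ and $a=\frac{2s\,t_0}{N+2s}$ when $N\ge2$. Substituting, the $\tr(A)$ terms cancel and
\[
\Psi(\xi)=\frac{c_{N,s}}{4s}\,2s\,t_0\,|\xi|^{2s-2}A\xi\cdot\xi=|\xi|^{2s-2}A\xi\cdot\xi,
\]
as required.

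The case $N=1$ is separate but trivial: there $\xi\otimes\xi$ and $\mathrm{Id}$ coincide, and directly $K=\frac{c_{1,s}}{4s}\,2s\,A\,|z|^{-1-2s}$, which gives the fractional Laplacian form. The only delicate points are the Fubini/principal-value justification in Step 1, which is handled by the $O(|z|^2)$ cancellation, and getting the constant $\rho$ right in Step 3.
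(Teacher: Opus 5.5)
Your proposal is correct and follows essentially the paper's route: compute the Fourier multiplier of the translation-invariant kernel, reduce it to the spherical moment $\int_{\mathbb S^{N-1}}\theta\otimes\theta\,|\omega\cdot\theta|^{2s}\,d\theta$, use symmetry to get the form $\alpha I+\beta\,\omega\otimes\omega$, and fix the constants from the trace and from $\int_{\mathbb S^{N-1}}|\theta_1|^{2s+2}\,d\theta=\frac{2s+1}{N+2s}\int_{\mathbb S^{N-1}}|\theta_1|^{2s}\,d\theta$. The differences are cosmetic: you phrase the symmetry step as invariance of a tensor (this needs all of $O(N)$, including reflections, when $N=2$) and evaluate the moment ratio with Gamma functions, whereas the paper uses explicit coordinate reflections and permutations followed by a rotation, and an integration by parts.
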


We now collect some preliminary results that will be used in the proofs of Propositions~\ref{thmkernelstruct} and~\ref{thmmainconst}, which will
be presented, respectively, on pages~\pageref{PALKMND:01p}
and~\pageref{PALKMND:02p}.

\subsection{Fourier representation of the fractional gradient bilinear form}
Let $A\in\R^{N\times N}$ be a constant matrix. For $u,v\in\mathcal S(\R^N)$, we set
\begin{equation}\label{BAdef1}
 \mathcal{B}_A(u,v):=\int_{\R^N} A D^su(x)\cdot D^sv(x)\,dx.
\end{equation}
To start with, we derive the Fourier representation of $\mathcal{B}_A$.
\begin{proposition}\label{BAsymbol}
Let $A\in\R^{N\times N}$ be a constant matrix and let $\mathcal{B}_A$ defined as in~\eqref{BAdef1}. 

Then,
\begin{equation}\label{BAfourier}
 \mathcal{B}_A(u,v)=\int_{\R^N}p_A(\xi)\widehat{u}(\xi)\overline{\widehat{v}(\xi)}\,d\xi,
\end{equation}
where, for any $\xi\in\R^N$, 
\begin{equation}\label{ncewuri4y3itiyuwhdwuiefdefu5i654}
 p_A(\xi):=|\xi|^{2s-2}\,\xi\cdot A\xi.
\end{equation}
\end{proposition}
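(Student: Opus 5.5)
The argument is a direct computation on the Fourier side, based on Plancherel's identity and the definition~\eqref{defnfrgradient}. Since $u,v\in\mathcal S(\R^N)$, each component $D^s_j u$ has Fourier transform $i\xi_j|\xi|^{s-1}\widehat u(\xi)$. This function is bounded by $|\xi|^{s}|\widehat u(\xi)|$, so it lies in $L^2(\R^N)$; indeed it is even in $L^1$, because $|\xi|^s$ is locally bounded and $\widehat u$ decays rapidly. Hence $D^s_j u\in L^2(\R^N)$, and the integral in~\eqref{BAdef1} is well defined and absolutely convergent by Cauchy--Schwarz. All the functions involved are real valued: the multiplier $i\xi_j|\xi|^{s-1}$ is odd and purely imaginary, so it maps real functions to real functions. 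We may therefore write $D^s_k v=\overline{D^s_k v}$ and apply the complex form of Plancherel's identity stated in the footnote.

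First I expand the integrand by bilinearity:
\[
\mathcal B_A(u,v)=\sum_{j,k=1}^N A_{jk}\int_{\R^N} D^s_k u(x)\,\overline{D^s_j v(x)}\,dx ,
\]
where I use $(AD^su)\cdot D^sv=\sum_{j,k}A_{jk}D^s_ku\,D^s_jv$. For each pair $(j,k)$ I apply Plancherel:
\[
\int_{\R^N} D^s_k u\,\overline{D^s_j v}\,dx=\int_{\R^N}\big(i\xi_k|\xi|^{s-1}\widehat u(\xi)\big)\,\overline{\big(i\xi_j|\xi|^{s-1}\widehat v(\xi)\big)}\,d\xi .
\]
Since $\xi$ is real, $i\cdot\overline{i}=1$, so the right-hand side equals $\int \xi_j\xi_k|\xi|^{2s-2}\,\widehat u\,\overline{\widehat v}\,d\xi$.

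Summing over $j,k$ and using $\sum_{j,k}A_{jk}\xi_j\xi_k=\xi\cdot A\xi$ gives~\eqref{BAfourier} with $p_A$ as in~\eqref{ncewuri4y3itiyuwhdwuiefdefu5i654}. This step requires interchanging the finite sum with the integral, which is trivially allowed. We also need $p_A\,\widehat u\,\overline{\widehat v}$ to be integrable. This holds because $|p_A(\xi)|\le \|A\|\,|\xi|^{2s}$ and the Fourier transforms are Schwartz functions. At the origin, the factor $|\xi|^{2s-2}$ is compensated by $\xi\cdot A\xi$, so $p_A$ is continuous there (with $p_A(0)=0$). Consequently no singularity issue arises.

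There is no genuine obstacle here. The only point needing care is justifying that $D^s u$ is real and in $L^2$, so that Plancherel applies to the products in the given normalization. A related bookkeeping point is keeping track of the conjugation convention, so that the factor $i\cdot\overline{i}$ indeed equals $+1$. The same computation also recovers the footnote's observation that only $A_{sym}$ contributes, since $\xi\cdot A\xi=\xi\cdot A_{sym}\xi$.
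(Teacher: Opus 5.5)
Your proof is correct and follows essentially the same route as the paper: expand $\mathcal B_A$ by bilinearity, insert the complex conjugate using that $D^s_j v$ is real, apply Plancherel componentwise with the multiplier $i\xi_j|\xi|^{s-1}$, and sum against $A_{jk}$. Your remarks on $L^2$-integrability, on the real-valuedness of $D^s u$, and on the integrability of $p_A\,\widehat u\,\overline{\widehat v}$ make explicit what the paper leaves implicit.
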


\begin{proof}
By using~\eqref{defnfrgradient}
in combination with Plancherel's identity, and recalling that $D^s_j\in L^2(\R^N)$ for any $j\in\{1,\dots, N\}$, we infer that
\[
\begin{split}
&\int_{\R^N}D_k^s u(x) D_j^s v(x) dx =\int_{\R^N}D_k^s u(x) \overline{D_j^s v(x)} dx =\int_{\R^N} \widehat{D_k^s u}(\xi)\overline{\widehat{D_j^s v}(\xi)}d\xi\\
&\qquad=\int_{\R^N}\bigl(i\xi_k|\xi|^{s-1}\widehat{u}(\xi)\bigr)\overline{\bigl(i\xi_j|\xi|^{s-1}\widehat{v}(\xi)\bigr)}\,d\xi=\int_{\R^N}\xi_k\xi_j|\xi|^{2s-2}\widehat{u}(\xi)\overline{\widehat{v}(\xi)}\,d\xi.
\end{split}
\]
Accordingly, by recalling~\eqref{BAdef1}, we get
\[
\mathcal{B}_A(u,v) =\sum_{j,k=1}^N A_{jk} \int_{\R^N}D_k^su(x) D_j^sv(x)\,dx=\int_{\R^N}|\xi|^{2s-2} \left(\sum_{j,k=1}^N A_{jk}\xi_j\xi_k\right)\widehat{u}(\xi)\overline{\widehat{v}(\xi)}\,d\xi,
\]
which is the desired result.
\end{proof}

\subsection{The structure of the kernel $k_A(x, y)$}
The proof of Proposition~\ref{thmkernelstruct} requires some intermediate steps. The next result guarantees the uniqueness of the representing kernel.
\begin{lemma}\label{lemmauniq}
Let $k_1,k_2\in L^1_{\mathrm{loc}}(\R^{2N}\setminus\Delta)$ satisfy~\eqref{lòkgyftdresdetfyguhijok}.  Assume that, for every~$u,v\in\mathcal S(\R^N)$, the integrals 
\[
\iint_{\R^{2N}} (u(x)-u(y))(v(x)-v(y))k_i(x,y)\,dx\,dy, \quad i\in\{1,2\}, 
\]
are absolutely convergent and satisfy
\[
\iint_{\R^{2N}} (u(x)-u(y))(v(x)-v(y))k_1(x,y)\,dx\,dy = \iint_{\R^{2N}} (u(x)-u(y))(v(x)-v(y))k_2(x,y)\,dx\,dy. 
\]

Then,
\[
k_1=k_2\quad\mbox{ a.e. in }\R^{2N}\setminus\Delta.
\]
\end{lemma}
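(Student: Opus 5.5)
We set $k:=k_1-k_2\in L^1_{\mathrm{loc}}(\R^{2N}\setminus\Delta)$. It is symmetric, and by linearity
\[
\iint_{\R^{2N}}(u(x)-u(y))(v(x)-v(y))\,k(x,y)\,dx\,dy=0\quad\mbox{ for every } u,v\in\mathcal S(\R^N).
\]
The integral is absolutely convergent, being the difference of two absolutely convergent ones. The goal is to show that $\iint k(x,y)\varphi(x)\psi(y)\,dx\,dy=0$ for all $\varphi,\psi\in C_c^\infty(\R^N)$ with disjoint supports. Products $\varphi(x)\psi(y)$ of this type, and finite sums of them, approximate any test function compactly supported in $\R^{2N}\setminus\Delta$. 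Hence $k$ vanishes as a distribution on the open set $\R^{2N}\setminus\Delta$, and since $k\in L^1_{\mathrm{loc}}$ there, $k=0$ a.e.

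To obtain the product identity, take $u,v\in C_c^\infty(\R^N)\subset\mathcal S(\R^N)$ with $\operatorname{supp}u\cap\operatorname{supp}v=\emptyset$. Expanding,
\[
(u(x)-u(y))(v(x)-v(y))=u(x)v(x)+u(y)v(y)-u(x)v(y)-u(y)v(x).
\]
The first two terms vanish identically because the supports are disjoint. The remaining two integrands $u(x)v(y)k(x,y)$ and $u(y)v(x)k(x,y)$ are supported in $\operatorname{supp}u\times\operatorname{supp}v$ and its reflection. These are compact sets at positive distance from $\Delta$, so $k$ is integrable there and each term converges absolutely on its own. By the symmetry of $k$, swapping $x$ and $y$ shows the two terms are equal. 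We therefore get
\[
-2\iint u(x)v(y)\,k(x,y)\,dx\,dy=0,
\]
which is the desired identity with $\varphi=u$, $\psi=v$.

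It remains to pass from products to general test functions. Let $\Phi\in C_c^\infty(\R^{2N}\setminus\Delta)$. Its support $K$ has positive distance $d$ from $\Delta$. Cover $K$ by finitely many cubes $Q_i\times Q_j$ of side less than $d/(2\sqrt N)$. Any such cube meeting $K$ has $Q_i$ and $Q_j$ at positive distance, so slightly enlarged cubes are still disjoint. A partition of unity subordinate to this cover writes $\Phi$ as a finite sum of functions supported in $\tilde Q_i\times\tilde Q_j$ with $\tilde Q_i\cap\tilde Q_j=\emptyset$.

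Each of these pieces can be approximated in $C^0$, with supports in a fixed compact subset of $\tilde Q_i\times\tilde Q_j$, by finite sums $\sum\varphi_l(x)\psi_l(y)$ with $\operatorname{supp}\varphi_l\subset\tilde Q_i$ and $\operatorname{supp}\psi_l\subset\tilde Q_j$. One way is Stone--Weierstrass followed by multiplication by cutoffs $\chi_i(x)\chi_j(y)$; another is a Fourier series expansion on a cube. Since $k\in L^1$ on that fixed compact set, uniform convergence lets us pass to the limit, and we conclude $\iint k\,\Phi=0$.

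The only delicate point is the justification of the splitting step, and it is handled exactly by the disjointness of supports together with local integrability away from $\Delta$. Absolute convergence of the original integrals is then not even needed beyond linearity.
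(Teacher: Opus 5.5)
Your proof is correct, and its first half coincides with the paper's. You set $k=k_1-k_2$ and test with $u,v\in C_c^\infty(\R^N)$ of disjoint supports. The diagonal terms drop out, and each cross term is absolutely integrable because it lives on a compact subset of $\R^{2N}\setminus\Delta$. Symmetry then gives $\iint k(x,y)\,u(x)v(y)\,dx\,dy=0$.

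You depart from the paper in the passage from tensor products to arbitrary test functions.

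\textbf{The paper's route.} It works on a fixed product $U\times V$ with $\overline U\cap\overline V=\varnothing$. It cites the uniqueness part of H\"ormander's theorem on tensor products of distributions (Theorem 5.1.1 in his book) to get $T_h=0$ in $\mathcal D'(U\times V)$, hence $h=0$ a.e. there. It then globalizes by exhausting $\R^{2N}\setminus\Delta$ with compacts $K_m$, each covered by finitely many such products.

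\textbf{Your route.} You globalize at the level of test functions, through a partition of unity subordinate to small product cubes with disjoint factors. Your choice of side $<d/(2\sqrt N)$ works, since $|x-y|\ge\sqrt2\,d$ on $K$. You then prove the needed density by hand with Stone--Weierstrass and cutoffs. What makes this possible is that $k$ is an $L^1_{\mathrm{loc}}$ function, not a general distribution. So uniform approximation with supports in a fixed compact set is enough. This is much weaker than the density of $C_c^\infty(U)\otimes C_c^\infty(V)$ in the $C_c^\infty(U\times V)$ topology that underlies the cited theorem.

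Your argument is therefore self-contained and elementary, at the price of some bookkeeping. The paper's is shorter but relies on a distribution-theoretic black box.
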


\begin{proof}
We set $h:=k_1-k_2$.  Then, $h\in L^1_{loc}(\R^{2N}\setminus\Delta)$ and~$h(x, y)=h(y, x)$ for almost every~$(x,y)\in\R^{2N}\setminus\Delta$. Moreover, the absolute convergence assumption implies that, for every~$u,v\in\mathcal S(\R^N)$, 
\[
(u(x)-u(y))(v(x)-v(y))h(x,y) \in L^1(\R^{2N}).
\]
Hence, taking the difference of the two absolutely convergent integrals, we obtain that
\begin{equation}\label{defrbgrnjkwdlqòshbasnjk}
\iint_{\R^{2N}}(u(x)-u(y))(v(x)-v(y))h(x,y)\,dx\,dy=0
\end{equation}
for every~$u,v\in\mathcal S(\R^N)$.

Let $U,V\subset\R^N$ be open sets such that $\overline U\cap\overline V=\varnothing$, and take $u\in C_c^{\infty}(U)$ and~$v\in C_c^{\infty}(V)$. Therefore, by~\eqref{defrbgrnjkwdlqòshbasnjk}, we get
\[
\begin{split}
0&=\iint_{\R^{2N}}(u(x)-u(y))(v(x)-v(y))h(x,y)\,dx\,dy\\
&=-\iint_{\R^{2N}}u(x)v(y)h(x,y)\,dx\,dy-\iint_{\R^{2N}}u(y)v(x)h(x,y)\,dx\,dy.
\end{split}
\]
Notice that the two integrals on the right hand side are absolutely convergent. Indeed, their integrands are supported in $\mbox{supp}(u)\times \mbox{supp}(v)$ and~$\mbox{supp}(v)\times \mbox{supp}(u)$, respectively,  which are compact subsets of $\R^{2N}\setminus\Delta$. Thus, their integrability follows from the fact that $h\in L^1_{loc}(\R^{2N}\setminus\Delta)$. 

By using the symmetry of $h$ and exchanging $x$ and~$y$ in the second integral, the two terms on the right hand side are equal.  Hence, for every~$u\in C_c^{\infty}(U)$ and~$v\in C_c^{\infty}(V)$,
\begin{equation}\label{èplqsdkowABJNCODKMsajcndbvzjknlkò}
\iint_{U\times V} u(x)v(y)h(x,y)\,dy\,dx=0.
\end{equation}

Since $h\in L^1_{loc}(U\times V)$, it defines a distribution $T_h\in\mathcal D'(U\times V)$ such that
\[
T_h(\varphi):=\iint_{U\times V} \varphi(x,y)\,h(x,y)\,dx\,dy,\quad{\mbox{for all }}\,\varphi\in C_c^{\infty}(U\times V).
\]
Accordingly, by~\eqref{èplqsdkowABJNCODKMsajcndbvzjknlkò}, we get
\[
T_h\left(u(x)v(y)\right)=0
\]
for every~$u\in C_c^{\infty}(U)$ and~$v\in C_c^{\infty}(V)$.

The uniqueness part of \cite[Theorem~5.1.1]{MR1996773}, applied to the zero distributions in $\mathcal D'(U)$ and~$\mathcal D'(V)$, implies that the only distribution~$T\in\mathcal D'(U\times V)$ satisfying
\[
T\left(u(x)v(y)\right)=0\quad\mbox{ for all } u\in C_c^{\infty}(U),\ v\in C_c^{\infty}(V),
\]
is the zero distribution. Hence
\[
T_h=0\quad\mbox{ in }\mathcal D'(U\times V).
\]
Since $T_h$ is the distribution induced by the locally integrable function $h$,  it follows that
\begin{equation}\label{vfcdxszaesrdtfghj}
h=0\quad\mbox{ a.e. in }U\times V.
\end{equation}

We now extend~\eqref{vfcdxszaesrdtfghj} to the whole $\R^{2N}\setminus\Delta$. To accomplish this goal, for every~$m\in\N$, we set
\[
K_m:=\left\{(x,y)\in\R^{2N} \mbox{ s.t. } |x|\le m,\ |y|\le m,\ |x-y|\ge \frac1m\right\}.
\]
Then $K_m$ is compact, $K_m\subset K_{m+1}$, and
\[
\R^{2N}\setminus\Delta=\bigcup_{m=1}^{+\infty} K_m.
\]
For every~$(x_0,y_0)\in K_m$, since $x_0\ne y_0$, there exist two open neighborhoods $U, V$ with $x_0\in U$, $y_0\in V$,  such that $\overline U\cap\overline V=\varnothing$. 

The sets $U\times V$ form an open cover of $K_m$. By compactness,  there exist finitely many such products $U_j\times V_j$ covering $K_m$. Since, thanks to~\eqref{vfcdxszaesrdtfghj}, $h=0$ a.e. in each $U_j\times V_j$, 
it follows that $h=0$ a.e. in $K_m$.
Since this holds for every~$m\in\N$, we conclude that $h=0$ a.e. in $\R^{2N}\setminus\Delta$, namely $k_1=k_2$ a.e. in $\R^{2N}\setminus\Delta$, as desired.
\end{proof}

We now show the translation invariance of any kernel satisfying~\eqref{mnbhgvcfdxszxdfcgvhb}.
\begin{lemma}\label{lemmatranslation}
Let $k_A\in L^1_{\mathrm{loc}}(\R^{2N}\setminus\Delta)$ satisfy~\eqref{lòkgyftdresdetfyguhijok}. Assume that~\eqref{mnbhgvcfdxszxdfcgvhb} holds for every~$u, v\in\mathcal S(\R^N)$.

Then, for every~$z\in\R^N$,
\[
k_A(x+z,y+z)=k_A(x,y)\quad\mbox{ for a.e.  }(x,y)\in\R^{2N}\setminus\Delta.
\]
\end{lemma}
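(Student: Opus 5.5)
The idea is to show that the translated kernel $k_z(x,y):=k_A(x+z,y+z)$ represents the same bilinear form as $k_A$, and then to conclude with the uniqueness Lemma~\ref{lemmauniq}. Fix $z\in\R^N$ and define $k_z$ as above. Since $(x,y)\mapsto(x+z,y+z)$ is a measure preserving diffeomorphism of $\R^{2N}$ that maps $\Delta$ onto itself, we have $k_z\in L^1_{loc}(\R^{2N}\setminus\Delta)$. Moreover, $k_z$ inherits the symmetry~\eqref{lòkgyftdresdetfyguhijok} from $k_A$.

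Now take $u,v\in\mathcal S(\R^N)$ and set $u_z:=u(\cdot-z)$ and $v_z:=v(\cdot-z)$, which again belong to $\mathcal S(\R^N)$. Changing variables $x\mapsto x+z$, $y\mapsto y+z$ gives
\[
\iint_{\R^{2N}}(u(x)-u(y))(v(x)-v(y))k_z(x,y)\,dx\,dy=\iint_{\R^{2N}}(u_z(x)-u_z(y))(v_z(x)-v_z(y))k_A(x,y)\,dx\,dy.
\]
By~\eqref{mnbhgvcfdxszxdfcgvhb}, the right-hand side equals $\mathcal B_A(u_z,v_z)$. We have $\widehat{u_z}(\xi)=e^{-iz\cdot\xi}\widehat u(\xi)$, and similarly for $v_z$, so Proposition~\ref{BAsymbol} yields $\mathcal B_A(u_z,v_z)=\mathcal B_A(u,v)$, since the phases cancel in $\widehat{u_z}\overline{\widehat{v_z}}$. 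Hence $k_z$ and $k_A$ give the same value of the double integral for every pair $u,v$.

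The remaining step, which I expect to be the only delicate one, is to check the absolute convergence hypothesis of Lemma~\ref{lemmauniq}. I read~\eqref{mnbhgvcfdxszxdfcgvhb} as including that its right-hand side is an absolutely convergent integral, which is the standing meaning of ``representing'' in the paper. Under this reading, absolute convergence for $k_A$ holds for all Schwartz pairs. For $k_z$, the same change of variables applied to the absolute values of the integrands shows that the integral for $u,v$ against $k_z$ is absolutely convergent exactly when the integral for $u_z,v_z$ against $k_A$ is, and the latter holds.

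Lemma~\ref{lemmauniq} then gives $k_z=k_A$ a.e. in $\R^{2N}\setminus\Delta$, which is the claim. If absolute convergence were not part of the hypothesis, I would go back to the proof of Lemma~\ref{lemmauniq}. There, only test functions with disjoint supports are used, and for those the integrals are automatically absolutely convergent by local integrability on compact subsets of $\R^{2N}\setminus\Delta$. The same argument then applies verbatim to $h:=k_z-k_A$.
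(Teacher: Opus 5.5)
Your proposal is correct and follows the paper's proof essentially step for step. You translate the test functions, use the phase cancellation in Proposition~\ref{BAsymbol} to get $\mathcal B_A(u_z,v_z)=\mathcal B_A(u,v)$, change variables to see that $k_A(\cdot+z,\cdot+z)$ represents the same form, and conclude with Lemma~\ref{lemmauniq}. Your explicit check of the absolute-convergence hypothesis of Lemma~\ref{lemmauniq}, including the fallback via disjointly supported test functions, is something the paper leaves implicit, and you handle it correctly.
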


\begin{proof}
For $z\in\R^N$, we set
\[
u_z(x):=u(x-z)\quad\mbox{ and }\quad v_z(x):=v(x-z).
\]
Since
\[
\mathcal F(u_z)(\xi)=e^{-iz\cdot\xi}\mathcal F(u)(\xi)\quad\mbox{ and }\quad \mathcal F(v_z)(\xi)=e^{-iz\cdot\xi}\mathcal F(v)(\xi),
\]
by Proposition~\ref{BAsymbol} we get
\[
\mathcal{B}_A(u_z,v_z)=\int_{\R^N}p_A(\xi)e^{-iz\cdot\xi}\widehat{u}(\xi) e^{iz\cdot\xi}\overline{\widehat{v}(\xi)}\,d\xi=\int_{\R^N}p_A(\xi)\widehat{u}(\xi)\overline{\widehat{v}(\xi)}\,d\xi=\mathcal{B}_A(u,v).
\]
Accordingly,  applying~\eqref{mnbhgvcfdxszxdfcgvhb} to $u_z, v_z$ and changing variables, we obtain that

\[
\begin{split}
\mathcal{B}_A(u,v)&=\mathcal{B}_A(u_z,v_z)=\iint_{\R^{2N}}\left(u(x-z)-u(y-z)\right)\left(v(x-z)-v(y-z)\right) k_A(x,y)\,dx\,dy\\
&=\iint_{\R^{2N}}(u(x)-u(y))(v(x)-v(y))k_A(x+z,y+z)\,dx\,dy.
\end{split}
\]
On this account, we see that~$k_A(x+z,y+z)$ is another symmetric kernel which represents $\mathcal B_A$.  Thus,  Lemma~\ref{lemmauniq} yields the desired result.
\end{proof}

We now prove an homogeneity property of any kernel
satisfying~\eqref{mnbhgvcfdxszxdfcgvhb}.
\begin{lemma}\label{lemmahomogn}
Let $k_A\in L^1_{\mathrm{loc}}(\R^{2N}\setminus\Delta)$ satisfy~\eqref{lòkgyftdresdetfyguhijok}. Assume that~\eqref{mnbhgvcfdxszxdfcgvhb} holds for every~$u, v\in\mathcal S(\R^N)$.

Then, for every~$r>0$,
\[
k_A(rx,ry)=r^{-N-2s}k_A(x,y)\quad\mbox{ for a.e.  }(x,y)\in\R^{2N}\setminus\Delta.
\]
\end{lemma}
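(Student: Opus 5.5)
We mirror the proof of Lemma~\ref{lemmatranslation}, replacing translations by dilations. For $r>0$ and $u,v\in\mathcal S(\R^N)$ we set
\[
u_r(x):=u(x/r)\quad\mbox{ and }\quad v_r(x):=v(x/r),
\]
which again belong to $\mathcal S(\R^N)$. With the normalization of the Fourier transform fixed above, we have $\widehat{u_r}(\xi)=r^N\widehat u(r\xi)$, and similarly for $v_r$.

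The first step is to compute how $\mathcal B_A$ scales. By Proposition~\ref{BAsymbol}, and since $p_A$ is homogeneous of degree $2s$,
\[
\mathcal B_A(u_r,v_r)=r^{2N}\int_{\R^N}p_A(\xi)\widehat u(r\xi)\overline{\widehat v(r\xi)}\,d\xi .
\]
Substituting $\zeta=r\xi$, so that $d\xi=r^{-N}d\zeta$ and $p_A(\zeta/r)=r^{-2s}p_A(\zeta)$, we obtain
\[
\mathcal B_A(u_r,v_r)=r^{N-2s}\mathcal B_A(u,v).
\]

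The second step applies~\eqref{mnbhgvcfdxszxdfcgvhb} to the pair $u_r,v_r$ and changes variables $x=rx'$, $y=ry'$, which produces a Jacobian $r^{2N}$:
\[
\mathcal B_A(u_r,v_r)=r^{2N}\iint_{\R^{2N}}(u(x')-u(y'))(v(x')-v(y'))\,k_A(rx',ry')\,dx'\,dy'.
\]
Comparing the two expressions gives
\[
\mathcal B_A(u,v)=\iint_{\R^{2N}}(u(x)-u(y))(v(x)-v(y))\,\widetilde k(x,y)\,dx\,dy,
\qquad\mbox{where}\quad \widetilde k(x,y):=r^{N+2s}k_A(rx,ry).
\]

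The third step checks that $\widetilde k$ is an admissible competitor for Lemma~\ref{lemmauniq}. It is symmetric because $k_A$ is. It lies in $L^1_{\rm loc}(\R^{2N}\setminus\Delta)$ because dilations map compact subsets of $\R^{2N}\setminus\Delta$ to compact subsets of the same set. Its integral against $(u(x)-u(y))(v(x)-v(y))$ is absolutely convergent, since after undoing the change of variables it equals the integral for $k_A$ with $u_r,v_r$. Lemma~\ref{lemmauniq} then gives $\widetilde k=k_A$ almost everywhere, which is exactly the claim $k_A(rx,ry)=r^{-N-2s}k_A(x,y)$.

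The only point that needs attention is the absolute convergence hypothesis of Lemma~\ref{lemmauniq}. It is implicitly part of the meaning of~\eqref{mnbhgvcfdxszxdfcgvhb}, which is used in the same way in Lemma~\ref{lemmatranslation}, and the change of variables transfers it from $k_A$ to $\widetilde k$. Beyond that, the argument is just the bookkeeping of the powers of $r$.
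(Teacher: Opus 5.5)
Your proposal is correct and follows essentially the same route as the paper: rescale the test functions, use the $2s$-homogeneity of $p_A$ to get the scaling of $\mathcal B_A$, change variables in the double integral, and conclude with Lemma~\ref{lemmauniq}. The differences are cosmetic. You dilate by $u(x/r)$ instead of the paper's $u(rx)$, so you read off the competitor $r^{N+2s}k_A(rx,ry)$ directly rather than renaming variables at the end, and you explicitly check the hypotheses of Lemma~\ref{lemmauniq} (symmetry, local integrability, absolute convergence), which the paper leaves implicit.
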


\begin{proof}
Let $r>0$. We set
\[
u_r(x):=u(rx)\quad\mbox{ and }\quad v_r(x):=v(rx).
\]
Hence,
\[
\mathcal F(u_r)(\xi)=r^{-N}\mathcal F(u)\left(\frac{\xi}{r}\right) \quad\mbox{ and }\quad \mathcal F(v_r)(\xi)=r^{-N}\mathcal F(v)\left(\frac{\xi}{r}\right).
\]
By using Proposition~\ref{BAsymbol}, the change of variable~$\eta:=\xi/r$, and recalling that~$p_A(\xi)$ is homogeneous of degree~$2s$,  we infer that
\begin{equation}\label{jnkbhgvcfdxszfcgvhbjnkml,.}
\mathcal{B}_A(u_r,v_r)=\int_{\R^N}p_A(\xi)\widehat{u_r}(\xi)\overline{\widehat{v_r}(\xi)}\,d\xi=r^{-N}\int_{\R^N}p_A(r\eta)\widehat{u}(\eta)\overline{\widehat{v}(\eta)}\,d\eta=r^{2s-N}\mathcal{B}_A(u,v).
\end{equation}

On the one hand, applying~\eqref{mnbhgvcfdxszxdfcgvhb} to $u_r, v_r$, and using the changes of variable~$X:=rx, Y:=ry$, we obtain that
\begin{equation}\label{mkljnhbgvfcdxszxdcfvg0}
\begin{split}
\mathcal{B}_A(u_r,v_r)&=\iint_{\R^{2N}}\left(u(rx)-u(ry)\right)\left(v(rx)-v(ry)\right)k_A(x,y)\,dx\,dy\\
&=r^{-2N}\iint_{\R^{2N}}(u(X)-u(Y))(v(X)-v(Y))k_A\left(\frac{X}{r},\frac{Y}{r}\right)\,dX\,dY.
\end{split}
\end{equation}
On the other hand, by~\eqref{jnkbhgvcfdxszfcgvhbjnkml,.} and~\eqref{mnbhgvcfdxszxdfcgvhb} applied directly to $u$ and~$v$, 
\begin{equation}\label{mkljnhbgvfcdxszxdcfvg}
\mathcal{B}_A(u_r,v_r)=r^{2s-N}\mathcal{B}_A(u,v)=r^{2s-N}\iint_{\R^{2N}}(u(X)-u(Y))(v(X)-v(Y))k_A(X,Y)\,dX\,dY.
\end{equation}
Thus, by~\eqref{mkljnhbgvfcdxszxdcfvg0},~\eqref{mkljnhbgvfcdxszxdcfvg}, and Lemma~\ref{lemmauniq}, we deduce that
\[
k_A\left(\frac{X}{r},\frac{Y}{r}\right) = r^{N+2s}k_A(X,Y)\quad\mbox{ for a.e. } (X,Y)\in\R^{2N}\setminus\Delta.
\]
Setting $x:=X/r$ and~$y:=Y/r$, we obtain the desired result.
\end{proof}

We are now in the position to prove Proposition~\ref{thmkernelstruct}
and Corollary~\ref{corkernelbounds}.

\begin{proof}[Proof of Proposition~\ref{thmkernelstruct}]
We first show that there exists~$K_A\in L^1_{\mathrm{loc}}(\R^N\setminus\{0\})$ such that
\label{PALKMND:01p}
\begin{equation}\label{mnkjb hvcgfdxcghjnm}
\mbox{$k_A(x, y)=K_A(x-y)$}.
\end{equation}
To this end, we set~$\zeta:=x-y$ and~$q:=y$, and we define
\[
F(\zeta,q):=k_A(q+\zeta,q).
\]
Then, $F\in L^1_{loc}\left((\R^N\setminus\{0\})\times\R^N\right)$.  

We observe that, by Lemma~\ref{lemmatranslation},
for any~$z\in\R^N$,  
\[
k_A(q+\zeta+z,q+z)=k_A(q+\zeta,q)\quad\mbox{ for a.e.  }(\zeta,q)\in (\R^N\setminus\{0\})\times\R^N,
\]
namely,
\begin{equation}\label{lpkoijhugyftdreswrdtfgybhjnmkl,}
F(\zeta,q+z)=F(\zeta,q)\quad\mbox{ for a.e.  }(\zeta,q)\in (\R^N\setminus\{0\})\times\R^N.
\end{equation}
Thus, there exists a function $K_A\in L^1_{\mathrm{loc}}(\R^N\setminus\{0\})$ such that
\[
F(\zeta,q)=K_A(\zeta)\quad\mbox{ for a.e.  }(\zeta,q)\in (\R^N\setminus\{0\})\times\R^N.
\]
Returning to the original variables, we obtain~\eqref{mnkjb hvcgfdxcghjnm}.

By~\eqref{mnkjb hvcgfdxcghjnm} and Lemma~\ref{lemmahomogn}, we infer that, for every~$r>0$
and for a.e.~$(x,y)\in\R^{2N}\setminus\Delta$,
\begin{equation}\label{bfdcxszsxedrctfvgybhunijm}
K_A(r(x-y))= k_A(rx,ry) = r^{-N-2s}k_A(x, y)= r^{-N-2s}K_A(x-y).
\end{equation}
Thus, by Lemma~\ref{lemmatecnico2.2} (here applied with $\gamma:=N+2s$) we infer that there exists $m_A\in L^1(\mathbb S^{N-1})$ such that
\begin{equation}\label{mn bjhvcgxfszddxfcvgbhnjkm,lò.à-}
K_A(\rho\theta)=\rho^{-N-2s}m_A(\theta)\quad\mbox{ for a.e.  }(\rho,\theta)\in(0,+\infty)\times\mathbb S^{N-1}.
\end{equation}
This is equivalent to
\[
K_A(z) =\frac{1}{|z|^{N+2s}} m_A\left(\frac{z}{|z|}\right)\quad\mbox{ for a.e.  }z\in\R^N\setminus\{0\}.
\]
Since $k_A(x,y)=k_A(y,x)$ for almost every~$(x,y)$, we have $K_A(z)=K_A(-z)$ for almost every~$z\in\R^N$. Therefore, by~\eqref{mn bjhvcgxfszddxfcvgbhnjkm,lò.à-} it follows that $m_A(\theta)=m_A(-\theta)$ for a.e.  $\theta\in\mathbb S^{N-1}$.
Moreover, recalling~\eqref{mnkjb hvcgfdxcghjnm}, we conclude that~\eqref{kernelconmATBD} holds. Finally, the uniqueness follows by Lemma~\ref{lemmauniq}.
\end{proof}

\begin{proof}[Proof of Corollary~\ref{corkernelbounds}]
By Proposition~\ref{thmkernelstruct} and~\eqref{srdtfyguioplèòlkoijhugyftr1}, we infer that
\[
\lambda\le m_A\left(\frac{x-y}{|x-y|}\right)|x-y|^{\alpha-2s}\le\Lambda
\]
for almost every~$(x,y)$ such that $0<|x-y|\le 1$. Namely,
\[
\lambda\le m_A\left(\frac{z}{|z|}\right)|z|^{\alpha-2s}\le\Lambda\quad\mbox{ for a.e.  $z$, with $0<|z|\le 1$}.
\]
By writing $z:=\rho\theta$,
with $\rho\in(0,1]$ and~$\theta\in\mathbb S^{N-1}$,  we obtain that
\begin{equation}\label{+èpoi0u8y7t6r5e4w3aq<szdxfc}
\lambda\le m_A(\theta)\rho^{\alpha-2s}\le\Lambda\quad\mbox{ for a.e. } (\rho,\theta)\in(0,1]\times\mathbb S^{N-1}.
\end{equation}
The lower bound in~\eqref{+èpoi0u8y7t6r5e4w3aq<szdxfc} implies, in particular, that
\begin{equation}\label{nbhjgvcfdxsza<sxdcfvgb}
m_A(\theta)>0.
\end{equation}

Now, since~\eqref{+èpoi0u8y7t6r5e4w3aq<szdxfc} holds for almost every~$\rho\in(0,1)$, there exists an infinitesimal sequence~$\rho_j\subset(0,1)$ as~$j\to+\infty$
such that~\eqref{+èpoi0u8y7t6r5e4w3aq<szdxfc} holds at $\rho=\rho_j$ for every~$j\in\N$.

Assume first that $\alpha>2s$. Then,
\[
\lim_{j\to +\infty} m_A(\theta)\rho_j^{\alpha-2s} = 0,
\]
which contradicts the lower bound in~\eqref{+èpoi0u8y7t6r5e4w3aq<szdxfc}.

Suppose instead that $\alpha<2s$. By~\eqref{nbhjgvcfdxsza<sxdcfvgb}, we have
\[
\lim_{j\to +\infty} m_A(\theta)\rho_j^{\alpha-2s} = +\infty,
\]
which contradicts the upper bound in~\eqref{+èpoi0u8y7t6r5e4w3aq<szdxfc}.

From these considerations, we conclude that necessarily~$
\alpha=2s$.
Using this information in~\eqref{+èpoi0u8y7t6r5e4w3aq<szdxfc}, we get
\begin{equation}\label{lkjohiyguftdersawesrdtfyguhij}
\lambda\le m_A(\theta)\le\Lambda\quad\mbox{for a.e. }\theta\in\mathbb S^{N-1}.
\end{equation}

We now consider the case $|x-y|>1$. In this case, we use~\eqref{srdtfyguioplèòlkoijhugyftr2} and we see that
\begin{equation}\label{nhbgvfcdxsrdtfygbhujnkm}
m_A(\theta)\le M\rho^{2s-\eta} \quad\mbox{ for a.e. } (\rho,\theta)\in(1,+\infty)\times\mathbb S^{N-1},
\end{equation}
If $\eta>2s$, then we can choose a diverging sequence~$\rho_j\in(1,+\infty)$ as~$j\to+\infty$ such that~\eqref{nhbgvfcdxsrdtfygbhujnkm} holds at~$\rho=\rho_j$ for all~$j\in\N$.
Accordingly, by~\eqref{lkjohiyguftdersawesrdtfyguhij} and~\eqref{nhbgvfcdxsrdtfygbhujnkm}, we have that
\[
\begin{split}
&\lambda\le m_A(\theta)\le M\rho_j^{2s-\eta} \ \mbox{ for all } j\in\N, \\
\mbox{ and } \quad &\lim_{j\to +\infty} M\rho_j^{2s-\eta} =0,
\end{split}
\]
which is a contradiction. Hence,  $\eta\le 2s$.  Since $\eta>0$ by assumption, we infer that~$0<\eta\le 2s$.

Therefore, choosing a sequence~$\rho_j \in(1,+\infty)$ such that
\[
\lim_{j\to +\infty} \rho_j = 1
\]
and~\eqref{nhbgvfcdxsrdtfygbhujnkm} holds at $\rho=\rho_j$ for all $j\in\N$, we get $m_A(\theta)\le M$. 

Combining this with~\eqref{lkjohiyguftdersawesrdtfyguhij}, we conclude that
\[
\lambda\le m_A(\theta)\le\min\{\Lambda,M\} \quad\mbox{for a.e. }\theta\in\mathbb S^{N-1}.
\]
Gathering all these considerations, we obtain the desired claims.
\end{proof}

\subsection{Fourier representation of the bilinear form induced by $k_A(x, y)$}\label{PR3}

By Proposition~\ref{thmkernelstruct},
any symmetric kernel $k_A$ satisfying~\eqref{mnbhgvcfdxszxdfcgvhb} must have the form~\eqref{kernelconmATBD}. In light of this fact,
we will derive the Fourier representation of the bilinear form induced by $k_A(x, y)$ as follows.

Let $m\in L^1(\mathbb S^{N-1})$ be such that $ m(-\theta)=m(\theta)$ for almost every~$\theta\in\mathbb S^{N-1}$. We define
\begin{equation}\label{Emdef}
\begin{split}
\mathcal{E}_m(u,v)&:=\frac{c_{N,s}}{2}\iint_{\R^{2N}}\frac{(u(x)-u(y))(v(x)-v(y))}{|x-y|^{N+2s}}m\left(\frac{x-y}{|x-y|}\right)\,dx\,dy\\
&=\frac{c_{N,s}}{2}\iint_{\R^{2N}}\frac{(u(x)-u(x-h))(v(x)-v(x-h))}{|h|^{N+2s}}m\left(\frac{h}{|h|}\right)\,dx\,dh
\end{split}
\end{equation}
where
\[
c_{N,s}:=\frac{2^{2s}s\,\Gamma\!\left(\frac N2+s\right)}{\pi^{N/2}\Gamma(1-s)}>0.
\]
With this choice, we have the following:
\begin{lemma}\label{prop:E1normalization}
Let $s\in(0,1)$. Then, for every~$u,v\in\mathcal S(\mathbb R^N)$,
\begin{equation}\label{E1normalization}
\mathcal{E}_1(u,v)=\int_{\R^N}D^su(x)\cdot D^sv(x) dx.
\end{equation}
\end{lemma}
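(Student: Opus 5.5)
We reduce both sides to Fourier multipliers and compare symbols. By Proposition~\ref{BAsymbol} with $A$ the identity matrix, the right hand side of~\eqref{E1normalization} equals
\[
\int_{\R^N}|\xi|^{2s}\,\widehat u(\xi)\overline{\widehat v(\xi)}\,d\xi .
\]
It therefore suffices to show that $\mathcal E_1(u,v)$ has this same Fourier representation. By polarization (both sides are symmetric bilinear forms), one could also reduce to $u=v$, although the direct computation works equally well for $u\ne v$.

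We use the second expression in~\eqref{Emdef} and fix $h\ne0$. Plancherel's identity in the variable $x$, together with $\mathcal F(u(\cdot-h))(\xi)=e^{-ih\cdot\xi}\widehat u(\xi)$, gives
\[
\int_{\R^N}(u(x)-u(x-h))(v(x)-v(x-h))\,dx=\int_{\R^N}|1-e^{-ih\cdot\xi}|^2\,\widehat u(\xi)\overline{\widehat v(\xi)}\,d\xi .
\]
Here we use that $v$ is real-valued, and that for nonreal test functions one should instead write $\overline{v}$ consistently. Note also that $|1-e^{-ih\cdot\xi}|^2=2(1-\cos(h\cdot\xi))$.

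Next we integrate in $h$ against $|h|^{-N-2s}$ and exchange the order of integration by Tonelli/Fubini. The exchange is justified because
\[
\int_{\R^N}\frac{2(1-\cos(h\cdot\xi))}{|h|^{N+2s}}\,dh=C\,|\xi|^{2s}
\]
is finite. Indeed, the integrand is bounded by $\min\{|h|^2|\xi|^2,4\}\,|h|^{-N-2s}$, and $|\xi|^{2s}|\widehat u||\widehat v|\in L^1$ because $u,v\in\mathcal S(\R^N)$. The identity $\int 2(1-\cos(h\cdot\xi))|h|^{-N-2s}\,dh=C|\xi|^{2s}$ follows from rotation invariance and scaling: substitute $h=k/|\xi|$ and rotate $\xi/|\xi|$ to $e_1$. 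This yields
\[
\mathcal E_1(u,v)=\frac{c_{N,s}}{2}\,C\int_{\R^N}|\xi|^{2s}\,\widehat u(\xi)\overline{\widehat v(\xi)}\,d\xi ,
\qquad C=\int_{\R^N}\frac{2(1-\cos k_1)}{|k|^{N+2s}}\,dk .
\]

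The main obstacle is the constant: we must verify that $\frac{c_{N,s}}{2}\,C=1$, that is,
\[
\int_{\R^N}\frac{1-\cos k_1}{|k|^{N+2s}}\,dk=\frac{1}{c_{N,s}} .
\]
This is the classical normalization constant of the fractional Laplacian. To compute it, we first integrate out the variables $k'=(k_2,\dots,k_N)$ using
\[
\int_{\R^{N-1}}(k_1^2+|k'|^2)^{-\frac{N+2s}{2}}\,dk'=|k_1|^{-1-2s}\,\pi^{\frac{N-1}{2}}\,\frac{\Gamma\!\left(\frac{1+2s}{2}\right)}{\Gamma\!\left(\frac{N+2s}{2}\right)} .
\]
This reduces the problem to the one-dimensional integral
\[
\int_{\R}\frac{1-\cos t}{|t|^{1+2s}}\,dt=\frac{\pi}{\Gamma(1+2s)\sin(\pi s)} ,
\]
which can be obtained by integration by parts together with the Gamma function integral $\int_0^\infty t^{-2s}\sin t\,dt$. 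We then combine the Gamma factors using the duplication formula and the reflection formula $\Gamma(s)\Gamma(1-s)=\pi/\sin(\pi s)$. The product should simplify to
\[
\frac{\pi^{N/2}\,\Gamma(1-s)}{2^{2s}\,s\,\Gamma\!\left(\frac N2+s\right)}=\frac{1}{c_{N,s}} ,
\]
which gives $\frac{c_{N,s}}{2}\,C=1$ and hence~\eqref{E1normalization}. Alternatively, at this step we may simply cite the standard identity (e.g.\ from the Hitchhiker's guide to fractional Sobolev spaces) expressing $(-\Delta)^s$ with exactly this constant $c_{N,s}$.
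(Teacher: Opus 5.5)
Your proof is correct. It shares the skeleton of the paper's argument in Appendix~\ref{appe:E1normalization}: Plancherel in $x$ for fixed $h$, Fubini justified by the finiteness of the multiplier and the decay of $\widehat u,\widehat v$, and reduction to the single identity
\[
\int_{\R^N}\frac{1-\cos(h\cdot\xi)}{|h|^{N+2s}}\,dh=\frac{|\xi|^{2s}}{c_{N,s}} .
\]
You differ only in how this constant is evaluated.

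The paper uses the subordination formula $|h|^{-N-2s}=\Gamma(\frac N2+s)^{-1}\int_0^{+\infty}t^{\frac N2+s-1}e^{-t|h|^2}\,dt$, which turns the $h$-integral into a Gaussian one, $\int_{\R^N}(1-\cos(h\cdot\xi))e^{-t|h|^2}\,dh=(\pi/t)^{N/2}\bigl(1-e^{-|\xi|^2/(4t)}\bigr)$. Its only one-dimensional input is then $\int_0^{+\infty}(1-e^{-r})r^{-1-s}\,dr=\Gamma(1-s)/s$. This buys you nonnegative integrands throughout (so Tonelli suffices), no rotation or slicing, and $c_{N,s}$ appearing directly in the stated form.

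Your route is the classical one from the Hitchhiker's guide: rotate and scale to $\xi=e_1$, integrate out $k'$ by a Beta-type integral, then evaluate a universal one-dimensional integral. It makes the dependence on $N$ transparent. The cost is that it passes through the conditionally convergent integral $\int_0^{+\infty}t^{-2s}\sin t\,dt$. Its usual closed form $\Gamma(1-2s)\cos(\pi s)$ is of the form $\infty\cdot 0$ at $s=\frac12$; it is better written as $\pi/(2\Gamma(2s)\sin(\pi s))$, which is valid on all of $(0,1)$. You then also need the duplication and reflection formulas to recombine.

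Your constants do check out. With $\Gamma(1+2s)=2^{2s}\pi^{-1/2}\Gamma(\frac12+s)\Gamma(1+s)$ and $\Gamma(s)\Gamma(1-s)=\pi/\sin(\pi s)$, the product
\[
\pi^{\frac{N-1}{2}}\,\frac{\Gamma(\frac12+s)}{\Gamma(\frac N2+s)}\cdot\frac{\pi}{\Gamma(1+2s)\sin(\pi s)}
\]
equals exactly $\pi^{N/2}\Gamma(1-s)/\bigl(2^{2s}s\,\Gamma(\frac N2+s)\bigr)=1/c_{N,s}$. So your ``should simplify'' can be stated as a fact rather than a hope.
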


The proof of Lemma~\ref{prop:E1normalization} is standard, but we provide the details in Appendix~\ref{appe:E1normalization} for reader's convenience.

The next result guarantees that the integral in~\eqref{Emdef} is absolutely convergent on Schwartz functions. 

\begin{lemma}\label{lemmabsconv}
Let $u,v\in\mathcal S(\R^N)$. Then, there exists a constant $C=C(N, s)>0$ such that
\[
|\mathcal{E}_m(u,v)|\le C\,\|m\|_{L^1(\mathbb S^{N-1})}\left(\|\nabla u\|_{L^2(\R^N)}\|\nabla v\|_{L^2(\R^N)} +\|u\|_{L^2(\R^N)}\|v\|_{L^2(\R^N)} \right).
\]
\end{lemma}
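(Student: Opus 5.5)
The plan is to work in the second form of \eqref{Emdef}, with the increment variable $h$, and split the $h$-integral into the regions $|h|\le 1$ and $|h|>1$. On each region we first integrate in $x$ and bound the result using Cauchy--Schwarz together with elementary estimates on translation increments in $L^2$. Then we integrate in $h$ using polar coordinates $h=\rho\theta$. Since $m$ depends only on $\theta=h/|h|$, the angular integral produces exactly $\|m\|_{L^1(\mathbb S^{N-1})}$. The radial integrals converge because $s\in(0,1)$. We will in fact prove absolute convergence: the bound holds with $|m|$ and absolute values inside the integral. This also justifies the change of variables $y=x-h$ and the use of Fubini.

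For fixed $h$, the Cauchy--Schwarz inequality in $x$ gives
\[
\int_{\R^N}|u(x)-u(x-h)|\,|v(x)-v(x-h)|\,dx\le \|u-u(\cdot-h)\|_{L^2(\R^N)}\|v-v(\cdot-h)\|_{L^2(\R^N)}.
\]
We use two bounds on the increments.
\begin{itemize}
\item For $|h|\le1$: writing $u(x)-u(x-h)=\int_0^1\nabla u(x-th)\cdot h\,dt$ and applying Minkowski's integral inequality gives $\|u-u(\cdot-h)\|_{L^2}\le |h|\,\|\nabla u\|_{L^2}$.
\item For $|h|>1$: the triangle inequality gives $\|u-u(\cdot-h)\|_{L^2}\le 2\|u\|_{L^2}$.
\end{itemize}
The same bounds hold for $v$. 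Hence the near part is bounded by
\[
\frac{c_{N,s}}{2}\,\|\nabla u\|_{L^2}\|\nabla v\|_{L^2}\int_{|h|\le1}|h|^{2-N-2s}|m(h/|h|)|\,dh=\frac{c_{N,s}}{2}\,\|\nabla u\|_{L^2}\|\nabla v\|_{L^2}\,\|m\|_{L^1(\mathbb S^{N-1})}\int_0^1\rho^{1-2s}\,d\rho .
\]
The radial integral is finite since $2s<2$. The far part is bounded by
\[
2c_{N,s}\,\|u\|_{L^2}\|v\|_{L^2}\,\|m\|_{L^1(\mathbb S^{N-1})}\int_1^\infty\rho^{-1-2s}\,d\rho ,
\]
which is finite since $s>0$. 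Adding the two parts gives the claim with
\[
C=c_{N,s}\max\Bigl\{\frac{1}{2(2-2s)},\,\frac{1}{s}\Bigr\}.
\]

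The only point requiring some care is measurability and Tonelli for the function $(x,h)\mapsto m(h/|h|)$ when $m$ is merely $L^1$ on the sphere. We handle this by applying Tonelli to the nonnegative integrand first and then using the polar coordinates formula $\int_{\R^N}g(h)\,dh=\int_0^\infty\int_{\mathbb S^{N-1}}g(\rho\theta)\rho^{N-1}\,d\theta\,d\rho$. Once this is in place, the rest of the argument is routine.
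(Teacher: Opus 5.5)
Your proposal is correct and follows essentially the same route as the paper: split at $|h|=1$, apply Cauchy--Schwarz in $x$, use the increment bounds $|h|\,\|\nabla u\|_{L^2(\R^N)}$ (fundamental theorem of calculus plus Minkowski) and $2\|u\|_{L^2(\R^N)}$, then pass to polar coordinates to extract $\|m\|_{L^1(\mathbb S^{N-1})}$ times the convergent radial integrals $\int_0^1\rho^{1-2s}\,d\rho$ and $\int_1^{\infty}\rho^{-1-2s}\,d\rho$. Your explicit constant $C=c_{N,s}\max\{\frac{1}{2(2-2s)},\frac{1}{s}\}$ and your remark about applying Tonelli to the nonnegative integrand are both correct; the latter makes explicit a point the paper's proof leaves implicit.
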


\begin{proof}
We set $h\in\R^N$. By the Cauchy-Schwarz inequality, we get
\[
\int_{\R^N}|u(x)-u(x-h)|\,|v(x)-v(x-h)|\,dx\le\|u(\cdot)-u(\cdot-h)\|_{L^2(\R^N)}\|v(\cdot)-v(\cdot-h)\|_{L^2(\R^N)}.
\]
If $|h|\le 1$, then the fundamental theorem of calculus gives
\[
u(x)-u(x-h)=\int_0^1 h\cdot\nabla u(x-th)\,dt.
\]
Therefore,
\[
 \|u(\cdot)-u(\cdot-h)\|_{L^2(\R^N)}\le |h|\,\|\nabla u\|_{L^2(\R^N)},
\]
and the same estimate holds for $v$.  Hence,  passing to polar coordinates, we have 
\begin{equation}\label{pèkdowBH BJNKML}
\begin{split}
&\int_{|h|\le 1}\frac{|m(h/|h|)|}{|h|^{N+2s}}\int_{\R^N}|u(x)-u(x-h)|\,|v(x)-v(x-h)|\,dx\,dh\\
&\quad\le\|\nabla u\|_{L^2(\R^N)}\|\nabla v\|_{L^2(\R^N)}\int_{|h|\le 1}\left|m\left(\frac{h}{|h|}\right)\right|\,|h|^{2-N-2s}\,dh\\
&\quad=\|\nabla u\|_{L^2(\R^N)}\|\nabla v\|_{L^2(\R^N)} \int_{\mathbb S^{N-1}}|m(\omega)|\int_0^1 r^{2-N-2s}r^{N-1}\,dr\,d\omega\\
&\quad=\|\nabla u\|_{L^2(\R^N)}\|\nabla v\|_{L^2(\R^N)}\|m\|_{L^1(\mathbb S^{N-1})}\int_0^1 r^{1-2s}\,dr
<+\infty.
\end{split}
\end{equation}
If instead $|h|>1$, then
\[
\int_{\R^N}|u(x)-u(x-h)|\,|v(x)-v(x-h)|\,dx\le 4 \|u\|_{L^2(\R^N)}\|v\|_{L^2(\R^N)}.
\]
Thus, passing to polar coordinates, we have
\begin{equation}\label{esdr6tfygyhuijnkp}
\begin{split}
&\int_{|h|> 1}\frac{|m(h/|h|)|}{|h|^{N+2s}}\int_{\R^N}|u(x)-u(x-h)|\,|v(x)-v(x-h)|\,dx\,dh\\
&\quad\le 4 \|u\|_{L^2(\R^N)}\|v\|_{L^2(\R^N)} \int_{|h|> 1}\frac{|m(h/|h|)|}{|h|^{N+2s}}\, dh\\
&\quad\le 4 \|u\|_{L^2(\R^N)}\|v\|_{L^2(\R^N)} \|m\|_{L^1(\mathbb S^{N-1})}\int_1^{+\infty} r^{-1-2s}\,dr <+\infty.
\end{split}
\end{equation}
Thus, combining~\eqref{Emdef},~\eqref{pèkdowBH BJNKML} and~\eqref{esdr6tfygyhuijnkp}, we obtain the desired result.
\end{proof}

The next result will be useful to write~\eqref{Emdef} in Fourier variables.
\begin{lemma}\label{lemmafindiff}
For $u\in\mathcal S(\R^N)$ and~$h\in\R^N$, 
\begin{equation*}
\int_{\R^N}(u(x)-u(x-h))(v(x)-v(x-h))\,dx = 2\int_{\R^N}\left(1-\cos(h\cdot\xi)\right)\widehat{u}(\xi)\overline{\widehat{v}(\xi)}\,d\xi.
\end{equation*}
\end{lemma}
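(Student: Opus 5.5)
The identity follows from Plancherel's identity applied to the two translated differences. Here $v\in\mathcal S(\R^N)$ as well. Set $u_h(x):=u(x)-u(x-h)$ and $v_h(x):=v(x)-v(x-h)$; both are Schwartz functions. With the normalization fixed in the footnote, translation acts as
\[
\mathcal F(u(\cdot-h))(\xi)=e^{-ih\cdot\xi}\widehat u(\xi),
\]
so that
\[
\widehat{u_h}(\xi)=(1-e^{-ih\cdot\xi})\widehat u(\xi)\quad\mbox{ and }\quad \widehat{v_h}(\xi)=(1-e^{-ih\cdot\xi})\widehat v(\xi).
\]

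Since $u$ and $v$ are real-valued, we have $u_h v_h=u_h\overline{v_h}$, and Plancherel's identity gives
\[
\int_{\R^N}u_h(x)v_h(x)\,dx=\int_{\R^N}(1-e^{-ih\cdot\xi})\overline{(1-e^{-ih\cdot\xi})}\,\widehat u(\xi)\overline{\widehat v(\xi)}\,d\xi .
\]
It then remains to compute the multiplier:
\[
|1-e^{-ih\cdot\xi}|^2=(1-\cos(h\cdot\xi))^2+\sin^2(h\cdot\xi)=2\bigl(1-\cos(h\cdot\xi)\bigr).
\]
Substituting this into the previous display yields the claim.

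There is no real obstacle here. The only points needing care are, first, that $u,v$ are real (as in the paper's setting), so that the conjugation in Plancherel's identity is harmless, and second, that the sign convention of the Fourier transform is consistent with the translation rule. Neither affects the final result, because the multiplier $|1-e^{-ih\cdot\xi}|^2$ is even in $h$.
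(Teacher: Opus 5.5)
Your proposal is correct and follows essentially the same route as the paper: the translation rule $\mathcal F(u(\cdot)-u(\cdot-h))(\xi)=(1-e^{-ih\cdot\xi})\widehat u(\xi)$, then Plancherel's identity, then $|1-e^{-it}|^2=2(1-\cos t)$. You also state explicitly that $u,v$ are real-valued, so that $v_h=\overline{v_h}$; the paper uses this silently.
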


\begin{proof}
We recall that
\[
\mathcal F(u(\cdot)-u(\cdot-h))(\xi)=\bigl(1-e^{-ih\cdot\xi}\bigr)\mathcal F(u)(\xi).
\]
The desired result then follows
by applying
Plancherel's identity and using that $|1-e^{-it}|^2=2(1-\cos t)$ for any~$t\in\R$.
\end{proof}

Furthermore, we observe the following:
\begin{lemma}\label{lemmint}
Let $s\in (0, 1)$. Then, 
\begin{equation}\label{kappasconstant}
\kappa_s:=\int_0^{+\infty}\frac{1-\cos r}{r^{1+2s}}\,dr
\end{equation}
is finite and positive. 

Moreover, for any $t\in\R$, 
\begin{equation}\label{radscaling}
 \int_0^{+\infty}\frac{1-\cos(rt)}{r^{1+2s}}\,dr
 =\kappa_s|t|^{2s}.
\end{equation}
\end{lemma}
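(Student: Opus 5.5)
The plan is to treat the two claims of the lemma in turn: first show that $\kappa_s$ is finite and positive, then deduce \eqref{radscaling} by a change of variables.

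\emph{Finiteness and positivity of $\kappa_s$.} I would split the integral in \eqref{kappasconstant} at $r=1$.

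On $(0,1]$, the elementary inequality $0\le 1-\cos r\le r^2/2$ (from Taylor's theorem, or from $1-\cos r=2\sin^2(r/2)$) bounds the integrand by $\frac12 r^{1-2s}$. This is integrable near $0$ because $1-2s>-1$.

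On $(1,+\infty)$, I would use $0\le 1-\cos r\le 2$, which bounds the integrand by $2r^{-1-2s}$. This is integrable at infinity because $2s>0$.

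Hence $\kappa_s$ is finite. The integrand is nonnegative and continuous on $(0,+\infty)$. It is strictly positive except on the discrete set $2\pi\N$, so, for instance, it is strictly positive on $(0,2\pi)$. Therefore $\kappa_s>0$.

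\emph{The scaling identity \eqref{radscaling}.} I would split into cases according to the value of $t$.

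If $t=0$, both sides of \eqref{radscaling} vanish.

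If $t\neq0$, the map $r\mapsto 1-\cos(rt)$ is even in $t$, so I may replace $t$ by $|t|$. I then substitute $\rho:=r|t|$, so that $dr=d\rho/|t|$ and $r^{-1-2s}=|t|^{1+2s}\rho^{-1-2s}$. This gives
\[
\int_0^{+\infty}\frac{1-\cos(r|t|)}{r^{1+2s}}\,dr=|t|^{1+2s}\,|t|^{-1}\int_0^{+\infty}\frac{1-\cos\rho}{\rho^{1+2s}}\,d\rho=\kappa_s|t|^{2s}.
\]
The change of variables is legitimate for nonnegative measurable integrands, or equally by the absolute convergence already established.

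There is no real obstacle here. The only point requiring care is the local integrability near $r=0$, which needs the quadratic vanishing of $1-\cos r$ together with $s<1$; the tail, in turn, needs $s>0$.
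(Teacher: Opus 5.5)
Your proof is correct and follows essentially the same route as the paper: you split at $r=1$ with the bounds $0\le 1-\cos r\le r^2/2$ and $0\le 1-\cos r\le 2$, get positivity because the integrand vanishes only at isolated points, and prove the scaling identity via the substitution $\rho=|t|r$ together with the evenness of cosine. Nothing is missing.
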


\begin{proof}
Since
\[
1-\cos r=2\sin^2 \left(\frac r2\right),
\]
and~$|\sin t|\le |t|$ for any $t\in\R$, we obtain that $0\le 1-\cos r\le \frac{r^2}{2}$. Hence, 
\[
0\le\frac{1-\cos r}{r^{1+2s}}\le\frac{r^{1-2s}}{2}.
\]
Since $s\in (0, 1)$, the right-hand side is integrable on $(0,1)$.

Moreover,  $0\le 1-\cos r\le2$ for $r>0$ and therefore
\[
0\le\frac{1-\cos r}{r^{1+2s}}\le\frac{2}{r^{1+2s}}.
\]
Since $s\in (0, 1)$, the right hand side is integrable on $(1, +\infty)$.
Therefore,  it follows that~$\kappa_s<+\infty$. Furthermore, since $1-\cos r\ge0$, and it is strictly positive for every~$r\notin2\pi\mathbb Z$, we infer that $\kappa_s>0$.

We now prove~\eqref{radscaling}.
If $t=0$, then~\eqref{radscaling} plainly follows.  If $t\ne0$, then the change of variable $\rho:=|t|r$ yields
\[
\int_0^{+\infty}\frac{1-\cos(rt)}{r^{1+2s}}\,dr=|t|^{2s}\int_0^{+\infty}\frac{1-\cos(\rho\,\mathrm{sgn}(t))}
{\rho^{1+2s}}\,d\rho =|t|^{2s}\int_0^{+\infty}\frac{1-\cos\rho}{\rho^{1+2s}}\,d\rho=\kappa_s|t|^{2s},
\]
as desired.
\end{proof}

Combining Lemmata \ref{lemmafindiff} and~\ref{lemmint}, we obtain an explicit expression for the Fourier representation of the bilinear form defined in~\eqref{Emdef}.
\begin{proposition}\label{propEmsymbol}
For $u,v\in\mathcal S(\R^N)$, we have that
\[
\mathcal{E}_m(u,v)=\int_{\R^N}\psi_m(\xi)\,\widehat{u}(\xi)\,\overline{\widehat{v}(\xi)}\, d\xi,
\]
where
\begin{equation}\label{lòkjhgfdtresdtfyguhijokpl}
\psi_m(\xi):=
\begin{cases}\displaystyle
c_{N,s}\kappa_s|\xi|^{2s}\int_{\mathbb S^{N-1}}m(\theta)\left|\frac{\xi}{|\xi|}\cdot\theta\right|^{2s}\,d\theta &{\mbox{ if~$\xi\neq 0$,}}\\
0 &{\mbox{ if~$\xi=0$,}}\end{cases}
\end{equation}
being $\kappa_s$ as in~\eqref{kappasconstant}.
\end{proposition}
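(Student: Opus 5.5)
We start from the second expression in \eqref{Emdef}, which writes the form in the variables $(x,h)$ with $h=x-y$. Lemma~\ref{lemmabsconv} tells us the double integral converges absolutely, so Fubini applies and we may integrate in $x$ first for each fixed $h\neq0$. Lemma~\ref{lemmafindiff} then evaluates the inner integral:
\[
\mathcal{E}_m(u,v)=c_{N,s}\int_{\R^N}\frac{m(h/|h|)}{|h|^{N+2s}}\int_{\R^N}\bigl(1-\cos(h\cdot\xi)\bigr)\widehat u(\xi)\overline{\widehat v(\xi)}\,d\xi\,dh .
\]
The plan is to exchange the $h$ and $\xi$ integrations and compute the $h$-integral explicitly.

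\textbf{Justifying the exchange.} This is the step that needs care. We bound the absolute value of the integrand by
\[
|m(h/|h|)|\,|h|^{-N-2s}\min\{|h|^2|\xi|^2/2,\,2\}\,|\widehat u(\xi)||\widehat v(\xi)| .
\]
In polar coordinates $h=r\theta$, the $h$-integral of this bound is at most
\[
\|m\|_{L^1(\mathbb S^{N-1})}\int_0^{+\infty}r^{-1-2s}\min\{r^2|\xi|^2/2,\,2\}\,dr = C\|m\|_{L^1}|\xi|^{2s}.
\]
Since $u,v\in\mathcal S(\R^N)$, the function $|\xi|^{2s}|\widehat u||\widehat v|$ is integrable. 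Tonelli then gives absolute integrability on $\R^N\times\R^N$, and Fubini allows the swap.

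\textbf{Computing the $h$-integral.} For fixed $\xi\ne0$, write $h=r\theta$ with $dh=r^{N-1}dr\,d\theta$. This gives
\[
\int_{\mathbb S^{N-1}}m(\theta)\int_0^{+\infty}\frac{1-\cos(r\,\theta\cdot\xi)}{r^{1+2s}}\,dr\,d\theta .
\]
Applying \eqref{radscaling} of Lemma~\ref{lemmint} with $t=\theta\cdot\xi$, the inner integral equals $\kappa_s|\theta\cdot\xi|^{2s}$. Factoring $|\xi|^{2s}$ out of $|\theta\cdot\xi|^{2s}$ gives exactly $\psi_m(\xi)$ as in \eqref{lòkjhgfdtresdtfyguhijokpl}. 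At $\xi=0$ the integrand $1-\cos 0$ vanishes, consistent with $\psi_m(0)=0$; in any case $\{\xi=0\}$ is a null set.

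The symmetry of $m$ is not needed for this computation. The only genuine obstacle is the Fubini justification above, and the $\min$ bound handles it.
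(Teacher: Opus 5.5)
Your proof is correct and follows the same route as the paper: Lemma~\ref{lemmafindiff} for the inner $x$-integral, exchange of the $h$- and $\xi$-integrations, polar coordinates $h=r\theta$, and \eqref{radscaling} with $t=\theta\cdot\xi$. The only difference is in justifying the exchange, and there your dominating bound $|1-\cos(h\cdot\xi)|\le\min\{|h|^2|\xi|^2/2,\,2\}$, integrated to $C\|m\|_{L^1(\mathbb S^{N-1})}|\xi|^{2s}$, is more careful than the paper's appeal to Lemma~\ref{lemmabsconv}: absolute convergence in $(x,h)$ does not by itself give absolute convergence in $(h,\xi)$, because Plancherel does not commute with taking absolute values.
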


\begin{proof}
By Lemma~\ref{lemmafindiff}, we obtain that
\[
\begin{split}
\mathcal{E}_m(u,v) &= \frac{c_{N,s}}{2}\int_{\R^N}\frac{m(h/|h|)}{|h|^{N+2s}}\int_{\R^N}(u(x)-u(x-h))(v(x)-v(x-h)) \,dx \,dh\\
&= c_{N,s}\int_{\R^N}\frac{m(h/|h|)}{|h|^{N+2s}}\int_{\R^N}(1-\cos(h\cdot\xi))\,\widehat{u}(\xi)\,\overline{\widehat{v}(\xi)}\, d\xi\, dh.
\end{split}
\]
We point out that the change of the order of integration is justified by Fubini's theorem. Indeed,
Lemma~\ref{lemmabsconv}, applied with $|m|$, gives the absolute convergence of the original double integral, and the above Fourier representation preserves this integrability. 

Hence, 
\[
\mathcal{E}_m(u,v) = c_{N,s}\int_{\R^N}\left(\;\int_{\R^N}\frac{m(h/|h|)}{|h|^{N+2s}}(1-\cos(h\cdot\xi))\,dh\right)\widehat{u}(\xi)\,\overline{\widehat{v}(\xi)}\,d\xi.
\]

We now use polar coordinate and write $h:=r\theta$, with $r>0$ and~$\theta\in\mathbb S^{N-1}$. Then,  by Lemma~\ref{lemmint}, we infer that
\[
c_{N,s}\int_{\mathbb S^{N-1}}m(\theta)\int_0^{+\infty} \frac{1-\cos(r\theta\cdot\xi)}{r^{1+2s}}\,dr\,d\theta = c_{N,s}\kappa_s\int_{\mathbb S^{N-1}} m(\theta)|\theta\cdot\xi|^{2s}\,d\theta.
\]
Thus, recalling the definition of~$\psi_m$ in~\eqref{lòkjhgfdtresdtfyguhijokpl} we obtain the desired result.
\end{proof}

We introduce the following quantity, which will provide the normalization constant in the Fourier representation in Proposition~\ref{propEmsymbol}.
\begin{lemma}\label{lemmauguale1}
For $\omega\in\mathbb S^{N-1}$, define
\begin{equation}\label{JNsdefn}
J_{N,s}(\omega):=\int_{\mathbb S^{N-1}}|\omega\cdot\theta|^{2s}\,d\theta.
\end{equation}
Then, $J_{N,s}(\omega)=J_{N,s}$ 
is independent of the direction~$\omega$
and it is strictly positive. 

Moreover,
\begin{equation}\label{xsrdcftvgbhnjkm,lkmjnbhgv}
 c_{N,s}\kappa_sJ_{N,s}=1,
\end{equation}
being $\kappa_s$ as in~\eqref{kappasconstant}.
\end{lemma}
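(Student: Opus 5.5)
We prove the two claims of Lemma~\ref{lemmauguale1} separately: first that $J_{N,s}(\omega)$ does not depend on $\omega$, and then the normalization~\eqref{xsrdcftvgbhnjkm,lkmjnbhgv}.

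\emph{Independence of $\omega$ and positivity.} Given $\omega,\omega'\in\mathbb S^{N-1}$, we choose an orthogonal matrix $R$ with $R\omega=\omega'$. The surface measure on $\mathbb S^{N-1}$ is invariant under orthogonal maps, so the change of variables $\theta=R\theta'$ in~\eqref{JNsdefn} gives
\[
J_{N,s}(\omega')=\int_{\mathbb S^{N-1}}|R\omega\cdot R\theta'|^{2s}\,d\theta'=J_{N,s}(\omega).
\]
For $N=1$ the sphere is $\{\pm1\}$ and $J_{1,s}=2$ directly. Positivity holds because the integrand is continuous, nonnegative, and equal to $1$ at $\theta=\omega$, so it is strictly positive on a neighborhood of $\omega$ of positive measure. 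Finiteness is clear, since the integrand is bounded by $1$.

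\emph{Normalization.} We avoid computing $\kappa_s$ and $J_{N,s}$ explicitly, which would require Gamma-function identities. Instead we compare two representations of $\mathcal E_1$.
\begin{itemize}
\item[(a)] Proposition~\ref{propEmsymbol} with $m\equiv1$ and the first part of the lemma give
\[
\mathcal E_1(u,v)=c_{N,s}\kappa_sJ_{N,s}\int_{\R^N}|\xi|^{2s}\,\widehat u(\xi)\,\overline{\widehat v(\xi)}\,d\xi .
\]
\item[(b)] Lemma~\ref{prop:E1normalization} together with Proposition~\ref{BAsymbol} for $A=\mathrm{Id}$, where $p_{\mathrm{Id}}(\xi)=|\xi|^{2s}$, give
\[
\mathcal E_1(u,v)=\int_{\R^N}|\xi|^{2s}\,\widehat u(\xi)\,\overline{\widehat v(\xi)}\,d\xi .
\]
\end{itemize}
Subtracting (b) from (a), we get $(c_{N,s}\kappa_sJ_{N,s}-1)\int|\xi|^{2s}|\widehat u|^2\,d\xi=0$ for every $u\in\mathcal S(\R^N)$, after taking $u=v$. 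Choosing $u$ to be a Gaussian makes the integral strictly positive, so $c_{N,s}\kappa_sJ_{N,s}=1$.

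\emph{Main obstacle.} The risk is circularity: the proof of Lemma~\ref{prop:E1normalization} in the appendix might itself rely on~\eqref{xsrdcftvgbhnjkm,lkmjnbhgv}. If it does, I would prove~\eqref{xsrdcftvgbhnjkm,lkmjnbhgv} directly, in three steps.
\begin{itemize}
\item[1.] Compute $J_{N,s}$ by taking $\omega=e_N$ and using the slice formula $\int_{\mathbb S^{N-1}}f(\theta_N)\,d\theta=|\mathbb S^{N-2}|\int_{-1}^1 f(t)(1-t^2)^{\frac{N-3}2}\,dt$. This reduces $J_{N,s}$ to a Beta function, $J_{N,s}=2\pi^{\frac{N-1}2}\Gamma(s+\frac12)/\Gamma(\frac N2+s)$.
\item[2.] Compute $\kappa_s$ by integrating by parts (twice if needed) and using $\int_0^\infty r^{-a}\sin r\,dr$. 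This gives $\kappa_s=\frac{\Gamma(1-s)}{2s\,2^{2s}}\cdot\frac{\sqrt\pi}{\Gamma(s+\frac12)}$, written equivalently via the duplication formula.
\item[3.] Multiply by $c_{N,s}$ and simplify with the duplication and reflection formulas.
\end{itemize}
Getting the constants exactly right in this direct route is the delicate part, so the comparison argument above is the approach to use whenever it is not circular.
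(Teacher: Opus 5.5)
Your proposal is correct and follows the paper's proof essentially verbatim. You use rotation invariance of the surface measure for independence of $\omega$ and positivity of the integrand near $\theta=\omega$; the normalization then comes from comparing $\psi_1(\xi)=c_{N,s}\kappa_sJ_{N,s}|\xi|^{2s}$ with $\mathcal B_I=\mathcal E_1$ via Lemma~\ref{prop:E1normalization} and Proposition~\ref{BAsymbol}. The circularity you worry about does not occur: the appendix proves Lemma~\ref{prop:E1normalization} by computing $\int_{\R^N}(1-\cos(h\cdot\xi))|h|^{-N-2s}\,dh=|\xi|^{2s}/c_{N,s}$ directly from Gamma-function identities, so your fallback route, whose constants are in fact correct, is not needed.
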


\begin{proof}
We first show that $J_{N,s}$ is independent of the choice of $\omega\in\mathbb S^{N-1}$. To this end, let~$\omega_1,\omega_2\in\mathbb S^{N-1}$. Hence, there exists an orthogonal matrix
$R\in O(N)$ such that $R\omega_1=\omega_2$. Using the invariance of the
surface measure on $\mathbb S^{N-1}$ under rotations, we obtain that
\[
\int_{\mathbb S^{N-1}}|\omega_2\cdot\theta|^{2s}\,d\theta = \int_{\mathbb S^{N-1}}|R\omega_1\cdot\theta|^{2s}\,d\theta = \int_{\mathbb S^{N-1}}|\omega_1\cdot R^T\theta|^{2s}\,d\theta = \int_{\mathbb S^{N-1}}|\omega_1\cdot\theta|^{2s}\,d\theta.
\]
This shows that $J_{N,s}$ does not depend on $\omega$.

Moreover,  the integrand is nonnegative and not identically zero on $\mathbb S^{N-1}$, since $|\omega\cdot\theta|^{2s}>0$ for every~$\theta$ in a neighborhood of $\omega$. Therefore, $J_{N,s}>0$.

It only remains to prove~\eqref{xsrdcftvgbhnjkm,lkmjnbhgv}. From~\eqref{lòkjhgfdtresdtfyguhijokpl}, used here with $m\equiv 1$, we have that, for any~$\xi\in\R^N$ with $\xi\ne 0$,
\[
\psi_1(\xi) = c_{N,s}\kappa_s|\xi|^{2s}\int_{\mathbb S^{N-1}}\left|\frac{\xi}{|\xi|}\cdot\theta\right|^{2s}\,d\theta.
\]
By the independence of $J_{N,s}$ from the direction, we can write
\[
\psi_1(\xi) = c_{N,s}\kappa_sJ_{N,s}|\xi|^{2s}.
\]

Accordingly, the normalization in~\eqref{E1normalization} and Proposition~\ref{BAsymbol}, used here with $A:=I$, show that
\begin{eqnarray*}&&
\int_{\R^N} |\xi|^{2s}\,\widehat{u}(\xi)\overline{\widehat{v}(\xi)}\,d\xi = \mathcal{B}_{I}(u,v) =  \mathcal{E}_1(u,v) \\&&\qquad= \int_{\R^N}\psi_1(\xi)\widehat{u}(\xi)\overline{\widehat{v}(\xi)}\, d\xi
=
c_{N,s}\kappa_sJ_{N,s}\int_{\R^N} |\xi|^{2s}\,\widehat{u}(\xi)\overline{\widehat{v}(\xi)}\,d\xi ,
\end{eqnarray*}
which entails that~$c_{N,s}\kappa_sJ_{N,s}=1$, as desired.
\end{proof}

By combining Proposition~\ref{propEmsymbol} and Lemma~\ref{lemmauguale1}, we infer that, for any~$\xi\in\R^N$ 
with~$\xi\ne 0$, we can write
\begin{equation}\label{psinormalized}
\psi_m(\xi)=|\xi|^{2s} \ \dfrac{\displaystyle\int_{\mathbb S^{N-1}}m(\theta)\left|\frac{\xi}{|\xi|}\cdot\theta\right|^{2s}\,d\theta}{\displaystyle\int_{\mathbb S^{N-1}}\left|\frac{\xi}{|\xi|}\cdot\theta\right|^{2s}\,d\theta}.
\end{equation}

The following identity will allow us to relate the integral expression arising in~\eqref{psinormalized} to the trace of the matrix~$A$ and the quadratic form associated with~$A$.

\begin{lemma}\label{lemmaspherical}
Let $A\in\R^{N\times N}$ be symmetric and let $\omega\in\mathbb S^{N-1}$.  Then,
\begin{equation}\label{lsqkdJNCHUBSAKMOXxcsdjbhvhn}
 \frac{\displaystyle\int_{\mathbb S^{N-1}}(\theta\cdot A\theta)|\omega\cdot\theta|^{2s}\,d\theta}{\displaystyle\int_{\mathbb S^{N-1}}|\omega\cdot\theta|^{2s}\,d\theta}=\frac{\tr(A)+2s\,\omega\cdot A\omega}{N+2s}.
\end{equation}
\end{lemma}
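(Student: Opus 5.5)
By rotation invariance of the surface measure (exactly as in the proof of Lemma~\ref{lemmauguale1}), we may work in a frame adapted to $\omega$. Both sides of \eqref{lsqkdJNCHUBSAKMOXxcsdjbhvhn} are linear in $A$. So the plan is to reduce to computing the matrix
\[
M_{jk}(\omega):=\frac{1}{J_{N,s}}\int_{\mathbb S^{N-1}}\theta_j\theta_k|\omega\cdot\theta|^{2s}\,d\theta,
\]
since the left hand side equals $\sum_{j,k}A_{jk}M_{jk}(\omega)=\tr(AM(\omega))$. It then suffices to prove that
\[
M(\omega)=\frac{1}{N+2s}\bigl(I+2s\,\omega\otimes\omega\bigr),
\]
because $\tr(A(I+2s\,\omega\otimes\omega))=\tr(A)+2s\,\omega\cdot A\omega$.

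\textbf{Step 1: structure of $M(\omega)$.} For every $R\in O(N)$ fixing $\omega$, the change of variables $\theta\mapsto R\theta$ gives $RM(\omega)R^T=M(\omega)$. Since $M(\omega)$ is symmetric and commutes with all rotations fixing $\omega$, it must have the form $M(\omega)=a\,I+b\,\omega\otimes\omega$. For $N=1$ this is trivial, and reflections fixing $\omega$ handle the off-diagonal entries in general. Two scalar identities then determine $a$ and $b$:
\[
\tr M(\omega)=Na+b,\qquad \omega\cdot M(\omega)\omega=a+b.
\]
The first quantity equals $1$, because $|\theta|^2=1$ on the sphere, so its integral is just $J_{N,s}$ divided by $J_{N,s}$.

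\textbf{Step 2: the key relation.} The crux is the identity
\[
\int_{\mathbb S^{N-1}}|\omega\cdot\theta|^{2s+2}\,d\theta=\frac{2s+1}{N+2s}\,J_{N,s}.
\]
Granting it, $a+b=\frac{2s+1}{N+2s}$ and $Na+b=1$. Solving gives $a=\frac{1}{N+2s}$ and $b=\frac{2s}{N+2s}$, which is exactly the claimed form of $M(\omega)$.

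\textbf{Step 3: proving the key relation.} This is the main obstacle. I would extend to $\R^N$ with a Gaussian weight, using homogeneity and polar coordinates. For any $\gamma>-1$,
\[
\int_{\R^N}|x_1|^{\gamma}e^{-|x|^2/2}\,dx=\Bigl(\int_0^\infty r^{\gamma+N-1}e^{-r^2/2}dr\Bigr)\int_{\mathbb S^{N-1}}|\theta_1|^\gamma\,d\theta .
\]
The left hand side factors as $(2\pi)^{(N-1)/2}\int_\R|t|^\gamma e^{-t^2/2}dt$. The one-dimensional integral scales like $\Gamma(\frac{\gamma+1}{2})$ up to $2^{\gamma/2}$ factors, and the radial integral like $\Gamma(\frac{\gamma+N}{2})$ up to the same type of factors.

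Take the ratio of the cases $\gamma=2s+2$ and $\gamma=2s$, using $\Gamma(z+1)=z\Gamma(z)$. The powers of $2$ cancel, and the ratio is
\[
\frac{(2s+1)/2}{(2s+N)/2}=\frac{2s+1}{N+2s}.
\]
Alternatively, one can integrate by parts on the sphere, i.e.\ apply the divergence theorem on the unit ball to the field $|x_1|^{2s}x_1\,e_1$ in its homogeneous extension. The Gaussian route is cleaner, and I would use it. Since $\omega$ may be rotated to $e_1$, this proves the relation and hence the lemma.
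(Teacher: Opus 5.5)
Your proof is correct, and its skeleton is the paper's. The paper also reduces the left-hand side to $\alpha\,\tr(A)+\beta\,\omega\cdot A\omega$ by symmetry. It works at $\omega=e_1$: coordinate reflections kill the off-diagonal moments, permutations of $\theta_2,\dots,\theta_N$ equalize the remaining ones, and $A$ is then conjugated by $R_\omega$ with $R_\omega e_1=\omega$. It fixes $\alpha,\beta$ by testing $A=I$ and $A=e_1\otimes e_1$, as you do. Your stabilizer argument for $M(\omega)$ is a coordinate-free version of the same step. For $N=2$ the only nontrivial element of the stabilizer is a reflection, so working in $O(N)$ rather than with rotations is essential, as you indicate.

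The real difference is how the key moment identity $\int_{\mathbb S^{N-1}}|\theta_1|^{2s+2}d\theta=\frac{2s+1}{N+2s}J_{N,s}$ is proved.
\begin{itemize}
\item The paper slices the sphere along $e_1$, writing $\int_{\mathbb S^{N-1}}|\theta_1|^{\gamma}d\theta=|\mathbb S^{N-2}|\int_{-1}^{1}|t|^{\gamma}(1-t^2)^{\frac{N-3}{2}}dt$, and integrates by parts with $f(t)=t^{2s+1}(1-t^2)^{\frac{N-1}{2}}$.
\item Your Gaussian computation turns each moment into a quotient of Gamma functions and concludes via $\Gamma(z+1)=z\Gamma(z)$; the powers of $2$ do cancel as you claim.
\end{itemize}
Your route also yields the closed form $J_{N,s}=2\pi^{\frac{N-1}{2}}\Gamma(s+\frac12)/\Gamma(s+\frac N2)$. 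It works verbatim for $N=1$ (counting measure on $\mathbb S^0$), a case the paper must treat separately. The paper's route stays within one-variable calculus.

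The divergence-theorem alternative you mention is essentially the paper's argument in coordinate-free form: slicing $\int_{B_1}\operatorname{div}(|x_1|^{2s}x_1e_1)\,dx$ along $x_1$ reproduces exactly the identity the paper obtains from $f$.

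One cosmetic remark: for $N=1$ your two equations $Na+b=1$ and $a+b=\frac{2s+1}{N+2s}$ coincide, so $a$ and $b$ are not individually determined. Since only $a+b=M(\omega)=1$ enters there, the conclusion is unaffected.
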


\begin{proof}
We first consider the case $N=1$. In this case, $A=A_{11}$ and $\mathbb S^0=\{-1,1\}$. Hence, for every~$\omega,\theta\in\mathbb S^0$, we have $|\omega\theta|=1$ and $\theta\cdot A\theta=A_{11}$. Therefore,
\[
\frac{\displaystyle\int_{\mathbb S^0}(\theta\cdot A\theta)
|\omega\theta|^{2s}\,d\theta}
{\displaystyle\int_{\mathbb S^0}|\omega\theta|^{2s}\,d\theta}
=A_{11}.
\]
Moreover, since $|\omega|^2=1$, we have
\[
\frac{\tr(A)+2s\,\omega\cdot A\omega}{1+2s} = \frac{A_{11}+2sA_{11}}{1+2s} =A_{11}. 
\] The last two displays establish
\eqref{lsqkdJNCHUBSAKMOXxcsdjbhvhn} when $N=1$.

From now on we suppose that $N\ge 2$. Let $\omega\in\mathbb S^{N-1}$ and define
\begin{equation}\label{dretfyghuijoklpèòlppkoijuhgy}
F_\omega(A):=\int_{\mathbb S^{N-1}}(\theta\cdot A\theta)|\omega\cdot\theta|^{2s}\,d\theta =\sum_{i,j=1}^N A_{ij}\int_{\mathbb S^{N-1}}\theta_i\theta_j|\omega\cdot\theta|^{2s}\,d\theta.
\end{equation}
We claim that
\begin{equation}\label{pèkljniopjhgvftyrdszdwez}
F_\omega(A) = \frac{J_{N,s}}{N+2s}\left(\tr(A)+2s\,\omega\cdot A\omega\right).
\end{equation}
We first prove~\eqref{pèkljniopjhgvftyrdszdwez} in the case $\omega=e_1$.  By~\eqref{dretfyghuijoklpèòlppkoijuhgy},  we infer that
\begin{equation}\label{opkjoikjnio0ijohutfde4eswe4e5dstyh}
F_{e_1}(A)=\int_{\mathbb S^{N-1}}(\theta\cdot A\theta)|\theta_1|^{2s}\,d\theta =\sum_{i,j=1}^N A_{ij}\int_{\mathbb S^{N-1}}\theta_i\theta_j|\theta_1|^{2s}\,d\theta.
\end{equation}

Let $i\ne j$. We claim that
\begin{equation}\label{,lkjnhbgvfctdxrctfvgbhnm}
\int_{\mathbb S^{N-1}}\theta_i\theta_j|\theta_1|^{2s}\,d\theta=0.
\end{equation}
Indeed, since $i\neq j$, at least one of the two indices $i$ and~$j$ is different from $1$. Let $k\in\{i,j\}$ be such an index, and consider the reflection
\[
R_k(\theta_1,\dots,\theta_k,\dots,\theta_N) = (\theta_1,\dots,-\theta_k,\dots,\theta_N).
\]
The map $R_k$ preserves both the sphere and the surface measure. Moreover, since $k\ne 1$, we have $|(R_k\theta)_1 |^{2s}=|\theta_1|^{2s}$.
On the other hand, since exactly one of the two factors $\theta_i,\theta_j$ changes its sign, it follows that
\[
(R_k\theta)_i(R_k\theta)_j=-\theta_i\theta_j.
\]
Therefore, by using the change of variable $\theta\mapsto R_k\theta$, we get
\[
\int_{\mathbb S^{N-1}}\theta_i\theta_j|\theta_1|^{2s}\,d\theta = \int_{\mathbb S^{N-1}}(R_k\theta)_i(R_k\theta)_j
|(R_k\theta)_1|^{2s}\,d\theta = -\int_{\mathbb S^{N-1}}\theta_i\theta_j|\theta_1|^{2s}\,d\theta.
\]
This proves the claim in~\eqref{,lkjnhbgvfctdxrctfvgbhnm}.

For the case $i=j$, we set
\[
a:=\int_{\mathbb S^{N-1}}\theta_1^2|\theta_1|^{2s} d\theta.
\]
Moreover, we notice that the coordinates $\theta_2,\dots,\theta_N$ play a symmetric role. Indeed,  any permutation of these coordinates preserves the sphere, the surface measure, and the weight $|\theta_1|^{2s}$. Hence,
\[
\int_{\mathbb S^{N-1}}\theta_2^2|\theta_1|^{2s}\,d\theta = \cdots = \int_{\mathbb S^{N-1}}\theta_N^2|\theta_1|^{2s}\,d\theta=:b .
\]
Combining this with~\eqref{opkjoikjnio0ijohutfde4eswe4e5dstyh} and~\eqref{,lkjnhbgvfctdxrctfvgbhnm}, we obtain that
\begin{equation}\label{nbhgvfcdxcfvgybhnjm}
F_{e_1}(A) = \sum_{i=1}^N A_{ii}\int_{\mathbb S^{N-1}}\theta_i^2|\theta_1|^{2s}\,d\theta = a A_{11}+b\sum_{i=2}^N A_{ii}.
\end{equation}

Now, since
\[
A_{11}=e_1\cdot A e_1\quad\mbox{ and }\quad\sum_{i=2}^N A_{ii}=\tr(A)-e_1\cdot A e_1,
\]
we can write~\eqref{nbhgvfcdxcfvgybhnjm} as
\[
F_{e_1}(A) = b\,\tr(A)+(a-b)\,e_1\cdot A e_1.
\]
Equivalently,
\begin{equation}\label{dfvgbhnjkml,vhbnjkm,}
F_{e_1}(A) = \alpha\tr(A)+\beta\,e_1\cdot A e_1,
\end{equation}
where $\alpha:=b$ and~$\beta:=a-b$ depend only on $N$ and~$s$. 

We now determine $\alpha$ and~$\beta$. Recalling~\eqref{JNsdefn}, since $\theta\in\mathbb S^{N-1}$, we get
\begin{equation}\label{è+òplkoijhugyftrdeswasrdtfyguhj}
J_{N,s} = F_{e_1}(I) = N\alpha+\beta.
\end{equation}
On the other hand,  choosing~$A$ to be the matrix with entries $A_{ij}=\delta_{i, 1}\delta_{j, 1}$,  where $\delta_{ij}$ denotes the Kronecker delta. Thus, we have $\theta\cdot A\theta = |\theta_1|^2$, and hence
\begin{equation}\label{qaw3edr5tghyujkolò}
F_{e_1}(A) = \int_{\mathbb S^{N-1}}|\theta_1|^{2s+2}\,d\theta = \alpha+\beta.
\end{equation}
It only remains to compute the latter integral. By using spherical coordinates with respect to the
axis $e_1$, we have
\begin{equation}\label{xsdcfvgbhnjkml,.}
\begin{split}
&J_{N,s} = |\mathbb S^{N-2}|\int_{-1}^{1}|t|^{2s}(1-t^2)^{\frac{N-3}{2}}\,dt,\\
\mbox{ and }\quad&\int_{\mathbb S^{N-1}}|\theta_1|^{2s+2}\,d\theta = |\mathbb S^{N-2}|\int_{-1}^{1}|t|^{2s+2}(1-t^2)^{\frac{N-3}{2}}\,dt.
\end{split}
\end{equation}
By symmetry, it is enough to consider the integrals on $(0,1)$.  We set
\[
f(t):=t^{2s+1}(1-t^2)^{\frac{N-1}{2}}.
\]
We notice that $f(0)=f(1)=0$, and~$f'\in L^1(0,1)$. Thus, the Fundamental Theorem of Calculus yields
\[
0=f(1)-f(0)=\int_0^1 f'(t)\,dt = (2s+1)\int_0^1 t^{2s}(1-t^2)^{\frac{N-1}{2}}\,dt -(N-1)\int_0^1 t^{2s+2}(1-t^2)^{\frac{N-3}{2}}\,dt.
\]
By using the identity
\[
(1-t^2)^{\frac{N-1}{2}}=(1-t^2)^{\frac{N-3}{2}}-t^2(1-t^2)^{\frac{N-3}{2}},
\]
we conclude that
\[
(2s+1)\int_0^1 t^{2s}(1-t^2)^{\frac{N-3}{2}}\,dt = (2s+N)\int_0^1 t^{2s+2}(1-t^2)^{\frac{N-3}{2}}\,dt.
\]
Accordingly, by~\eqref{xsdcfvgbhnjkml,.}, we get
\[
\int_{\mathbb S^{N-1}}|\theta_1|^{2s+2}\,d\theta = \frac{2s+1}{N+2s}J_{N,s}.
\]
Combining this with~\eqref{qaw3edr5tghyujkolò},  we have that
\begin{equation}\label{lpojkhiu8ytfdrewaewsdrftgyhj}
\alpha+\beta=\frac{2s+1}{N+2s}J_{N,s}.
\end{equation}
Now, subctracting~\eqref{lpojkhiu8ytfdrewaewsdrftgyhj} from~\eqref{è+òplkoijhugyftrdeswasrdtfyguhj}, we get
\[
(N-1)\alpha = \frac{N-1}{N+2s}J_{N,s},
\]
namely
\[
\alpha=\frac{J_{N,s}}{N+2s}\quad\mbox{ and }\quad \beta = J_{N,s}-N\alpha = \frac{2s J_{N,s}}{N+2s}.
\]
Substituting these values in~\eqref{dfvgbhnjkml,vhbnjkm,}, we get
\begin{equation}\label{xsdcfvgrbtdhnyjhtbgrsvfdcsxa}
F_{e_1}(A) = \frac{J_{N,s}}{N+2s}\left(\tr(A)+2s\,e_1\cdot A e_1\right).
\end{equation}
This proves the desired claim~\eqref{pèkljniopjhgvftyrdszdwez} when $\omega=e_1$.

We now take $\omega\in\mathbb S^{N-1}$ and~$R_\omega\in O(N)$ such that 
\begin{equation}\label{àòèplkoijugytdreswasrdtfgyhj}
R_\omega e_1 =\omega.
\end{equation}
We set
\begin{equation}\label{vncmwqur385784SDFGH87r34y89to}
A_\omega:=R_\omega^{-1} A R_\omega = R_\omega^{T} A R_\omega.
\end{equation}
Thus, $A_\omega^T = A_\omega$. Hence, using the change of variable~$\theta := R_\omega\eta$, and recalling~\eqref{dretfyghuijoklpèòlppkoijuhgy},
\eqref{opkjoikjnio0ijohutfde4eswe4e5dstyh} and~\eqref{àòèplkoijugytdreswasrdtfgyhj},
\[
\begin{split}&
F_\omega(A)= \int_{\mathbb S^{N-1}}(R_\omega\eta\cdot A R_\omega\eta)|R_\omega e_1\cdot R_\omega\eta|^{2s}\,d\eta = \int_{\mathbb S^{N-1}}\eta\cdot (R_\omega^T A R_\omega)\eta \,|e_1 \cdot\eta|^{2s}\,d\eta\\
&\qquad\qquad= \int_{\mathbb S^{N-1}}\eta\cdot A_\omega \eta \,|\eta_1|^{2s}\,d\eta =  F_{e_1}(A_\omega).
\end{split}
\]
From this and~\eqref{xsdcfvgrbtdhnyjhtbgrsvfdcsxa}, we infer tht
\begin{equation}\label{cnewuti4uitkjt47838roihjsdfgfjie54oyu}
F_\omega(A)= \frac{J_{N,s}}{N+2s}\left(\tr(A_\omega)+2s\,e_1\cdot A_\omega e_1\right).
\end{equation}

Now we notice that, by~\eqref{àòèplkoijugytdreswasrdtfgyhj}
and~\eqref{vncmwqur385784SDFGH87r34y89to},
\[
e_1\cdot A_\omega e_1 =   e_1\cdot R_\omega^{T} A\,R_\omega e_1 = R_\omega e_1\cdot A\,R_\omega e_1 = \omega\cdot A \omega 
\]
and
\[
\begin{split}&
\tr(A_\omega)= \sum_{i=1}^N (R_\omega^T A R_\omega)_{ii} = \sum_{i,j,k=1}^N (R_\omega)_{ji} A_{jk} (R_\omega)_{ki}\\
&\qquad= \sum_{j,k=1}^N A_{jk} \sum_{i=1}^N (R_\omega)_{ji}(R_\omega)_{ki} = \sum_{j,k=1}^N A_{jk}\,\delta_{jk}= \sum_{j=1}^N A_{jj}= \tr(A).
\end{split}
\]
Combining these pieces of information with~\eqref{cnewuti4uitkjt47838roihjsdfgfjie54oyu}, we get
\[
F_\omega(A)= \frac{J_{N,s}}{N+2s}\left(\tr(A)+2s\,\omega\cdot A\omega\right),
\] which is
the claim in~\eqref{pèkljniopjhgvftyrdszdwez}.

Dividing by $J_{N,s}$ and recalling the definition in~\eqref{dretfyghuijoklpèòlppkoijuhgy}, we obtain the desired identity~\eqref{lsqkdJNCHUBSAKMOXxcsdjbhvhn}.
\end{proof}

We are now in the position to prove Proposition~\ref{thmmainconst}.
\begin{proof}[Proof of Proposition~\ref{thmmainconst}]
We consider the function $m:\mathbb S^{N-1}\to\R$ defined as \label{PALKMND:02p}
\begin{equation*}
m(\theta):=\frac{(N+2s)\theta\cdot A\theta-\tr(A)}{2s}.
\end{equation*}
We notice that $m\in L^1(\mathbb S^{N-1})$ and~$m(-\theta)=m(\theta)$.
Thus, recalling the setting of~\eqref{lòkjhgfdtresdtfyguhijokpl}
and using the notation~$\omega:=\frac{\xi}{|\xi|}$, for~$\xi\ne 0$, we infer that
\[
\begin{split}
\psi_m(\xi)&=\frac{c_{N,s}\kappa_s}{2s}|\xi|^{2s}\int_{\mathbb S^{N-1}}\big((N+2s)\theta\cdot A\theta-\tr(A)\big)
|\omega\cdot\theta|^{2s}\,d\theta\\
&=\frac{c_{N,s}\kappa_s}{2s}|\xi|^{2s}\left[(N+2s)\int_{\mathbb S^{N-1}}(\theta\cdot A\theta)|\omega\cdot\theta|^{2s}\,d\theta-\tr(A)\int_{\mathbb S^{N-1}}|\omega\cdot\theta|^{2s}\,d\theta\right].
\end{split}
\]
Now, by Lemmata~\ref{lemmauguale1} and~\ref{lemmaspherical}, we get $c_{N,s}\kappa_sJ_{N,s}=1$ and 
\[
(N+2s)
\int_{\mathbb S^{N-1}}(\theta\cdot A\theta)|\omega\cdot\theta|^{2s}\,d\theta= J_{N,s} \left(\tr(A)+2s\,\omega\cdot A\omega\right).
\]
Accordingly, by recalling 
the notation in~\eqref{ncewuri4y3itiyuwhdwuiefdefu5i654}, we conclude that
\[
\begin{split}
\psi_m(\xi)&=\frac{c_{N,s}\kappa_sJ_{N,s}}{2s}|\xi|^{2s}\left(\tr(A)+2s\,\omega\cdot A\omega-\tr(A)\right)=|\xi|^{2s}
\omega\cdot A\omega=|\xi|^{2s-2}\xi\cdot A\xi=p_A(\xi).
\end{split}
\]

Therefore,  combining Propositions \ref{BAsymbol} and~\ref{propEmsymbol}, we have
\[
\begin{split}
&\int_{\R^N}A D^s u(x)\cdot D^s v(x)\,dx = \int_{\R^N}p_A(\xi)\widehat{u}(\xi)\overline{\widehat{v}(\xi)}\,d\xi = \int_{\R^N}\psi_m(\xi)\,\widehat{u}(\xi)\,\overline{\widehat{v}(\xi)}\,d\xi\\
&\qquad=
\frac{c_{N,s}}{2}\iint_{\R^{2N}}\frac{(u(x)-u(y))(v(x)-v(y))}{|x-y|^{N+2s}}m\left(\frac{x-y}{|x-y|}\right)\,dx\,dy
\\&\qquad=\iint_{\R^{2N}}(u(x)-u(y))(v(x)-v(y))k_A(x,y)\,dx\,dy
,\end{split}
\] which entails that~\eqref{mnbhgvcfdxszxdfcgvhb} holds true,
as claimed.
\end{proof}

\section{Proof of  Theorem~\ref{corcharactA}}\label{lpkoijuhgyftdresdtfyguhijhb213}
We are now in the position to prove Theorem~\ref{corcharactA}.

\begin{proof}[Proof of Theorem~\ref{corcharactA}]
Assume first that~$(i)$ holds. By Proposition~\ref{thmmainconst}, the kernel
\[
\widetilde{k}_A(x,y):=\frac{c_{N,s}}{4s|x-y|^{N+2s}}\left((N+2s)\frac{(x-y)\cdot A(x-y)}{|x-y|^2}-\tr(A)\right)
\]
is symmetric and satisfies, for every~$u,v\in\mathcal S(\R^N)$,
\[
\int_{\R^N} A D^s u(x)\cdot D^s v(x)\, dx = \iint_{\R^{2N}}(u(x)-u(y))(v(x)-v(y))\widetilde{k}_A(x,y) \,dx\, dy
\]
 Thus, $k_A$ and~$\widetilde{k}_A$ represent the same bilinear form. By the uniqueness statement in Lemma~\ref{lemmauniq}, we infer that
\[
k_A(x,y)=\widetilde{k}_A(x,y)\quad\mbox{ for a.e.  }(x,y)\in\R^{2N}\setminus\Delta.
\]
Accordingly,~$(ii)$ is satisfied.

Conversely, assume that~$(ii)$ holds. Then, by Proposition~\ref{thmmainconst}, $k_A$ satisfies \eqref{mnbhgvcfdxszxdfcgvhb} for every~$u,v\in\mathcal S(\R^N)$,
which entails~$(i)$.

We now assume that either~$(i)$ or~$(ii)$ holds true and we focus on
the bounds of the kernel. We define
\begin{equation}\label{+èpokijuhgytfdreseasdfghjkl}
m_A(\theta):=\frac{c_{N,s}}{4s}\left((N+2s)\theta\cdot A\theta-\tr(A)\right),\quad{\mbox{ for every }}\theta\in\mathbb S^{N-1}.
\end{equation}
In this way, recalling~$(ii)$, we can write
\begin{equation}\label{cnmqr732iiSDFGHe7632rtgy}
k_A(x,y) = \frac{1}{|x-y|^{N+2s}} m_A\left(\frac{x-y}{|x-y|}\right)\quad\mbox{for a.e. }x\ne y.
\end{equation}

Let $\lambda_{\min}(A)$ and~$\lambda_{\max}(A)$ denote, respectively, the smallest and largest eigenvalues of the matrix~$A$. Thus,
\[
\lambda_{\min}(A)\le\theta\cdot A\theta\le\lambda_{\max}(A)\quad\mbox{ for every }\theta\in\mathbb S^{N-1}.
\]
Accordingly, we have that, for every~$\theta\in\mathbb S^{N-1}$,
\begin{equation}\label{+èàòplokiu8y7t6r5edsxdcfgvhb}
\frac{c_{N,s}}{4s}\left((N+2s)\lambda_{\min}(A)-\tr(A)\right)\le m_A(\theta)\le\frac{c_{N,s}}{4s}\left((N+2s)\lambda_{\max}(A)-\tr(A)\right).
\end{equation}
Furthermore, if $e_{\min},e_{\max}\in\mathbb S^{N-1}$ are unit
eigenvectors corresponding to the eigenvalues $\lambda_{\min}(A)$
and~$\lambda_{\max}(A)$, respectively, then
\[
\begin{split}
&m_A(e_{\min})=\frac{c_{N,s}}{4s}\left((N+2s)\lambda_{\min}(A)-\tr(A)\right)\\
\mbox{ and }\quad&m_A(e_{\max}) = \frac{c_{N,s}}{4s}\left((N+2s)\lambda_{\max}(A)-\tr(A)\right).
\end{split}
\]
Hence,
\begin{equation}\label{pèoiuhygftdreswaszdxcfgvhj}\begin{split}
&\min_{\theta\in\mathbb S^{N-1}}m_A(\theta) = \frac{c_{N,s}}{4s}\left((N+2s)\lambda_{\min}(A)-\tr(A)\right)\\ 
\mbox{ and }\quad&\max_{\theta\in\mathbb S^{N-1}}m_A(\theta) = \frac{c_{N,s}}{4s}\left((N+2s)\lambda_{\max}(A)-\tr(A)\right).
\end{split}\end{equation}

Now, since~$\|m_A\|_{L^{+\infty}(\mathbb S^{N-1})}<+\infty$,
we deduce from~\eqref{cnmqr732iiSDFGHe7632rtgy} that
\[
k_A(x,y)\le |k_A(x,y)|\le\|m_A\|_{L^{+\infty}(\mathbb S^{N-1})}|x-y|^{-N-2s}\quad\mbox{ for a.e.  }x\ne y.
\] Namely,
\eqref{srdtfyguioplèòlkoijhugyftr2} is satisfied with any~$\eta\le2s$ and any~$M\ge \|m_A\|_{L^{+\infty}(\mathbb S^{N-1})}$.

Now we suppose that~\eqref{tracecondition} holds
and prove~\eqref{srdtfyguioplèòlkoijhugyftr1}.
For this, we set
\[
\lambda_0:=\frac{c_{N,s}}{4s}\left((N+2s)\lambda_{\min}(A)-\tr(A)\right)\quad\mbox{ and }\quad\Lambda_0:=\frac{c_{N,s}}{4s}\left((N+2s)\lambda_{\max}(A)-\tr(A)\right).
\]
By~\eqref{tracecondition}, we have that~$\lambda_0>0$. Furthermore, by~\eqref{+èàòplokiu8y7t6r5edsxdcfgvhb},
\begin{equation}\label{nbhjgvfcdxszeawezsdrftgyhj}
\lambda_0\le m_A(\theta)\le\Lambda_0\quad\mbox{ for every }\theta\in\mathbb S^{N-1}.
\end{equation}
Combining this with~\eqref{cnmqr732iiSDFGHe7632rtgy}, we obtain that
\[
\lambda_0\le k_A(x,y)|x-y|^{N+2s}\le\Lambda_0\quad\mbox{ for a.e. }x\ne y.
\]
Thus,~\eqref{srdtfyguioplèòlkoijhugyftr1} holds with~$
\alpha=2s$, $\lambda=\lambda_0$ and~$\Lambda=\Lambda_0$.

It remains to prove the converse implication. Assume that a representing kernel $k_A$ satisfies~\eqref{srdtfyguioplèòlkoijhugyftr1} for some
$\alpha\in(0,2)$ and~$0<\lambda\le\Lambda<+\infty$. Since~\eqref{srdtfyguioplèòlkoijhugyftr2} is satisfied, for instance with~$\eta=2s$, Corollary~\ref{corkernelbounds} applies and yields $\alpha=2s$ and 
\begin{equation}\label{èplkojihugyftdrtextcftvgbh}
m_A^{*}(\theta)\ge\lambda>0\quad\mbox{ for a.e.  }\theta\in\mathbb S^{N-1},
\end{equation}
where $m_A^{*}$ denotes the function provided by Proposition~\ref{thmkernelstruct}.

On the other hand, Proposition~\ref{thmmainconst} provides the explicit
kernel
\[
\widetilde{k}_A(x,y)=\frac{c_{N,s}}{4s|x-y|^{N+2s}}\left((N+2s)\frac{(x-y)\cdot A(x-y)}{|x-y|^2}-\tr(A)\right) =\frac{1}{|x-y|^{N+2s}}m_A\left(\frac{x-y}{|x-y|}\right),
\]
with $m_A$ as in~\eqref{+èpokijuhgytfdreseasdfghjkl}. By the uniqueness statement in Proposition~\ref{thmkernelstruct}, we have
\[
k_A(x,y)=\widetilde{k}_A(x,y)\quad\mbox{ for a.e.  }x\ne y.
\]
Accordingly, $m_A^{*}$ coincides almost everywhere on $\mathbb S^{N-1}$ with the function $m_A$ defined in~\eqref{+èpokijuhgytfdreseasdfghjkl}. Thus,~\eqref{èplkojihugyftdrtextcftvgbh} gives
\[
m_A(\theta)\ge\lambda>0\quad\mbox{ for a.e.  }\theta\in\mathbb S^{N-1}.
\]
Since $m_A$ is continuous on $\mathbb S^{N-1}$, the same inequality holds for every~$\theta\in\mathbb S^{N-1}$. Hence,
\[
\min_{\theta\in\mathbb S^{N-1}}m_A(\theta)>0.
\]
By~\eqref{pèoiuhygftdreswaszdxcfgvhj}, this is equivalent to
\[
(N+2s)\lambda_{\min}(A)>\tr(A), 
\]
which is~\eqref{tracecondition}. 
\end{proof}

\section{A class of nonconstant matrices inducing a constant coefficient kernel}\label{MS-34or}

This section is devoted to the proof of Theorem \ref{invisiblepert}.

\begin{proof}[Proof of Theorem \ref{invisiblepert}]
We first consider the case $N=1$. In this case, the condition \eqref{lpèkoijhugyftdrxcfv} implies that $C(x)=0$ for every~$x\in\R$. Hence, $A(x)=A_0$ for every
$x\in\R$, and therefore
\[
\int_{\R} A(x)D^su(x)D^sv(x)\, dx = \int_{\R} A_0D^su(x)D^sv(x) \,dx
\] and
\[
k_{A_0}(x,y)
=
\frac{c_{1,s}A_0}{|x-y|^{1+2s}},
=
\frac{c_{1,s}}{2s|x-y|^{1+2s}}\left((1+2s)\frac{(x-y)\cdot A_0(x-y)}{|x-y|^2}-\tr(A_0)\right).
\]

We now assume $N\ge 2$. We set
\[
q_j(\xi):=\xi_j|\xi|^{s-1}\quad\mbox{ for } j=1,\ldots,N,
\]
and
\[
I_C[u,v]:=\int_{\R^N} C(x)D^su(x)\cdot D^sv(x)\,dx = \sum_{j,k=1}^N \ \int_{\R^N} C_{jk}(x)D_k^su(x)D_j^sv(x)\,dx.
\]

We now recall the following standard equalities for~$f$, $g\in \mathcal S(\R^N)$:
\[
\widehat{fg}(\zeta)=(2\pi)^{-N/2}\int_{\R^N}\widehat f(\eta)\widehat g(\zeta-\eta)\,d\eta\quad\mbox{ and }\quad \int_{\R^N}f(x)g(x)\,dx = \int_{\R^N}\widehat f(\zeta)\widehat g(-\zeta)\,d\zeta.
\]
Therefore, by using~\eqref{defnfrgradient}, and since
\[
\widehat{D_j^sv}(-\zeta) = i q_j(-\zeta)\widehat v(-\zeta)= -i q_j(\zeta)\widehat v(-\zeta),
\]
we obtain that
\[
\begin{split}
I_C[u,v]&= \sum_{j,k=1}^N \ \int_{\R^N}\widehat{C_{jk}D_k^su}(\zeta)\widehat{D_j^sv}(-\zeta)\,d\zeta\\& =(2\pi)^{-N/2}\sum_{j,k=1}^N \ \iint_{\R^{2N}}\widehat C_{jk}(\eta)\widehat{D_k^su}(\zeta-\eta)\,
\widehat{D_j^sv}(-\zeta)\,d\eta\,d\zeta \\
&= (2\pi)^{-N/2}\sum_{j,k=1}^N \ \iint_{\R^{2N}}\widehat C_{jk}(\eta)\, q_k(\zeta-\eta)q_j(\zeta)\widehat u(\zeta-\eta)
\widehat v(-\zeta)
\,d\eta\,d\zeta.
\end{split}
\]
We notice that all the above integrals are absolutely convergent. Indeed, $C\in C_c^{\infty}(\R^N, \R^{N\times N})$ implies that $\widehat C_{jk}\in\mathcal S(\R^N)$, while $u,v\in\mathcal S(\R^N)$ and~$|q(\xi)|=|\xi|^s$.  Hence, the polynomial growth of the factors $q_k(\zeta-\eta)$ and~$q_j(\zeta)$ is dominated by the rapid decay of $\widehat C_{jk}$, $\widehat u$, and~$\widehat v$. Thus,  Fubini's theorem applies.

We now fix $\eta,\zeta\in\R^N$ and we set $a:=\zeta-\eta$. Then,
\begin{equation}\label{awesrdtfyguiopè+òlpkojihughf}
\sum_{j,k=1}^N\widehat C_{jk}(\eta)q_k(a)q_j(\zeta) = |a|^{s-1}|\zeta|^{s-1} \sum_{j,k=1}^N\widehat C_{jk}(\eta)a_k\zeta_j = |a|^{s-1}|\zeta|^{s-1}\zeta\cdot \widehat C(\eta)a.
\end{equation}

Moreover, from~\eqref{lpèkoijhugyftdrxcfv}, we get
\[
\widehat C(\eta)^T=-\widehat C(\eta)\quad\mbox{ for every }\eta\in\R^N.
\]
Hence, for every~$a\in\R^N$, 
\begin{equation}\label{àòlkjhioiujhgfrtesawerd}
a\cdot \widehat C(\eta)a=0.
\end{equation}

In addition, by taking the Fourier transform of~\eqref{òldKMAFJNBHCJSNKM,}, we obtain that
\[
i\sum_{j=1}^N\eta_j\widehat C_{jk}(\eta)=0,\quad k=1,\ldots,N,
\]
namely
\begin{equation}\label{è+poijhgfdsrawe5rtyuijoo}
\eta^T\widehat C(\eta)=0.
\end{equation}
Since $\zeta=a+\eta$,  it follows from~\eqref{àòlkjhioiujhgfrtesawerd} and~\eqref{è+poijhgfdsrawe5rtyuijoo} that
\[
\zeta\cdot\widehat C(\eta)a = (a+\eta)\cdot\widehat C(\eta)a 
= a\cdot \widehat C(\eta)a + \eta\cdot \widehat C(\eta)a 
=a\cdot \widehat C(\eta)a + \eta^T \widehat C(\eta)\cdot a 
= 0.
\]
By using this information in~\eqref{awesrdtfyguiopè+òlpkojihughf},
we obtain that, for every~$\eta,\zeta\in\R^N$,
 $$\sum_{j,k=1}^N\widehat C_{jk}(\eta)q_k(a)q_j(\zeta)=0$$
and accordingly
\[
I_C[u,v]=0.
\]
Since $A(x)=A_0+C(x)$, this gives~\eqref{òè+poiuhygftdrszedrftychvghib}.

Finally,  set
\[
B_A[u,v]:=\int_{\R^N} A(x)D^su(x)\cdot D^sv(x)\,dx.
\]
By~\eqref{òè+poiuhygftdrszedrftychvghib}, we deduce that $B_A[u,v]=B_{A_0}[u,v]$ for every~$u,v\in\mathcal S(\R^N)$.
By Proposition~\ref{thmmainconst}, the kernel
\[
k_{A_0}(x,y)=\frac{c_{N,s}}{2s|x-y|^{N+2s}}\left((N+2s)\frac{(x-y)\cdot A_0(x-y)}{|x-y|^2}-\tr(A_0)\right)
\]
represents $B_{A_0}$, and therefore also $B_A$. Hence, if $k_A$ is any symmetric kernel representing $B_A$, the uniqueness of the representing kernel gives
\[
k_A(x,y)=k_{A_0}(x,y)\quad\mbox{for a.e. }x\ne y.
\]
This concludes the proof.
\end{proof}

\section{Examples}\label{MS-34or2}
The role of condition~\eqref{tracecondition} is worth emphasizing. We stress that~\eqref{tracecondition} is not needed for either
the existence or the uniqueness of the representing kernel, which follow from Propositions~\ref{thmkernelstruct} and~\ref{thmmainconst}. Rather, it ensures that $m_A(\theta)$ is uniformly bounded away from zero. Accordingly, it yields a corresponding lower bound for $k_A(x,y)$.

Corollary~\ref{corkernelbounds} shows that the kernel bounds~\eqref{srdtfyguioplèòlkoijhugyftr1} and~\eqref{srdtfyguioplèòlkoijhugyftr2} imply the uniform positivity of~$m_A(\theta)$, while Theorem~\ref{corcharactA} translates this condition into a condition on the matrix~$A$.

\smallskip

We now present two explicit examples illustrating the role of condition~\eqref{tracecondition}. The two matrices will be chosen from the same family with $1$ parameter.

Let $N=3$, $s\in(0,1)$, and set
\[
q:=\frac{1}{\sqrt{3}}(1,1,1).
\]
Consider the matrix
\[
P:=\frac{1}{3}
\begin{pmatrix}
2 & -1 & -1\\
-1 & 2 & -1\\
-1 & -1 & 2
\end{pmatrix}.
\]
Notice that, for every~$x\in\R^3$, we have $Px=x-(x\cdot q)q$, namely $P$ is the orthogonal projection onto $q^\perp$.

For $a\ge 0$, we define
\[
A_a:=I+aP =
\begin{pmatrix}
1+\frac{2a}{3} & -\frac{a}{3} & -\frac{a}{3}\\[5pt]
-\frac{a}{3} & 1+\frac{2a}{3} & -\frac{a}{3}\\[5pt]
-\frac{a}{3} & -\frac{a}{3} & 1+\frac{2a}{3}
\end{pmatrix}.
\]
We have that $A_a$ is symmetric. Moreover, if $a>0$, then $A_a$ is not diagonal. 

Now, since $Pq=0$ and
\[
P\xi=\xi\quad\mbox{ for every }\xi\in q^\perp,
\]
we infer that
\[
A_aq=q\quad\mbox{ and }\quad A_a\xi=(1+a)\xi\quad\mbox{ for every }\xi\in q^\perp.
\]
Accordingly, we have
\begin{equation}\label{pokijuhgyftdresawsrdtrfyguhijk}
\lambda_{\min}(A_a)=1,\quad\lambda_{\max}(A_a)=1+a,\quad\tr(A_a)=3+2a.
\end{equation}
Therefore,
\[
(3+2s)\lambda_{\min}(A_a)-\operatorname{tr}(A_a) = 2(s-a).
\]
In particular,~\eqref{tracecondition} is verified for $a<s$.

For a later use, we also compute the function $m_{A_a}(\theta)$. To this end, we notice that, for~$\theta\in\mathbb S^2$,
\[
\theta\cdot P\theta = |\theta|^2-(\theta\cdot q)^2 = 1-(\theta\cdot q)^2.
\]
Hence
\[
\theta\cdot A_a\theta = 1+a\left(1-(\theta\cdot q)^2\right).
\]

The kernel associated with $A_a$ is given by
\[
k_{A_a}(x,y) = \frac{1}{|x-y|^{3+2s}} m_{A_a}\left(\frac{x-y}{|x-y|}\right),
\]
where
\begin{equation}\label{drsetyguiokplèò+pooihugyhuijbkh}
m_{A_a}(\theta)=\frac{c_{3,s}}{2s}\Big((3+2s)\left(1+a(1-(\theta\cdot q)^2)\right)-(3+2a)\Big).
\end{equation}
Moreover, Proposition~\ref{thmmainconst} gives that, for every~$u,v\in\mathcal S(\R^3)$,
\begin{equation}\label{òèqSDlp<bfkoxcgijndskmaòl,AD}
\int_{\R^3}A_aD^su(x)\cdot D^sv(x)\,dx = \iint_{\R^3\times\R^3}(u(x)-u(y))(v(x)-v(y)) k_{A_a}(x,y)\,dx\,dy.
\end{equation}
Thus, the representation formula holds throughout the whole family $(A_a)_{a\ge 0}$, independently of whether the lower ellipticity bound~\eqref{srdtfyguioplèòlkoijhugyftr1} is satisfied.

\subsection{Representation with a uniform lower bound}
We take $a:=\frac{s}{2}$. Thus, we have
\[
A_{s/2}=
\begin{pmatrix}
1+\frac{s}{3} & -\frac{s}{6} & -\frac{s}{6}\\[5pt]
-\frac{s}{6} & 1+\frac{s}{3} & -\frac{s}{6}\\[5pt]
-\frac{s}{6} & -\frac{s}{6} & 1+\frac{s}{3}
\end{pmatrix}.
\]
By~\eqref{pokijuhgyftdresawsrdtrfyguhijk}, we get
\[
\lambda_{\min}(A_{s/2})=1,\quad\lambda_{\max}(A_{s/2})=1+\frac{s}{2},\quad\tr(A_{s/2})=3+s.
\]
Therefore, we obtain that
\[
(3+2s)\lambda_{\min}(A_{s/2})-\tr(A_{s/2}) = (3+2s)-(3+s) = s>0,
\]
namely $A_{s/2}$ satisfies~\eqref{tracecondition} for every~$s\in(0,1)$. Now, substituting $a=s/2$ in~\eqref{drsetyguiokplèò+pooihugyhuijbkh}, we obtain that
\[
m_{A_{s/2}}(\theta)=\frac{c_{3,s}}{2s}\left((3+2s)\left(1+\frac{s}{2}\bigl(1-(\theta\cdot q)^2\bigr)\right)-(3+s)\right) = \frac{c_{3,s}}{2}\left(1+\frac{3+2s}{2}(1-(\theta\cdot q)^2)\right).
\]
Since $0\le 1-(\theta\cdot q)^2\le 1$, we have that, for every~$\theta\in\mathbb S^2$,
\[
\frac{c_{3,s}}{2}\le m_{A_{s/2}}(\theta)\le\frac{c_{3,s}}{2}\left(1+\frac{3+2s}{2}\right)=\frac{c_{3,s}(5+2s)}{4}
.\]
 Therefore, for every~$x\ne y$,
\[
\frac{c_{3,s}}{2}\le k_{A_{s/2}}(x,y)|x-y|^{3+2s}\le\frac{c_{3,s}(5+2s)}{4}
.\]
 We notice that the latter inequality holds not only for $|x-y|\le1$, but at every scale. Hence,~\eqref{srdtfyguioplèòlkoijhugyftr1} is satisfied with
\[
\alpha=2s,\quad\lambda=\frac{c_{3,s}}{2},\quad\Lambda=\frac{c_{3,s}(5+2s)}{4}.
\]
The corresponding kernel is given by
\[
k_{A_{s/2}}(x,y) = \frac{c_{3,s}}{2|x-y|^{3+2s}}\left(1+\frac{3+2s}{2}\left(1-\left(\frac{x-y}{|x-y|}\cdot q\right)^2\right)\right).
\]
By~\eqref{òèqSDlp<bfkoxcgijndskmaòl,AD}, for every~$u,v\in\mathcal S(\R^3)$,
\[
\int_{\R^3} A_{s/2} D^su(x)\cdot D^sv(x)\,dx=\iint_{\R^3\times\R^3}(u(x)-u(y))(v(x)-v(y))k_{A_{s/2}}(x,y)\,dx\,dy.
\]
In addition, we notice that
\[
k_{A_{s/2}}(x,y)
\le
\frac{c_{3,s}(5+2s)}{4}
|x-y|^{-3-2s},
\]
so that~\eqref{srdtfyguioplèòlkoijhugyftr2} is satisfied, for instance, with
\[
\eta=2s,\quad M=\frac{c_{3,s}(5+2s)}{4}.
\]

\subsection{Representation without a uniform lower bound}
We now take $a:=s$. Thus, we have
\[
A_s=
\begin{pmatrix}
1+\frac{2s}{3} & -\frac{s}{3} & -\frac{s}{3}\\[5pt]
-\frac{s}{3} & 1+\frac{2s}{3} & -\frac{s}{3}\\[5pt]
-\frac{s}{3} & -\frac{s}{3} & 1+\frac{2s}{3}
\end{pmatrix}.
\]
Again, by~\eqref{pokijuhgyftdresawsrdtrfyguhijk}, we get
\[
\lambda_{\min}(A_s)=1,\quad\lambda_{\max}(A_s)=1+s,\quad\tr(A_s)=3+2s.
\]
Hence
\[
(3+2s)\lambda_{\min}(A_s) = 3+2s = \tr(A_s),
\]
and therefore~\eqref{tracecondition} does not hold. Now, by substituting $a=s$ in~\eqref{drsetyguiokplèò+pooihugyhuijbkh}, we obtain that
\[
m_{A_s}(\theta)=\frac{c_{3,s}}{2s}\left((3+2s)\left(1+s(1-(\theta\cdot q)^2)\right)-(3+2s)\right) = \frac{c_{3,s}(3+2s)}{2}(1-(\theta\cdot q)^2).
\]
Thus
\begin{equation}\label{w435e67890'ìpèlèkojhgfdxz}
0\le m_{A_s}(\theta)\le\frac{c_{3,s}(3+2s)}{2},
\end{equation}
and, in particular, $m_{A_s}(q)=m_{A_s}(-q)=0$.  Therefore,
\[
\min_{\theta\in\mathbb S^2}m_{A_s}(\theta)=0.
\]
We claim that this prevents~\eqref{srdtfyguioplèòlkoijhugyftr1} from holding. Indeed, suppose by contradiction that
\eqref{srdtfyguioplèòlkoijhugyftr1} holds for some $\alpha\in(0,2)$ and~$0<\lambda\le\Lambda<+\infty$. By Corollary~\ref{corkernelbounds},  one has~$
\alpha=2s$.
Hence the lower bound in~\eqref{srdtfyguioplèòlkoijhugyftr1} becomes
\[
\lambda\le k_{A_s}(x,y)|x-y|^{3+2s}
\]
for almost every~$x\ne y$ with $|x-y|\le1$. Now, since
\[
k_{A_s}(x,y)|x-y|^{3+2s} = m_{A_s}\left(\frac{x-y}{|x-y|}\right),
\]
this would imply that
\begin{equation}\label{cnweruig4387t547ewfguoi}
m_{A_s}(\theta)\ge\lambda\quad\mbox{ for a.e. }\theta\in\mathbb S^2.
\end{equation}

On the other hand, we have $m_{A_s}(q)=0$.  Since $m_{A_s}$ is continuous on $\mathbb S^2$, for every~$\lambda>0$ there exists a nonempty open neighborhood $U\subset\mathbb S^2$ of $q$ such that
\[
m_{A_s}(\theta)<\lambda\quad\mbox{ for every }\theta\in U.
\]
Since every nonempty open subset of $\mathbb S^2$ has positive surface measure, this contradicts the lower bound~\eqref{cnweruig4387t547ewfguoi}.

Nevertheless, the representation formula remains valid. Indeed, the kernel given by Proposition~\ref{thmmainconst} is
\[
k_{A_s}(x,y) = \frac{c_{3,s}(3+2s)}{2|x-y|^{3+2s}}\left(1-\left(\frac{x-y}{|x-y|}\cdot q\right)^2\right),
\]
and, for every~$u,v\in\mathcal S(\R^3)$,
\[
\int_{\R^3} A_s D^su(x)\cdot D^sv(x)\,dx = \iint_{\R^3\times\R^3}(u(x)-u(y))(v(x)-v(y))k_{A_s}(x,y)\,dx\,dy.
\]
Moreover, by~\eqref{w435e67890'ìpèlèkojhgfdxz}, we have that
\[
k_{A_s}(x,y)\le\frac{c_{3,s}(3+2s)}{2}|x-y|^{-3-2s}.
\]
Thus, ~\eqref{srdtfyguioplèòlkoijhugyftr2} holds with
\[
\eta=2s,\quad M=\frac{c_{3,s}(3+2s)}{2}.
\]
So this is a case where the representation formula and~\eqref{srdtfyguioplèòlkoijhugyftr2} remain true, while the positive lower bound in
\eqref{srdtfyguioplèòlkoijhugyftr1} fails.

\begin{appendix}

\section{Proof of Lemma~\ref{prop:E1normalization}}\label{appe:E1normalization}

\begin{proof}[Proof of Lemma~\ref{prop:E1normalization}]
We claim that, for $\xi\in\R^N$,
\begin{equation}\label{pokjhgfdre4356789i0opl}
\int_{\R^N}\frac{1-\cos(h\cdot\xi)}{|h|^{N+2s}}\, dh = \frac{|\xi|^{2s}}{c_{N,s}},
\end{equation}
Indeed, by \cite[3.381, n.4,  page 348]{MR3307944} we get
\[
|h|^{-N-2s} = \frac{1}{\Gamma(\frac N2+s)} \int_0^{+\infty} t^{\frac N2+s-1}e^{-t|h|^2}\, dt.
\]
Moreover, \cite[3.433, page 363]{MR3307944}, here applied with $n=1$ and $p=-s$, gives
\[
\int_0^{+\infty}\frac{1-e^{-r}}{r^{1+s}} \,dr =\frac{\Gamma(1-s)}{s},
\]
By using the latter identity and the change of variable $r:=|\xi|^2/(4t)$, we obtain that, for $\xi\ne0$,
\[
\begin{split}&
\int_{\R^N}\frac{1-\cos(h\cdot\xi)}{|h|^{N+2s}} \,dh = \frac{\pi^{N/2}}{\Gamma(\frac N2+s)} \int_0^{+\infty} t^{s-1}
\left(1-e^{-\frac{|\xi|^2}{4t}}\right) \,dt = \frac{\pi^{N/2}|\xi|^{2s}}{2^{2s}\Gamma(\frac N2+s)}\int_0^{+\infty}\frac{1-e^{-r}}{r^{1+s}} \,dr\\
&\qquad\qquad=\frac{\pi^{N/2}\Gamma(1-s)}{2^{2s}s\,\Gamma(\frac N2+s)}|\xi|^{2s}=\frac{|\xi|^{2s}}{c_{N,s}},
\end{split}
\]
which proves \eqref{pokjhgfdre4356789i0opl}.

Now, by Plancherel's identity, the translation property of the Fourier transform and~\eqref{pokjhgfdre4356789i0opl}, we have
\[
\begin{split}
\mathcal E_1(u,v)&=\frac{c_{N,s}}{2}\int_{\R^N}\frac{1}{|h|^{N+2s}}\int_{\R^N}(u(x)-u(x-h))(v(x)-v(x-h))\,dx\,dh\\
&=\frac{c_{N,s}}{2}\int_{\R^N}\frac{1}{|h|^{N+2s}}\int_{\R^N}|1-e^{-ih\cdot\xi}|^2\widehat u(\xi)\overline{\widehat v(\xi)} \,d\xi\, dh\\
&=c_{N,s}\int_{\R^N}\widehat u(\xi)\overline{\widehat v(\xi)}\left(\,\int_{\R^N}\frac{1-\cos(h\cdot\xi)}{|h|^{N+2s}} \,dh
\right)\,d\xi \\&= \int_{\R^N}|\xi|^{2s}\widehat u(\xi)\overline{\widehat v(\xi)}\, d\xi.
\end{split}
\]
We point out that the use of Fubini's theorem is justified by \eqref{pokjhgfdre4356789i0opl} and the rapid decay
of both~$\widehat u$ and~$\widehat v$. Moreover, by \eqref{defnfrgradient} and the Plancherel's identity, we have that
\begin{eqnarray*}&&
\int_{\R^N}D^su\cdot D^sv dx = \sum_{j=1}^N\int_{\R^N}\widehat{D_j^su}(\xi)\overline{\widehat{D_j^sv}(\xi)} d\xi \\&&\qquad= \sum_{j=1}^N \int_{\R^N}\xi_j^2|\xi|^{2s-2}\widehat u(\xi)\overline{\widehat v(\xi)}\,d\xi = \int_{\R^N}|\xi|^{2s}\widehat u(\xi)\overline{\widehat v(\xi)} d\xi.
\end{eqnarray*}
Comparing the last two expressions yields \eqref{E1normalization}, as desired.
\end{proof}

\section{Proof of Proposition \ref{propsuppcompatto}}\label{appendixsuppcompatto}
Here we prove that representing kernels cannot have finite interaction range.
\begin{proof}[Proof of Proposition \ref{propsuppcompatto}]
Suppose by contradiction that there exists $R_0>0$ such that
\begin{equation}\label{cmeru4i574839xbhjsavjcfewuh}
{\mbox{$k_A(x,y)=0$ for all~$(x,y)\in\R^{2N}$ with~$|x-y|>R_0$.}}\end{equation}
Let $\phi\in C_c^{\infty}(\R^N)$, with $\phi\not\equiv 0$, and take $R\ge 1$. We define
\begin{equation}\label{òlpèokiugtfrdesw}
u_R(x):=\phi\left(\frac{x}{R}\right).
\end{equation}
Since the fractional gradient is homogeneous of degree $s$, we have that
\[
D^s u_R(x) = R^{-s}(D^s\phi)\left(\frac{x}{R}\right).
\]
Thus,
\[
\|D^s u_R\|_{L^2(\R^N)}^2 = R^{N-2s}\|D^s\phi\|_{L^2(\R^N)}^2.
\]
Moreover, since $A$ is uniformly elliptic and $\phi\not\equiv 0$, we infer that
\begin{equation}\label{lèpokijuhygftde43waser6fyguyhjo}
\int_{\R^N} A(x)D^s u_R(x)\cdot D^s u_R(x) dx\ge\lambda_A\|D^s u_R\|_{L^2(\R^N)}^2 = \lambda_A R^{N-2s}\|D^s\phi\|_{L^2(\R^N)}^2>0.
\end{equation}

We now estimate the right-hand side of
\eqref{mnbhgvcfdxszxdfcgvhb}. For any~$z\in\R^N$, we define
\[
G(z):=\int_{\R^N}|\phi(x+z)-\phi(x)|^2 dx.
\]
By the fundamental theorem of calculus, we have
\[
\phi(x+z)-\phi(x) = \int_0^1 z\cdot\nabla\phi(x+tz) dt.
\]
Hence, by using Minkowski inequality, we get that
\[
\|\phi(\cdot+z)-\phi\|_{L^2(\R^N)}\le |z|\|\nabla\phi\|_{L^2(\R^N)}.
\]
Therefore, we obtain that
\[
G(z)\le |z|^2\|\nabla\phi\|_{L^2(\R^N)}^2.
\]
Thus, from the latter inequality and \eqref{òlpèokiugtfrdesw}, we infer that, for every~$h\in\R^N$,
\begin{equation}\label{plkmk098uyt5ewq2wsdertyhjk}
\int_{\R^N} |u_R(x+h)-u_R(x)|^2 dx = R^N G\left(\frac{h}{R}\right)\le R^{N-2}|h|^2 \|\nabla\phi\|_{L^2(\R^N)}^2.
\end{equation}

We now set
\[
E_R:=\iint_{\R^{2N}}(u_R(x)-u_R(y))^2 k_A(x,y) \,dx\, dy
\] and we observe that, by~\eqref{cmeru4i574839xbhjsavjcfewuh},
\[
E_R=\iint_{\{|x-y|\le R_0\}}(u_R(x)-u_R(y))^2 k_A(x,y) \,dx\, dy.
\]

We distinguish two cases. If $R_0\le 1$, then $\{|x-y|\le R_0\}\subseteq\{|x-y|\le 1\}$, and therefore we can use assumption \eqref{poiuytresdxcvghbnjm}, together with~\eqref{plkmk098uyt5ewq2wsdertyhjk}, to find that
\begin{equation}\label{klmjnhgufyctdxrszea<wzesxrdctfvgbhnj}
\begin{split}
E_R &\le\Lambda\iint_{\{|x-y|\le R_0\}}\frac{|u_R(x)-u_R(y)|^2}{|x-y|^{N+\alpha}} \,dx\, dy\\
&\le\Lambda\int_{\{|h|\le R_0\}}\frac{1}{|h|^{N+\alpha}}\left(\;\int_{\R^N}|u_R(x+h)-u_R(x)|^2 \,dx\right)\,dh\\&\le C R^{N-2}\int_{\{|h|\le R_0\}}|h|^{2-N-\alpha} \,dh,
\end{split}
\end{equation}
where the last integral is finite since we assumed $\alpha\in(0,2)$. Thus, there exists a constant~$C_{R_0}>0$,
independent of $R$, such that
\begin{equation}\label{jhugyftrdeswdrtfygh}
E_R\le C_{R_0}R^{N-2}.
\end{equation}

If instead~$R_0>1$, we write~$E_R=E_{R,1}+E_{R,2}$,
where
\begin{eqnarray*}&&
E_{R,1}:=\iint_{\{|x-y|\le1\}}(u_R(x)-u_R(y))^2k_A(x,y)\,dx\,dy\\
\mbox{ and }\quad&& E_{R,2}:=\iint_{\{1<|x-y|\le R_0\}}(u_R(x)-u_R(y))^2k_A(x,y) \,dx\, dy.
\end{eqnarray*}
By using~\eqref{klmjnhgufyctdxrszea<wzesxrdctfvgbhnj} with $R_0=1$
we see that
\begin{equation}\label{ER1kjnbhgvfcdxsxdcfvgb}
E_{R,1}\le C R^{N-2},
\end{equation}
where $C>0$ is independent of $R$. 

Moreover, by \eqref{cxzsedrftyhujiko098765r4e3wq2a<}, we get
\begin{eqnarray*}
E_{R,2}&\le& M\iint_{\{1<|x-y|\le R_0\}}\frac{|u_R(x)-u_R(y)|^2}{|x-y|^{N+\eta}} \,dx\, dy
\\&\le& M\int_{\{1<|h|\le R_0\}}\frac{1}{|h|^{N+\eta}}\left(\;\int_{\R^N}|u_R(x+h)-u_R(x)|^2\, dx\right)\,dh.
\end{eqnarray*}
Thus, by \eqref{plkmk098uyt5ewq2wsdertyhjk}, we have
\begin{equation}\label{ER2mkjhgfdeswedrrfghjk}
E_{R,2}\le C R^{N-2}\int_{\{1<|h|\le R_0\}} |h|^{2-N-\eta}\, dh \le 
C_{R_0}R^{N-2},
\end{equation} for some~$C_{R_0}>0$ independent of~$R$.

Combining \eqref{ER1kjnbhgvfcdxsxdcfvgb} and \eqref{ER2mkjhgfdeswedrrfghjk}, we obtain that
\begin{equation}\label{ERfinalokpijhugyftdreswrdtfgyh}
E_R\le C_{R_0}R^{N-2},
\end{equation}
where $C_{R_0}>0$ is independent of $R$. 

Hence, by \eqref{mnbhgvcfdxszxdfcgvhb}, \eqref{lèpokijuhygftde43waser6fyguyhjo} and \eqref{ERfinalokpijhugyftdreswrdtfgyh}, we infer that
\[
0<\lambda_A R^{N-2s}\|D^s\phi\|_{L^2(\R^N)}^2\le C_{R_0}R^{N-2}.
\]
Since $R\ge 1$, dividing both sides by $R^{N-2}$ yields
\[
\lambda_A \|D^s\phi\|_{L^2(\R^N)}^2 R^{2-2s}\le C_{R_0}.
\]
Thus, since $\lambda_A \|D^s\phi\|_{L^2(\R^N)}^2>0$ and $s\in(0,1)$, the left hand side tends to $+\infty$ as $R\to+\infty$, while $C_{R_0}$ is a constant independent of~$R$. This is a contradiction. 
\end{proof}

\section{Polar representation of homogeneous kernels}
We now show that a locally integrable kernel $K$ satisfying a homogeneity assumption, can be represented in polar coordinates in terms of its angular part. 

\begin{lemma}\label{lemmatecnico2.2}
Let $\gamma>0$ and~$N\ge 1$. Assume that $K\in L^1_{loc}(\R^N\setminus\{0\})$ satisfies, for every~$r>0$,
\begin{equation}\label{ò.,adklms<cgvysjnzklm,}
K(r\zeta)=r^{-\gamma}K(\zeta)\quad\mbox{for a.e. }\zeta\ne0.
\end{equation}

Then, there exists $m\in L^1(\mathbb S^{N-1})$ such that
\[
K(\rho\theta)=\rho^{-\gamma}m(\theta)\quad\mbox{ for a.e.  }(\rho,\theta)\in(0,+\infty)\times\mathbb S^{N-1}.
\]
\end{lemma}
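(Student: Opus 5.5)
The plan is to pass to polar coordinates and reduce the statement to a one-dimensional fact about functions on the multiplicative group $(0,+\infty)$. Consider the map $\Phi:(0,+\infty)\times\mathbb S^{N-1}\to\R^N\setminus\{0\}$, $\Phi(\rho,\theta)=\rho\theta$. It is a diffeomorphism with Jacobian $\rho^{N-1}$. Define $G(\rho,\theta):=\rho^{\gamma}K(\rho\theta)$. Since $K\in L^1_{loc}(\R^N\setminus\{0\})$ and $\rho^{\gamma+N-1}$ is bounded above and below on compact subsets of $(0,+\infty)$, we get $G\in L^1_{loc}((0,+\infty)\times\mathbb S^{N-1})$.

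Translated to polar coordinates, the hypothesis~\eqref{ò.,adklms<cgvysjnzklm,} reads: for every $r>0$,
\[
G(r\rho,\theta)=G(\rho,\theta)\quad\mbox{for a.e. }(\rho,\theta).
\]
Indeed, $G(r\rho,\theta)=(r\rho)^\gamma K(r\rho\theta)=(r\rho)^{\gamma}r^{-\gamma}K(\rho\theta)$. Here we use that the exceptional null set in $\zeta$ corresponds to a null set in $(\rho,\theta)$, because $\Phi$ maps null sets to null sets.

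Next we change variables to $t=\log\rho$ and set $g(t,\theta):=G(e^t,\theta)$. Then $g\in L^1_{loc}(\R\times\mathbb S^{N-1})$, and for every $\tau\in\R$ we have $g(t+\tau,\theta)=g(t,\theta)$ for a.e. $(t,\theta)$. The key step is to show that such a function coincides a.e. with a function of $\theta$ alone.

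The main obstacle is handling the fact that the null set depends on $\tau$. I would deal with it by mollifying in $t$ only. Take a standard mollifier $\varphi_\epsilon$ on $\R$ and define
\[
g_\epsilon(t,\theta):=\int_\R g(t-\sigma,\theta)\varphi_\epsilon(\sigma)\,d\sigma .
\]
For each fixed $\tau$, integrating the identity $g(\cdot+\tau,\cdot)=g$ a.e. against test functions gives $g_\epsilon(t+\tau,\theta)=g_\epsilon(t,\theta)$ for a.e. $(t,\theta)$. A cleaner route is to test against $\psi(t)\chi(\theta)$ with $\psi\in C_c^\infty(\R)$ and $\chi\in L^\infty(\mathbb S^{N-1})$. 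Then the function
\[
\Psi(\tau):=\iint g(t,\theta)\psi(t-\tau)\chi(\theta)\,dt\,d\theta
\]
is constant in $\tau$, so its derivative vanishes. This says that $\partial_t g=0$ in the distributional sense on $\R\times\mathbb S^{N-1}$, fiberwise after integrating against $\chi$.

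Alternatively, and more elementarily, we can use Fubini. Consider the set of pairs $(t,\tau)$ in a compact box and the function $H(t,\tau,\theta):=|g(t+\tau,\theta)-g(t,\theta)|$, truncated if needed (for instance replace $g$ by $\arctan g$ to have integrability). The integral of $H$ over $(t,\tau,\theta)$ vanishes, since for each $\tau$ it vanishes in $(t,\theta)$. By Fubini, there is then a.e. $t_0$ such that $g(t_0+\tau,\theta)=g(t_0,\theta)$ for a.e. $(\tau,\theta)$. We define $m(\theta):=g(t_0,\theta)$, adjusting $t_0$ so that this slice is measurable and integrable, which holds for a.e. $t_0$ by Fubini applied to $g\in L^1_{loc}$. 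This gives $g(t,\theta)=m(\theta)$ for a.e. $(t,\theta)$.

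Finally, undoing the changes of variables gives $K(\rho\theta)=\rho^{-\gamma}m(\theta)$ for a.e. $(\rho,\theta)$. Integrability of $m$ follows from
\[
\int_{1\le|\zeta|\le2}|K(\zeta)|\,d\zeta=\|m\|_{L^1(\mathbb S^{N-1})}\int_1^2\rho^{N-1-\gamma}\,d\rho<+\infty,
\]
which is finite by local integrability of $K$ on the annulus.
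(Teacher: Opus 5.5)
Your proposal is correct, and its Fubini route is essentially the paper's proof: you pass to $G(\rho,\theta)=\rho^\gamma K(\rho\theta)$, use Tonelli on the exceptional set to upgrade ``for each $r$, for a.e.\ $(\rho,\theta)$'' to ``for a.e.\ $(r,\rho,\theta)$'', and then freeze a good variable. The differences are cosmetic. You work in logarithmic coordinates and take $m=g(t_0,\cdot)$ for a good slice; this needs $\tau$ to range over all of $\R$ (Tonelli allows this) or over countably many boxes. The paper instead sets $m(\theta)=\frac{1}{\beta-\alpha}\int_\alpha^\beta G(s,\theta)\,ds$, which gives measurability and integrability of $m$ directly, and your mollification/distributional detour is not needed.
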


\begin{proof}
Let $d\sigma$ denote the surface measure on $\mathbb S^{N-1}$, and take $0<\alpha<\beta<+\infty$.  When $N=1$, we use the notation $\mathbb S^0=\{-1,1\}$, endowed with the $0$ dimensional Hausdorff measure, so that the usual polar integration formula remains true.

We define
\begin{equation}\label{òàèù+plkoijhugyftdresdtfygh}
G(\rho,\theta):=\rho^\gamma K(\rho\theta),\quad\mbox{ for }(\rho,\theta)\in(0,+\infty)\times\mathbb S^{N-1}.
\end{equation}
We claim that
\begin{equation}\label{q23456789olkmnbvcdse4567uikmnb }
G\in L^1\left((\alpha,\beta)\times\mathbb S^{N-1}\right).
\end{equation}
Indeed, set
\[
C_{\alpha,\beta}:=\max_{\rho\in[\alpha,\beta]}\rho^{\gamma-N+1}<+\infty.
\]
Since $\{x\in\R^N:\alpha\le |x|\le\beta\}\subset\R^N\setminus\{0\}$ is compact, and~$K\in L^1_{loc}(\R^N\setminus\{0\})$,  by using the change of variable $x:=\rho\theta$, we infer that
\[
\begin{split}&
\int_\alpha^\beta\int_{\mathbb S^{N-1}}|G(\rho,\theta)| d\sigma(\theta) d\rho= \int_\alpha^\beta\int_{\mathbb S^{N-1}}\rho^\gamma |K(\rho\theta)| d\sigma(\theta) d\rho\le C_{\alpha,\beta}\int_\alpha^\beta\int_{\mathbb S^{N-1}}|K(\rho\theta)|\rho^{N-1}\,d\sigma(\theta)\,d\rho\\
&\qquad\qquad=\int_{\{x\in\R^N:\alpha\le |x|\le\beta\}} |K(x)|\,dx <+\infty,
\end{split}
\]
thus establishing~\eqref{q23456789olkmnbvcdse4567uikmnb }.

We now use the homogeneity assumption. Fix $r>0$. By~\eqref{ò.,adklms<cgvysjnzklm,} we get
\[
K(r\rho\theta)=r^{-\gamma}K(\rho\theta)\quad\mbox{ for a.e. } (\rho,\theta)\in(0,+\infty)\times\mathbb S^{N-1}.
\]
Accordingly, we have
\begin{equation}\label{òlkjihugyftdxrcfv}
G(r\rho,\theta)=(r\rho)^\gamma K(r\rho\theta)=r^\gamma\rho^\gamma r^{-\gamma}K(\rho\theta)=G(\rho,\theta)\quad\mbox{ for a.e. } (\rho,\theta)\in(0,+\infty)\times\mathbb S^{N-1}.
\end{equation}

We claim that, in fact,
\begin{equation}\label{.òlpkojihugyfctdxrszexdcfvgbhn}
G(r\rho,\theta)=G(\rho,\theta) \quad\mbox{ for a.e. } (r, \rho,\theta)\in(0,+\infty)\times (0, +\infty)\times\mathbb S^{N-1}.
\end{equation}
Indeed, define
\[
E:=\left\{(r,\rho,\theta)\in(0,+\infty)\times (0, +\infty)\times\mathbb S^{N-1} \mbox{ s.t. } G(r\rho,\theta)\ne G(\rho,\theta)\right\}.
\]
For every fixed $r>0$, let
\[
E_r:=\left\{(\rho,\theta)\in(0,+\infty)\times\mathbb S^{N-1} \mbox{ s.t. } (r,\rho,\theta)\in E\right\}.
\]
By~\eqref{òlkjihugyftdxrcfv}, it follows that $E_r$ has zero
measure in~$(0,+\infty)\times\mathbb S^{N-1} $ for every~$r>0$. Therefore,
by Tonelli's theorem applied to $\chi_E$, it follows that
\[
\int_0^{+\infty}\int_0^{+\infty}\int_{\mathbb S^{N-1}} \chi_E(r,\rho,\theta) d\sigma(\theta)\,d\rho\,dr =\int_0^{+\infty}\left[\int_0^{+\infty}\int_{\mathbb S^{N-1}}\chi_E(r,\rho,\theta) d\sigma(\theta)\,d\rho\right]dr=0.
\]
Thus, $E$ has measure zero. This proves the claim in~\eqref{.òlpkojihugyfctdxrszexdcfvgbhn}.

We now set~$s:=r\rho$ and we have from~\eqref{.òlpkojihugyfctdxrszexdcfvgbhn} that
\[
G(s,\theta)=G(\rho,\theta)\quad\mbox{ for a.e. } (s, \rho,\theta)\in(0,+\infty)\times (0, +\infty)\times\mathbb S^{N-1}.
\]
Applying Fubini's theorem once more, we deduce that for almost every~$(\rho,\theta)\in(0,+\infty)\times\mathbb S^{N-1}$, one has
\begin{equation}\label{kmjnbhgvcfdxszxfcghvbjnkml,ò.}
G(s,\theta)=G(\rho,\theta)\quad\mbox{for a.e. }s\in(\alpha,\beta).
\end{equation}

We now define
\[
m(\theta):=\frac1{\beta-\alpha}\int_\alpha^\beta G(s,\theta)\,ds
.\]
Thanks to~\eqref{q23456789olkmnbvcdse4567uikmnb }, we see that~$m\in L^1(\mathbb S^{N-1})$.

Finally,  by~\eqref{kmjnbhgvcfdxszxfcghvbjnkml,ò.}, we deduce that, for almost every~$(\rho,\theta)\in (0, +\infty)\times\mathbb S^{N-1}$, 
\[
m(\theta)= \frac1{\beta-\alpha}\int_\alpha^\beta G(\rho,\theta)\,ds=G(\rho,\theta).
\]
Recalling~\eqref{òàèù+plkoijhugyftdresdtfygh}, we obtain the desired result.
\end{proof}

\section{A further counterexample}\label{appDanielExample}
In this section, we provide an additional counterexample, shared with us by Daniel E. Spector.

Consider the energy
\[
\mathcal E_A^s(u,v):=\int_{\R^N} A D^s u\cdot D^s v = \mbox{PV}\iint_{\R^{2N}}\widetilde v(x)(\widetilde u(x)-\widetilde u(y))k_A(x,y) dy dx.
\]
The question of whether this energy defines a Dirichlet form is closely related to the regularity theory and to obstacle problems associated with the corresponding operator. Here, the fractional derivative $D^s u$ is defined componentwise by
\[
(D^s u)_j:=\frac{\partial}{\partial x_j} I_{1-s}u,
\]
where $I_s$ denotes the fractional integral, and
\[
k_A(x,y)=\mbox{PV}\int_{\R^N} A(z)\frac{y-z}{|y-z|^{d+s+1}}\cdot\frac{z-x}{|z-x|^{d+s+1}} dz.
\]
Consider now the one-dimensional case, where $A=a$. Then
\[
k_a(x,y) = \mbox{PV}\int_{\R} a(z)\frac{y-z}{|y-z|^{s+2}}\frac{z-x}{|z-x|^{s+2}} dz,
\]
and ellipticity and boundedness correspond to
\[
0<\varepsilon\le a\le C.
\]
If the functions involved in the bilinear pairing are compactly
supported, integrability at infinity does not play a role, and the
endpoint case $s=0$ can be considered as well.

For $s=0$, the partial fraction decomposition
\[
\frac{1}{(y-z)(z-x)} = \frac{1}{y-x}\left(\frac{1}{y-z} + \frac{1}{z-x}\right)
\]
gives
\[
k_a(x,y) = \frac{\pi}{y-x}\left(Ha(y)-Ha(x)\right),
\]
where $H$ denotes the Hilbert transform.

If $k_a$ were nonnegative for every strictly positive coefficient $a$, then, by approximation, the same property would hold for nonnegative coefficients. Consider therefore the limiting choice
\[
a=\chi_{[0,b]}.
\]
Since the Hilbert transform of the characteristic function of an interval can be computed explicitly, one obtains
\[
k_a(x,y) = \frac{1}{y-x}\left(\log\left|\frac{y}{y-b}\right| - \log\left|\frac{x}{x-b}\right|\right).
\]
Let $x<y<0$. For fixed $x<0$,
\[
\lim_{y\to0^-} k_a(x,y)=-\infty.
\]
Thus, the kernel is negative for $y$ sufficiently close to zero. On the other hand, as $y$ approaches $b$, the kernel becomes positive and arbitrarily large in magnitude. Hence, the change of sign cannot be attributed to a sign convention in the computation.
\end{appendix}

\section*{Acknowledgments}
The authors thank Daniel E. Spector for the helpful discussions and insights, and for sharing with them the counterexample presented in Appendix \ref{appDanielExample}.

The authors are members of the Australian Mathematical Society (AustMS). CS
is member of INdAM-GNAMPA.

This work has been supported by the Australian Laureate Fellowship FL190100081 and
the Australian Future Fellowship FT230100333.

CS acknowledges the support of the Juan de la Cierva Fellowship (grant number JDC2023-050365-I).

CS gratefully acknowledges the support of the Australian Laureate Fellowship FL190100081 for her research stay at UWA, where part of this research was carried out.

{\small \def\cdprime{$''$}
}
\vfill

\end{document}